\newcommand{\e}{\boldsymbol{o}}
\newcommand{\X}{\boldsymbol{X}}
\newcommand{\Y}{\boldsymbol{Y}}
\newcommand{\bea}{\begin{eqnarray}}
\newcommand{\eea}{\end{eqnarray}}
\newcommand{\bean}{\begin{eqnarray*}}
\newcommand{\eean}{\end{eqnarray*}}
\newtheorem*{FST}{Furstenberg Structure Theorem}
\newtheorem{Thm}{Theorem}[section]
\newtheorem{cor}[Thm]{Corollary}
\newtheorem{prop}[Thm]{Proposition}
\newtheorem{Lem}[Thm]{Lemma}
\theoremstyle{definition}
\newtheorem{defn}[Thm]{Definition}
\numberwithin{equation}{section}
\journal{xxx}
\begin{document}
\begin{frontmatter}

\title{An extension of Furstenberg's structure theorem for Noetherian modules and multiple recurrence theorems III}

\author{Xiongping Dai}
\ead{xpdai@nju.edu.cn}
\address{Department of Mathematics, Nanjing University, Nanjing 210093, People's Republic of China}

\begin{abstract}
Using a recent Furstenberg structure theorem, we obtain Multiple Recurrence Theorems relative to any locally compact second countable Noetherian module $G$ over a syndetic ring $R$, which generalizes Furstenberg's multiple recurrence theorem. In addition we study the multiple Birkhoff center and the pointwise multiple recurrence of a topological $G$-action on a compact metric space.
\end{abstract}

\begin{keyword}
Furstenberg theory $\cdot$ Multiple recurrence theorem $\cdot$ Noetherian module.

\medskip
\MSC[2010] Primary 37A15\sep 37A45 Secondary 37B20\sep 37P99\sep 22F05
\end{keyword}
\end{frontmatter}


\section*{0. Introduction}\label{sec0}
As a subsequent work of \cite{Dai-pre, Dai-pre2}, this paper will continue to study the Furstenberg multiple recurrence of dynamical systems on a standard Borel probability space induced by any locally compact second countable Noetherian module over a syndetic ring.
\subsection*{0.1. Basic notions}
First of all, by a ``lcscN'' $R$-module $(G,\pmb{+})$ over a ``syndetic'' ring $(R,+,\cdot)$, we mean that $(G,\pmb{+})$ and $(R,+)$ both are locally compact second countable Hausdorff topological groups satisfying the following conditions (cf.~\cite{Dai-pre}):
\begin{itemize}
\item The multiplication operation of $R$ on $G$, $(t,g)\mapsto tg$, is continuous from $R\times G$ to $G$.
\item $G$ is a \textit{Noetherian} $R$-module: for every sequence $G_0\subseteq G_1\subseteq G_2\subseteq\dotsm$ of $R$-submodules of $G$, we have $G_n=G_{n+1}$ as $n$ sufficiently large.
\item $R$ is \textit{syndetic}: $\forall t\not=0$, $Rt$ is syndetic in the sense that one can find a compact subset $K$ of $R$ with $K+Rt=R$.
\end{itemize}
Moreover, we shall say that
\begin{itemize}
\item an $R$-module $G$ is \textit{irreducible} if $tg\not=\e$ whenever $R\ni t\not=0$ and $G\ni g\not=\e$, where $\e$ is the zero element of $G$ and $0$ is the zero element of $R$.
\end{itemize}

Clearly, $(\mathbb{Z}^n,+)$ over $\mathbb{Z}$, $(\mathbb{Q}^n,+)$ over $\mathbb{Q}$ and $(\mathbb{R}^d,+)$ over $\mathbb{R}$ all are irreducible lcscN modules over syndetic rings. Moreover, the $p$-adic integer (syndetic) ring $\mathbb{Z}_p$ and the $p$-adic number (syndetic) field $\mathbb{Q}_p$ are irreducible lcscN as module over themselves. See \cite{Lan} for more examples.

\subsection*{0.2. Measure-theoretic multiple recurrence theorems}
Let $(G,\pmb{+})$ be a topological $R$-module and let $(X,\mathscr{X},\mu)$ be a probability space. We will consider a measure-preserving $G$-action dynamical system:
\begin{gather*}
T\colon G\times X\rightarrow X\quad \textrm{or write}\quad G\curvearrowright_TX
\end{gather*}
where $T_g\colon X\rightarrow X$ is $\mu$-preserving for each $g\in G$ and the $G$-action map $T\colon (g,x)\mapsto T_g(x)$ is jointly measurable. For our convenience, we will write
\begin{gather*}
T_{tg}(x)=T_g^t(x)=g^tx\ \textit{ and so }\ T_{g\pmb{+}h}^t=g^t\circ h^t\quad \forall t\in R\textit{ and }g,h\in G
\end{gather*}
if no confusion. Then given any $g\not=\e$,
\begin{gather*}
T_g\colon R\times X\rightarrow X\quad  \textit{by }\ (t,x)\mapsto T_g^t(x)=g^tx
\end{gather*}
defines a new $\mu$-preserving Borel $R$-system.

Harry Furstenberg in 1977~\cite{F77} extraordinarily extended the classical Poincar\'{e} recurrence theorem to the \textit{multiple recurrence} as follows:
\begin{itemize}
\item For any cyclic measure-preserving system
$(X,\mathscr{X},\mu,T)$ and any integer $l\ge2$, the transformations $T,T^2,\dotsc,T^l$ have a common power satisfying $\mu(A\cap T^{-n}A\cap\dotsm\cap T^{-ln}A)>0$ for any set of positive $\mu$-measure.
\end{itemize}
There he also showed that this result implies Szemer\'{e}di's theorem asserting that any set of integers of positive upper density contains arbitrarily long arithmetic progressions~\cite{Sz}. In 1978~\cite{FW}, he and B.~Weiss proved a topological analogue\,---\,the multiple Birkhoff recurrence theorem:
\begin{itemize}
\item If $T$ is a homeomorphism of a compact metric space $X$, then for any $\varepsilon>0$ and any integer $l=1,2,\dotsc$, there is a point $x\in X$ and a common power $n$ of $T,T^2,\dotsc,T^l$ such that $d(x,T^nx)<\varepsilon, \dotsc, d(x,T^{ln}x)<\varepsilon$.
\end{itemize}
This weaker result in turn implies van der Waerden's theorem on arithmetic progressions for partitions/colorings of the integers. In fact this topological result is true for any $l$ commuting continuous transformations (cf.~\cite[Theorem~2.6]{Fur}).
In 1978~\cite{FK}, he and Y.~Katznelson showed that the measure-theoretic model of the multiple Birkhoff recurrence theorem is also true for arbitrary commuting transformations (cf.~\cite[Theorem~A]{FK} and \cite[Theorems~7.13, 7.14 and 7.15]{Fur}). A subsequent corollary is the multidimensional extension of Szemer\'{e}di's theorem on arithmetic progressions (cf.~\cite[Theorem~B]{FK} and \cite[Theorem~7.16]{Fur}).

Starting with Furstenberg's ergodic-theoretic methods introduced in his landmark papers \cite{F63,F77}, there have already been many important generalizations of the multiple recurrence theorem and Szemer\'{e}di's theorem; see, e.g., \cite{FK85, BL96, Lei, BL03, BHK, FHK, Fra, HK, Zie, Tao, Kra, Aus, CFH, Pot, FHK13, BT, KLOY} and references therein.

By $|\cdot|$ we mean a Haar measure on the lcsc ring $(R,+,\cdot)$. Recall that a sequence of nonull compact subsets $F_n$ of $R$ such that
\begin{equation*}
\lim_{n\to\infty}\frac{|(r+F_n)\vartriangle F_n|}{|F_n|}=0\quad\forall r\in R,
\end{equation*}
is called a \textit{weak F{\o}lner sequence} in $(R,+)$.\footnote{Here and in the future, unlike the case in literature, for a weak F{\o}lner sequence we neither need to require
\begin{equation*}
\lim_{n\to\infty}\frac{|(K+F_n)\vartriangle F_n|}{|F_n|}=0\quad\forall K\subset R\textit{ compact}
\end{equation*}
nor other regularities such as Tempelman condition~\cite{Tem} or Shulman condition (cf.~\cite{EW}).} Particularly, a weak F{\o}lner sequence $\{F_n\}_1^\infty$ in $(R,+)$ is said to be \textbf{\textit{asymptotic}} if for each $m\in\mathbb{N}$ there exists a constant $c_m>0$ such that
\begin{equation*}
mF_n\subseteq F_{mn}\quad\textit{and}\quad |F_{mn}|\le c_m|F_n| \quad \forall n\ge1.\eqno{(AF)}
\end{equation*}
For example, $F_n=\{0,1,2,\dotsc,n-1\}, n=1,2,\dotsc$ is an asymptotic F{\o}lner sequence in $(\mathbb{Z},+)$; $F_n=[0,n], n=1,2,\dotsc$ is an asymptotic F{\o}lner sequence in $(\mathbb{R},+)$ with the Euclidean topology.

In different flavor, this paper is mainly to prove the following measure-theoretic multiple recurrence theorem respecting to an irreducible lcscN module such as $\mathbb{Q}^n, \mathbb{Z}_p, \mathbb{Q}_p^n, \mathbb{R}^n$ beyond the integer group $\mathbb{Z}$.

\begin{Thm}\label{thm0.1}
Let $G$ be an irreducible lcscN $R$-module over a syndetic ring $(R,+,\cdot)$. Then every $G\curvearrowright_T(X,\mathscr{X},\mu)$ is an \textit{Sz-system} over asymptotic F{\o}lner sequences; i.e., for any
$A\in\mathscr{X}$ with $\mu(A)>0$ and any $g_1,\dotsc,g_l\in G, l\ge2$,
\begin{gather*}
\liminf_{n\to+\infty}\frac{1}{|F_n|}\int_{F_n}\int_Xg_1^t1_A\dotsm g_l^t1_A\,d\mu dt>0,
\end{gather*}
over any asymptotic F{\o}lner sequence $\{F_n\}_{1}^\infty$ in $(R,+)$.
\end{Thm}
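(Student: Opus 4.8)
The plan is to follow Furstenberg's ergodic-theoretic strategy, now driven by the Furstenberg Structure Theorem available for lcscN $R$-modules, proving the positivity by transfinite induction along the resulting tower of factors. Call a factor $Y$ of $X$ an \emph{Sz-factor} if the displayed liminf is strictly positive for every $A\in\mathscr{X}$ of positive measure that is $Y$-measurable and every choice of $g_1,\dotsc,g_l$. Since the Noetherian hypothesis makes the closed $R$-submodule $H$ generated by $g_1,\dotsc,g_l$ finitely generated, and $H$ is again an irreducible lcscN module, I would first restrict attention to the $H$-action, so that the structure theorem is applied to a finitely generated module action. The Furstenberg Structure Theorem then furnishes a transfinite chain of factors
\[ \{*\}=X_0\leftarrow X_1\leftarrow\dotsm\leftarrow X_\eta\leftarrow\dotsm\leftarrow X_\theta=X \]
in which every successor extension $X_{\eta+1}\to X_\eta$ is either relatively weakly mixing or relatively isometric (compact), and every limit stage is an inverse limit. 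The one-point factor $X_0$ is trivially an Sz-factor, so it remains to propagate the property through the three kinds of steps.

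For a relatively weakly mixing extension $X_{\eta+1}\to X_\eta$ I would run a van der Corput argument adapted to F{\o}lner averaging over $(R,+)$. Relative weak mixing forces the contribution of the part of $1_A$ orthogonal to $L^2(X_\eta)$ to average to zero along $\{F_n\}$, so that the multiple correlation over $X_{\eta+1}$ differs from the corresponding correlation over $X_\eta$ only by a term that vanishes in the F{\o}lner mean; positivity on $X_\eta$ therefore descends to $X_{\eta+1}$. The one point requiring care is that the van der Corput inequality be formulated purely in terms of the weak F{\o}lner property, avoiding Tempelman or Shulman regularity, which is precisely the generality the footnote singles out.

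The decisive step is the relatively isometric extension $X_{\eta+1}\to X_\eta$. Here one exploits that the fibers carry an isometric structure, so the orbit maps $t\mapsto(g_1^tx,\dotsc,g_l^tx)$ return almost periodically and the set of $\varepsilon$-return times is syndetic in $(R,+)$. To convert a single such return into a simultaneous return for all $l$ directions with a \emph{uniform} positive density, I would invoke the multiplicative structure of the asymptotic F{\o}lner sequence: the inclusions $mF_n\subseteq F_{mn}$ together with the volume comparison $|F_{mn}|\le c_m|F_n|$ let one compare the average at scale $n$ with that at scale $mn$ and run a van der Waerden type combinatorial argument inside the compact fiber, yielding a lower bound for the $X_{\eta+1}$-average that is a fixed positive multiple of the $X_\eta$-average. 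Irreducibility enters exactly here, guaranteeing $tg_i\not=\e$ for $t\not=0$ so that the $l$ directions genuinely move and the fiberwise combinatorics is nondegenerate.

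Finally, at a limit stage I would approximate $A$ in $L^2$ by a set measurable with respect to some earlier finite stage; since the multiple correlations depend continuously on their arguments and the lower bounds already obtained are uniform, the Sz property passes to the inverse limit. Transfinite induction then gives the property for $X=X_\theta$, proving the theorem. I expect the isometric-extension step to be the main obstacle: showing that the return times are syndetic in an arbitrary syndetic ring $(R,+)$, and above all extracting a scale-uniform positive density of simultaneous returns, is where the asymptotic condition $(AF)$ must be used in an essential way, and it is the part of Furstenberg's integer argument that does not transcribe mechanically to a general module over a syndetic ring.
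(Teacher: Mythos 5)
Your scaffolding (transfinite induction along the structure theorem, with a weak-mixing step, a compact step, and a limit step) is the paper's, and you correctly locate where the asymptotic condition $(AF)$ must act: in comparing averages at scales $n$ and $mn$ to extract positive lower density for the dilated return-time set. But there is a genuine gap at the core of your plan: you state the structure theorem as producing successor extensions that are \emph{either} relatively weakly mixing \emph{or} relatively compact. That dichotomy is the single-transformation ($\mathbb{Z}$-action) statement; for module actions --- already for $\mathbb{Z}^d$, i.e.\ commuting transformations --- it fails, and this failure is exactly why Furstenberg and Katznelson introduced \emph{primitive} extensions. The structure theorem the paper invokes yields primitive extensions $\pi_{\xi+1,\xi}$: a direct sum $G=G_{rc}\oplus G_{rw}$ of $R$-submodules such that the extension is relatively compact for $G_{rc}$ and totally relatively weak-mixing for $G_{rw}$ \emph{simultaneously}. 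Consequently your induction never treats the case that actually occurs, namely $g_i=h_i\pmb{+}f_i$ with nonzero components $h_i\in G_{rc}$ and $f_i\in G_{rw}$: your van der Corput step assumes all directions are weak-mixing relative to the factor, your isometric step assumes they are all compact relative to it, and neither applies to such $g_i$.

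Closing this gap is the content of the paper's Proposition 3.5, and it is not a mechanical merge of your two steps. One colors $Y\times G_{rc}$ using FK almost periodicity of $1_A$ (Lemma 3.4), applies a van der Waerden theorem for semimodules (Corollary 3.3) to the colorings $k_r(y,\cdot)$ on $G=G_{rc}\oplus G_{rw}$ to get finitely many candidate pairs $(m,g')$, controls the $f_i$-directions \emph{fiberwise over the base} by the relative weak-mixing estimate (Corollary 2.4), and --- crucially --- applies the Sz-property of the base not to $B$ itself but to the finite intersections $C_r=\bigcap_{i,m,g'}\left(f_i^m\circ g'\right)^{-r}B$, so that the almost-periodic returns in the $G_{rc}$-directions and the weak-mixing returns in the $G_{rw}$-directions occur at the same times $t=m(r)r$; only then does the $(AF)$ computation you anticipated give positive lower density of $P=\{m(r)r\colon r\in P'\}$. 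Two smaller points: your preliminary reduction to the submodule generated by $g_1,\dotsc,g_l$ is unnecessary and unused; and at limit stages your appeal to ``uniform lower bounds'' is circular as stated, since the Sz constant of an approximating set depends on the approximation --- the paper resolves this by approximating with an \emph{increasing} sequence $A_i$ and using the quantitative reformulation of the Sz-property (Lemma 1.2: a positive-lower-density set of times on which the correlation is uniformly bounded below), which is monotone under $A_{i_0}\subseteq A_i$.
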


The $\liminf\limits_{n\to+\infty}$ is actually a limit in the classical case that $(R,+,\cdot)=(\mathbb{Z},+,\cdot)$ and $G=\mathbb{Z}^n$; see \cite{HK, Zie, Tao, Aus}. However, this question is open in the present context.

We can obtain from Theorem~\ref{thm0.1} the continuous-time version of Furstenberg's Multiple Recurrence Theorem~\cite{F77, FK, Fur} as follows:

\begin{cor}\label{cor0.2}
Let $\varphi\colon\mathbb{R}^d\times(X,\mathscr{X},\mu)\rightarrow(X,\mathscr{X},\mu)$ be a Borel flow over a standard Borel probability space $(X,\mathscr{X},\mu)$. Then for $A\in\mathscr{X}$ with $\mu(A)>0$,
\begin{gather*}
\lim_{T\to+\infty}\frac{1}{T}\int_0^T\mu\left(A\cap g_1^{-t}A\cap\dotsm\cap g_l^{-t}A\right)dt>0
\end{gather*}
for any nonzero elements $g_1,\dotsc,g_l\in \mathbb{R}^d$ and $l\ge1$.
\end{cor}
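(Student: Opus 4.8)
The plan is to deduce Corollary~\ref{cor0.2} directly from Theorem~\ref{thm0.1}, by specializing the module and the averaging sequence and then upgrading the discrete F{\o}lner average to a continuous-time Ces\`aro average. First I would recall from \S0.1 that $(\mathbb{R}^d,\pmb{+})$ is an irreducible lcscN module over the syndetic ring $(\mathbb{R},+,\cdot)$, with scalar multiplication $(t,g)\mapsto tg$ as its $R$-action. The Borel flow $\varphi$ is a jointly measurable $\mu$-preserving $G$-action, so putting $T_g=\varphi(g,\cdot)$ and $g^tx=\varphi(tg,x)$ turns it into a system $G\curvearrowright_T(X,\mathscr{X},\mu)$ to which Theorem~\ref{thm0.1} applies. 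Under the Koopman convention $g^t1_A=1_A\circ g^t$ one computes $g^t1_A=1_{g^{-t}A}$, where $g^{-t}A=\varphi(-tg,A)$ matches the notation of the corollary.

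Next I would reconcile the extra factor $A$ and the index ranges. Since $t\e=\e$ gives $\e^t=T_\e=\mathrm{id}_X$, hence $\e^t1_A=1_A$, I apply Theorem~\ref{thm0.1} to the $l+1\ge2$ elements $\e,g_1,\dotsc,g_l$; this covers the corollary's range $l\ge1$ and supplies the leading $1_A$. Because
\begin{gather*}
\int_X\e^t1_A\cdot g_1^t1_A\dotsm g_l^t1_A\,d\mu=\mu\!\left(A\cap g_1^{-t}A\cap\dotsm\cap g_l^{-t}A\right)=:\psi(t),
\end{gather*}
Theorem~\ref{thm0.1} yields $\liminf_{n\to+\infty}\frac1{|F_n|}\int_{F_n}\psi(t)\,dt>0$ for every asymptotic F{\o}lner sequence. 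Choosing $F_n=[0,n]$, the asymptotic F{\o}lner sequence of $(\mathbb{R},+)$ singled out in \S0.1, for which $|F_n|=n$, this becomes $\liminf_{n\to+\infty}\frac1n\int_0^n\psi(t)\,dt>0$.

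Finally I would pass from the integer $n$ to the continuous parameter $T$. As $\psi$ is measurable, nonnegative, and bounded above by $\mu(A)\le1$, writing $n=\lfloor T\rfloor$ gives $\bigl|\frac1T\int_0^T\psi-\frac1n\int_0^n\psi\bigr|\le\frac{2\mu(A)}{T}\to0$ as $T\to+\infty$; since $\frac1n\int_0^n\psi$ is constant for $T\in[n,n+1)$, it follows that $\liminf_{T\to+\infty}\frac1T\int_0^T\psi(t)\,dt=\liminf_{n\to+\infty}\frac1n\int_0^n\psi(t)\,dt>0$, which is the asserted positivity (the $\lim$ in the statement being read as a $\liminf$, since genuine convergence is open by the remark following Theorem~\ref{thm0.1}). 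All the real content sits in Theorem~\ref{thm0.1}; the only steps needing attention are the bookkeeping of inserting $\e$ to align the corollary's $l\ge1$ with the theorem's $l\ge2$, and the passage between the discrete and continuous averages, which is harmless precisely because $\psi$ is bounded and nonnegative.
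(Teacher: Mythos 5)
Your proposal is correct and is essentially the paper's own (implicit) derivation: the paper presents Corollary~\ref{cor0.2} as an immediate specialization of Theorem~\ref{thm0.1} to $G=\mathbb{R}^d$ viewed as an irreducible lcscN module over the syndetic ring $(\mathbb{R},+,\cdot)$ with the asymptotic F{\o}lner sequence $F_n=[0,n]$, which is exactly your argument, and your bookkeeping (inserting the zero element $\e$ to supply the leading $A$ and to convert the corollary's $l\ge1$ into the theorem's $l+1\ge2$, then interpolating from integer $n$ to real $T$ using $0\le\psi\le\mu(A)$) is sound. Your reading of the $\lim$ as a $\liminf$ is also the appropriate one here, since the paper offers no convergence argument for this average and itself remarks after Theorem~\ref{thm0.1} that the existence of the limit is open in the present context.
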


Our condition $(AF)$ is similar to Calderon's original doubling condition~\cite{Cal} that is formulated for an increasing family of compact symmetric neighborhoods of the zero $0$ of $R$. We note that this condition is often incompatible with the weak F{\o}lner property, as noted in \cite{GE}. In view of this reason, we now restate our theorem as follows:

\begin{Thm}\label{thm0.3}
Let $G$ be any lcscN $R$-module over a syndetic ring $(R,+,\cdot)$. Then every measure-preserving $G$-system $G\curvearrowright_T(X,\mathscr{X},\mu)$ satisfies the following multiple recurrence property:
\begin{itemize}
\item For any
$A\in\mathscr{X}$ with $\mu(A)>0$ and any finite set $F=\{g_1,\dotsc,g_l\}\subseteq G$, there exists some integer $M=M(A,F)\ge1$ such that for any weak F{\o}lner sequence $\{F_n\}_{1}^\infty$ in $(R,+)$,
\begin{gather*}
\limsup_{n\to+\infty}\frac{1}{|F_n|}\int_{F_n}\int_Xg_1^{mt}1_A\dotsm g_l^{mt}1_A\,d\mu dt>0,
\end{gather*}
for some integer $m$ with $1\le m\le M$.
\end{itemize}
\end{Thm}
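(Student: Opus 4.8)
The plan is to deduce Theorem~\ref{thm0.3} from Theorem~\ref{thm0.1} by means of two successive reductions, each of which is responsible for one of the integer multipliers that are eventually absorbed into the single bound $M=M(A,F)$. The first reduction passes from an arbitrary lcscN module to an irreducible one; the second, which carries the genuine content, passes from the asymptotic Følner sequences demanded by the $(AF)$ hypothesis to an arbitrary weak Følner sequence. Since neither reduction respects the clean nesting $mF_n\subseteq F_{mn}$, I would work throughout with $\limsup$ rather than $\liminf$ and allow myself to test a whole range $1\le m\le M$ of dilations, exactly as the statement permits. Throughout I use the identity $g_i^{mt}=(mg_i)^t$, so that inserting the multiplier $m$ is the same as replacing each generator $g_i$ by $mg_i$.

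For the first reduction, let $H$ be the closed $R$-submodule of $G$ generated by the finite set $F=\{g_1,\dots,g_l\}$. Since $G$ is Noetherian, $H$ is finitely generated and its torsion part is annihilated by a nonzero element of $R$; combining this with the syndeticity of $R$ (so that the relevant annihilator ideal is large enough to meet the image of $\mathbb{N}$ in $R$) yields a positive integer, which will be one factor of $M$, and a dilation $m$ within that range for which $mg_1,\dots,mg_l$ all lie in a torsion-free, hence irreducible, closed submodule $H_0\subseteq G$. Restricting the measure-preserving $G$-action to $H_0$ gives a measure-preserving action of the irreducible lcscN module $H_0$, and the integrand $g_1^{mt}1_A\cdots g_l^{mt}1_A=(mg_1)^t1_A\cdots(mg_l)^t1_A$ is then precisely the one to which Theorem~\ref{thm0.1} applies. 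Hence it suffices to prove the weak-Følner statement for an irreducible module, with the generators already chosen inside it.

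For the second reduction I would exploit the change of variables $\frac{1}{|F_n|}\int_{F_n}\psi(mt)\,dt=\frac{1}{|mF_n|}\int_{mF_n}\psi(s)\,ds$, where $\psi(s)=\int_X g_1^s1_A\cdots g_l^s1_A\,d\mu\ge0$ is the multicorrelation of the given generators; this rewrites the average appearing in the theorem as the average of the single fixed function $\psi$ over the dilated windows $mF_n$. Applied in the irreducible module to the generators $mg_1,\dots,mg_l$ along any asymptotic Følner sequence, Theorem~\ref{thm0.1} shows that $t\mapsto\psi(mt)$ has strictly positive lower average density. The task is then to transfer this positivity to $\limsup_n\frac{1}{|mF_n|}\int_{mF_n}\psi$ for the given weak Følner sequence and some $m\le M$. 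I would route this through the structured part of $\psi$: via the Furstenberg tower the correlation splits as a uniformly almost periodic function on $R$ plus a part that is null in density, and a uniformly almost periodic function has a Følner-independent mean, so that its average over any weak Følner sequence equals the value already shown to be positive. The finitely many eigencharacters of $R$ entering the almost periodic part may have finite order; recurrence for them occurs only along a subgroup commensurable with $mR$, which is exactly why an explicit integer dilation must be reintroduced once the nesting of $(AF)$ is unavailable, and why it suffices to test $1\le m\le M$.

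The hard part will be this second transfer. Under the $(AF)$ hypothesis the scales $t$ and $mt$ are matched by reindexing the Følner family, and this matching is precisely what drives the induction behind Theorem~\ref{thm0.1}; for a general weak Følner sequence the two scales decouple, so I must instead establish positivity of $\limsup_n\frac{1}{|mF_n|}\int_{mF_n}\psi$ from the internal almost periodic structure alone, and show that a single finite $M$ simultaneously controls all the relevant finite-order eigencharacters. It is here that the Noetherian hypothesis on $G$ (bounding the complexity of the distal tower attached to the finitely generated submodule $H$) and the syndeticity of $R$ (guaranteeing the required syndetic sets of return times) are indispensable; a secondary technical point is to justify the torsion-annihilation step of the first reduction over a syndetic ring that need not be a principal ideal domain.
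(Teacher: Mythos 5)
There is a genuine gap, and it sits exactly where you predicted: the ``second transfer.'' Your argument reduces everything to the claim that the multicorrelation $\psi(t)=\int_X g_1^{t}1_A\dotsm g_l^{t}1_A\,d\mu$ splits as a uniformly almost periodic function on $R$ plus a term that is null in density, so that its mean is F{\o}lner-independent. No such decomposition is available in this setting, and it is false as stated even in the classical one: for $G=\mathbb{Z}$ and $l\ge2$ the structured part of a multicorrelation sequence is a ($2$-step or higher) nilsequence (Bergelson--Host--Kra), not a Bohr almost periodic function; for several commuting generators only approximate versions are known, and for lcscN modules over syndetic rings nothing of the sort exists --- indeed the paper remarks that even replacing $\liminf$ by $\lim$ in Theorem~\ref{thm0.1} is open here. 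Without structural control on $\psi$, positivity of its averages along every asymptotic F{\o}lner sequence says nothing about an arbitrary weak F{\o}lner sequence: a nonnegative function can have positive mean along one F{\o}lner family and mean zero along a disjoint one, and the paper cites precisely the incompatibility of condition $(AF)$ with the weak F{\o}lner property as the reason Theorem~\ref{thm0.3} is stated separately. (Note also that after your change of variables the sets $mF_n$ are in general not F{\o}lner at all --- already in $\mathbb{Z}$, $m F_n$ fails the F{\o}lner property for shifts by $1$ when $m\ge2$ --- so no averaging argument over $\{mF_n\}$ can substitute for the missing structure.) A secondary but real problem is your first reduction: the annihilator of the torsion part of the submodule generated by $F$ is an ideal of $R$, and neither the Noetherian hypothesis nor syndeticity forces it to meet $\mathbb{N}\cdot 1_R$, so the existence of an integer $m\le M$ with $mg_1,\dotsc,mg_l$ in a torsion-free closed submodule is unjustified.

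The paper's proof is structured quite differently: Theorem~\ref{thm0.3} is not deduced from Theorem~\ref{thm0.1} at all. Instead, the weak \textit{Sz}-property (with the F{\o}lner-independent bound $M$) is itself taken as the inductive statement and is pushed up the Furstenberg factors chain by transfinite induction, using Lemma~\ref{lem1.5} at limit ordinals, Corollary~\ref{cor2.3} for totally relatively weak-mixing extensions, and --- in place of Proposition~\ref{prop3.5} --- Proposition~\ref{prop3.6} for primitive extensions. It is inside Proposition~\ref{prop3.6} that the integer multiplier is generated and propagated: the van der Waerden coloring argument (Corollary~\ref{cor3.3}) produces return times of the form $km't$ with $1\le km'\le M\cdot N$, where $N$ is the bound inherited from the factor and $M$ comes from the finite coloring data; this product is independent of the chosen weak F{\o}lner sequence, which is exactly what allows $\limsup$-positivity to survive each primitive step without any $(AF)$-type rescaling. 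Non-irreducibility of $G$ is handled there by working with irreducibility \emph{relative to} the finite set $F$ (torsion generators are absorbed into the multiplier), not by locating a torsion-free submodule. If you want to salvage your outline, the viable move is to abandon the reduction to Theorem~\ref{thm0.1} and instead verify that your multiplier bookkeeping reproduces the inductive statement of Proposition~\ref{prop3.6}.
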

For simplicity, we shall call the property described in Theorem~\ref{thm0.3} the \textbf{\textit{weak \textit{Sz}-property}}. Following Furstenberg's correspondence principle, this weak \textit{Sz}-property is already enough to derive Szemer\'{e}di's theorem.

\subparagraph{Outline of the proof of Theorems~\ref{thm0.1} and \ref{thm0.3}}~\label{}
First we note that because $\{1,r_1t,\dotsc,r_lt\}$ as a family of polynomials, for distinct $r_1,\dotsc,r_l\in\mathbb{R}$, is not $\mathbb{R}$-independent for $l\ge2$, even the special case of $d=1$ in Corollary~\ref{cor0.2} above is not included in \cite[Theorem~1.2]{Pot} by using nilflows.

On the other hand, since an irreducible lcscN ring $(R,+,\cdot)$ does not need to be the classic field $\mathbb{R}$ and so there is no the discrete-time expression: given any $\delta\not=0$, for any $t\in R$ we have $t=n_t\delta+r_t$ with $n_t\in\mathbb{Z}$ and $0\le r_t<\delta$, hence the discretization methods developed recently by Bergelson et al. \cite{BLM} does not work for Theorem~\ref{thm0.1} here.

To prove our Multiple Recurrence Theorems in the probabilistic settings (Theorems~\ref{thm0.1} and \ref{thm0.3}), our main tool is the following structure theorem.

\begin{FST}[\cite{Dai-pre}]
Let $G$ be an lcscN $R$-module over a syndetic ring $R$ and $\X=(X,\mathscr{X},\mu,G)$ a nontrivial standard Borel $G$-system. Then there exists an ordinal $\eta$ and a system of factors $\left\{\pi_\xi\colon\X\rightarrow\X_\xi\right\}_{\xi\le\eta}$
such that
\begin{enumerate}
\item[$(a)$] $\X_0$ is the one-point $G$-system and $\X_\eta=\X$ ($\mu$-$\mathrm{mod}$ $0$).
\item[$(b)$] If $0\le\theta<\xi\le\eta$, then there is a factor $G$-map $\pi_{\xi,\theta}\colon\X_\xi\rightarrow\X_\theta$ with $\pi_\theta=\pi_{\xi,\theta}\circ\pi_\xi$.
\item[$(c)$] For each ordinal $\xi$ with $0\le\xi<\eta$, $\pi_{\xi+1,\xi}\colon\X_{\xi+1}\rightarrow\X_\xi$ is a nontrivial ``primitive'' extension.
\item[$(d)$] If $\xi$ is a limit ordinal $\le\eta$, then $\X_\xi=\underleftarrow{\lim}_{\theta<\xi}\X_\theta$.
\end{enumerate}
Moreover, the intermediate factors are of the form
\begin{gather*}
\X_\xi=(X,\mathscr{X}_\xi,\mu,G),\quad \pi_\xi=\textit{Id}_X\quad \textit{and}\quad \pi_{\xi+1,\xi}=\textit{Id}_X\quad (0<\xi<\eta).
\end{gather*}
Here we refer to
\begin{gather*}
\X\rightarrow\X_\eta\rightarrow\dotsm\rightarrow\X_{\xi+1}\rightarrow\X_\xi\rightarrow\dotsm\rightarrow\X_1\rightarrow\X_0
\end{gather*}
a ``Furstenberg factors chain'' of $\X$.
\end{FST}

Since $\X_0$ is an \textit{Sz}-system (and $\X_1$ is a \textit{Kh}-system \cite{Dai-pre2}), then as in the classical case, based on this Structure Theorem we mainly need to build three ladders to lift the \textit{Sz}-property of $\X_1$:

\begin{enumerate}
\item[(1).] For any limit factor $\X_\xi$ as in $(d)$, if each $\X_\theta, \theta<\xi$, is an (resp.~a weak) \textit{Sz}-system, $\X_\xi$ is also an (resp.~a weak) \textit{Sz}-system (Corollary~\ref{cor1.4} and Lemma~\ref{lem1.5} in $\S\ref{sec2}$).

\item[(2).] If an intermediate factor $\X_{\xi+1}=(X,\mathscr{X}_{\xi+1},\mu,G)$ of $\X$ is a totally relatively weak-mixing extension of an (resp.~a weak) \textit{Sz}-system $\X_\xi$, then $\X_{\xi+1}$ is an (resp.~a weak) \textit{Sz}-system; see Corollary~\ref{cor2.3} in $\S\ref{sec2}$.

\item[(3).] Consider any primitive link $\pi_{\xi+1,\xi}\colon\X_{\xi+1}\rightarrow\X_\xi$ for any order $\xi\ge1$, where $\X_\xi$ is an (resp.~a weak) \textit{Sz}-system. See Propositions~\ref{prop3.5} and \ref{prop3.6} in $\S\ref{sec3}$.
\end{enumerate}
Then we can easily prove our Multiple Recurrence Theorems by combining the above three ladders; see $\S\ref{sec4}$.
\subsection*{0.3. Topological multiple recurrence}
Next we will present some simple applications of Theorems~\ref{thm0.1} and \ref{thm0.3}.
Clearly Theorem~\ref{thm0.1} implies the following Topological Multiple Recurrence Theorem, which is an extension and strengthening of Furstenberg and Weiss~\cite[Theorem~1.5]{FW}:

\begin{prop}\label{prop0.4}
Let $T\colon G\times X\rightarrow X$ be a topological dynamical system on a compact metric space $X$, where $G$ is an irreducible lcscN module over a syndetic ring $R$. If $G\curvearrowright_TX$ is a \textsl{weak E-system} (i.e. $X$ is just the support of some $T$-invariant Borel probability measure of $X$ but $(X,T)$ does not need to be topologically transitive), then for any $g_1,\dotsc,g_l\in G$ and any nonempty open subset $U$ of $X$,
\begin{equation*}
N_{g_1,\dotsc,g_l}(U)=\left\{t\in R\,|\, U\cap g_1^{-t}U\cap\dotsm\cap g_l^{-t}U\not=\varnothing\right\}
\end{equation*}
is of positive lower density over any asymptotic F{\o}lner sequence in $(R,+)$.
\end{prop}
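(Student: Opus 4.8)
The plan is to read Proposition~\ref{prop0.4} off from the measure-theoretic Theorem~\ref{thm0.1}: the weak E-system hypothesis supplies precisely the invariant measure needed to run that theorem, and the topological nonemptiness conclusion then follows softly from a strictly positive Ces\`aro average. Concretely, I would first fix a $T$-invariant Borel probability measure $\mu$ on $X$ with $X=\mathrm{supp}\,\mu$, as granted by the weak E-system assumption. Since $U$ is nonempty and open while $X$ is the support of $\mu$, we have $\mu(U)>0$. I then set
\[
f(t)=\mu\!\left(U\cap g_1^{-t}U\cap\dots\cap g_l^{-t}U\right)=\int_X 1_U\cdot g_1^t1_U\cdots g_l^t1_U\,d\mu,\qquad t\in R,
\]
which, by joint measurability of the $G$-action together with Fubini's theorem, is a Borel function on $R$ with $0\le f\le 1$; in particular $\{t\in R\,|\,f(t)>0\}$ is Haar-measurable.

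The key step is to apply Theorem~\ref{thm0.1} to the $\mu$-preserving system $G\curvearrowright_T(X,\mathscr{X},\mu)$ with $A=U$ and with the enlarged finite family $\{\e,g_1,\dots,g_l\}$, where the zero element $\e$ is prepended so that the identity factor $\e^t1_U=1_U$ reproduces the set $U$ itself inside the intersection. Because $\mu(U)>0$ and this family has at least two members, Theorem~\ref{thm0.1} furnishes a constant
\[
c:=\liminf_{n\to+\infty}\frac{1}{|F_n|}\int_{F_n}f(t)\,dt>0
\]
over the given asymptotic F{\o}lner sequence $\{F_n\}_1^\infty$ in $(R,+)$.

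It remains to convert this positive average into the asserted density. Since $0\le f\le 1$,
\[
\frac{1}{|F_n|}\int_{F_n}f(t)\,dt\le\frac{1}{|F_n|}\int_{F_n\cap\{f>0\}}1\,dt=\frac{\bigl|F_n\cap\{t\,|\,f(t)>0\}\bigr|}{|F_n|},
\]
so passing to $\liminf_{n\to+\infty}$ gives lower density at least $c>0$ for $\{t\,|\,f(t)>0\}$. As $f(t)>0$ forces $U\cap g_1^{-t}U\cap\dots\cap g_l^{-t}U\ne\varnothing$, we have the inclusion $\{t\,|\,f(t)>0\}\subseteq N_{g_1,\dots,g_l}(U)$, whence $N_{g_1,\dots,g_l}(U)$ has positive lower density over every asymptotic F{\o}lner sequence. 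Given Theorem~\ref{thm0.1}, no real obstacle remains; the only points demanding care are the Borel measurability of $f$ (so that the lower density is even well defined) and the bookkeeping of the identity factor $\e$ retaining $U$ in the intersection, after which the positivity is automatic from the uniform bound $f\le1$.
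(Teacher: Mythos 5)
Your proposal is correct and is exactly the deduction the paper intends: the paper offers no separate proof of Proposition~\ref{prop0.4}, asserting it follows ``clearly'' from Theorem~\ref{thm0.1}, and your writeup supplies precisely that argument (full-support invariant measure gives $\mu(U)>0$, apply Theorem~\ref{thm0.1} to the family enlarged by $\e$, then bound the Ces\`aro average by the density of $\{t\,|\,f(t)>0\}\subseteq N_{g_1,\dotsc,g_l}(U)$ using $f\le 1$). Your two flagged points of care are also handled correctly, since Theorem~\ref{thm0.1} permits arbitrary elements of $G$, including $\e$ and repetitions, as Lemma~\ref{lem2.2} and Corollary~\ref{cor2.3} make explicit.
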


For the classical case where $G=\mathbb{Z}^d$, regarded as an lcscN $\mathbb{Z}$-module, acts minimally on $X$ and only requiring $N_{g_1,\dotsc,g_l}(U)\cap\mathbb{N}\not=\varnothing$, see \cite{FW, Fur} by using Bowen's lemma and homogeneous sets, \cite{BFHK} also~\cite[Theorem~1.56]{Gla} by using Ellis enveloping semigroup theory, and \cite{BPT} for topological proof of the topological multiple recurrence theorem.

Our further applications need the following notation.

\begin{defn}\label{def0.5}
A measurable subset $S$ of an lcsc ring $(R,+,\cdot)$ is called a \textit{F{\o}lner $\infty$-set} in $R$ if for any weak F{\o}lner sequence $\{F_n\}_1^\infty$ in $(R,+)$, there is some integer $m\ge1$ such that
the set $\{t\in R; mt\in S\}$ is of positive upper density over $\{F_n\}_1^\infty$.
\end{defn}
Then Theorem~\ref{thm0.3} implies the following

\begin{prop}\label{prop0.6}
Let $T\colon G\times X\rightarrow X$ be a topological dynamical system on a compact metric space $X$, where $G$ is an lcscN module over a syndetic ring $R$. If $G\curvearrowright_TX$ is a \textsl{weak E-system}, then for any sample elements $g_1,\dotsc,g_l\in G$ and any nonempty open subset $U$ of $X$,
$N_{g_1,\dotsc,g_l}(U)$ is a F{\o}lner $\infty$-set in $R$.
\end{prop}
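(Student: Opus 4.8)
The plan is to deduce Proposition~\ref{prop0.6} directly from the weak \textit{Sz}-property furnished by Theorem~\ref{thm0.3}, exactly as Proposition~\ref{prop0.4} is read off from Theorem~\ref{thm0.1}; the only genuine work is to pass from the measure-theoretic positivity to a set of return times of positive upper density. First I would record the single structural consequence of the weak E-system hypothesis: by definition $X=\mathrm{supp}(\mu)$ for some $T$-invariant Borel probability measure $\mu$, so every nonempty open set $U\subseteq X$ satisfies $\mu(U)>0$. Hence $U$ is an admissible target set for Theorem~\ref{thm0.3}.

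Next I would apply Theorem~\ref{thm0.3} not to $\{g_1,\dots,g_l\}$ but to the enlarged finite family $F=\{\e,g_1,\dots,g_l\}\subseteq G$, with $A=U$. Since $\e$ is the zero of $G$ we have $T_\e=\mathit{Id}_X$, so $\e^{mt}1_U=1_U$ for every $t$ and every $m$, and the integrand collapses to
\[
\int_X \e^{mt}1_U\,g_1^{mt}1_U\dotsm g_l^{mt}1_U\,d\mu=\mu\big(U\cap g_1^{-mt}U\cap\dotsm\cap g_l^{-mt}U\big).
\]
Appending $\e$ in this way is precisely what reinstates the factor $U$ that appears in $N_{g_1,\dots,g_l}(U)$. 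Theorem~\ref{thm0.3} then produces an integer $M=M(U,F)\ge1$ such that for every weak F{\o}lner sequence $\{F_n\}_1^\infty$ in $(R,+)$ there is some $m\in\{1,\dots,M\}$ with
\[
\limsup_{n\to+\infty}\frac{1}{|F_n|}\int_{F_n}\mu\big(U\cap g_1^{-mt}U\cap\dotsm\cap g_l^{-mt}U\big)\,dt>0.
\]

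Finally I would convert this into the F{\o}lner $\infty$-set conclusion. Write $\phi_m(t)=\mu(U\cap g_1^{-mt}U\cap\dotsm\cap g_l^{-mt}U)$ and $E_m=\{t\in R:\phi_m(t)>0\}$. Because $0\le\phi_m\le1$ and $\phi_m$ vanishes off $E_m$, one has $\frac{1}{|F_n|}\int_{F_n}\phi_m\,dt\le |F_n\cap E_m|/|F_n|$, so taking $\limsup_n$ shows that $E_m$ has positive upper density over $\{F_n\}$. On the other hand $\phi_m(t)>0$ forces $U\cap g_1^{-mt}U\cap\dotsm\cap g_l^{-mt}U\ne\varnothing$, i.e.\ $mt\in N_{g_1,\dots,g_l}(U)$; thus $E_m\subseteq\{t\in R:mt\in N_{g_1,\dots,g_l}(U)\}$, and the latter set therefore has positive upper density over $\{F_n\}$ as well. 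As the weak F{\o}lner sequence was arbitrary and $m\ge1$, this is exactly the assertion that $N_{g_1,\dots,g_l}(U)$ is a F{\o}lner $\infty$-set.

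I expect no serious obstacle: this is the \emph{weak} analogue of Proposition~\ref{prop0.4}, with Theorem~\ref{thm0.3} in place of Theorem~\ref{thm0.1}, and the crux is merely the implication ``positive measure $\Rightarrow$ nonempty,'' which descends the positivity along the inclusion $E_m\subseteq\{t:mt\in N_{g_1,\dots,g_l}(U)\}$. The points demanding attention are bookkeeping ones: adjoining $\e$ so that the $U$-factor is present, and checking measurability so that the densities are meaningful. The latter is automatic, since continuity of $(t,x)\mapsto g_i^tx$ makes $\{(t,x):x\in U,\ g_i^tx\in U\ \forall i\}$ open in $R\times X$, whence its projection $N_{g_1,\dots,g_l}(U)$, and then $\{t:mt\in N_{g_1,\dots,g_l}(U)\}$, are open; in any event $E_m$ is a measurable subset witnessing the positive upper density.
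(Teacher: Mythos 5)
Your proof is correct and follows the paper's own route: the paper states Proposition~\ref{prop0.6} as an immediate consequence of Theorem~\ref{thm0.3} (exactly as in the proof of Proposition~\ref{prop0.9}, where the weak E-system hypothesis is used only to guarantee $\mu(U)>0$ for every nonempty open $U$), and your argument simply supplies the details this deduction leaves implicit. The bookkeeping steps you make explicit---adjoining the zero element $\e$ to reinstate the $U$-factor, the Chebyshev-type bound passing from a positive integral average to positive upper density of $\{t\in R\colon mt\in N_{g_1,\dotsc,g_l}(U)\}$, and the measurability of these sets---are precisely what the paper's one-line deduction tacitly uses.
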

Based on Def.~\ref{def0.5} we next introduce a concept---multiply nonwandering motion---for any topological dynamical system $G\curvearrowright_TX$ on a compact metric space $X$, which is the weakest multiple recurrence and which generalizes and strengthens the classical single nonwandering motion~\cite{NS}.

\begin{defn}\label{def0.7}
We say that a point $p\in X$ is \textit{multiply nonwandering} for $G\curvearrowright_TX$ if for any neighborhood $U$ of $p$, any sample elements $g_1, \dotsc,g_l\in G$, the set of multiple return times
\begin{equation*}
N_{g_1,\dotsc,g_l}(U)=\left\{t\in R\setminus\{0\}\colon U\cap g_1^{-t}U\cap\dotsm\cap g_l^{-t}U\not=\varnothing\right\},
\end{equation*}
is of F{\o}lner $\infty$-set in $R$. By $\Omega_F(G\curvearrowright_TX)$ it means the set of all the multiply nonwandering points of $G\curvearrowright_TX$. Here the subscript $_F$ is for Furstenberg because of his multiple recurrence.
\end{defn}

Requiring only $N_{g_1,\dotsc,g_l}(U)\not=\varnothing$ in place of our F{\o}lner $\infty$-set, this notion in fact was first introduced by Balcar et al. in 1987~\cite{BKW} in the more general situation of semigroup actions.

Although a multiply nonwandering point might have no multiple recurrence itself, yet multiple recurrence (even periodic motion) occurs near $p$.
By definitions, it is easy to check the following basic fact:

\begin{Lem}\label{lem0.8}
Let $G$ be any lcscN $R$-module over a syndetic ring $(R,+,\cdot)$. $\Omega_F(G\curvearrowright_TX)$ is an $T$-invariant closed subset of $X$ for any topological dynamical system $G\curvearrowright_TX$.
\end{Lem}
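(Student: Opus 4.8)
The plan is to reduce both assertions to two elementary facts about the return-time sets $N_{g_1,\dotsc,g_l}(U)$ and the class of F{\o}lner $\infty$-sets, after which closedness and invariance drop out quickly. First I would record the following. (i) The set $N_{g_1,\dotsc,g_l}(U)$ depends only on the set $U$ and not on any base point, and it is monotone: if $U\subseteq U'$ then $g_i^{-t}U\subseteq g_i^{-t}U'$ for all $i,t$, whence $N_{g_1,\dotsc,g_l}(U)\subseteq N_{g_1,\dotsc,g_l}(U')$. (ii) The family of F{\o}lner $\infty$-sets in $R$ is upward closed: if $S\subseteq S'$ and $S$ is a F{\o}lner $\infty$-set, then for each weak F{\o}lner sequence $\{F_n\}_1^\infty$ one has $\{t;mt\in S\}\subseteq\{t;mt\in S'\}$, so by monotonicity of upper density $S'$ is again a F{\o}lner $\infty$-set. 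Combining (i) and (ii), to decide whether a point lies in $\Omega_F(G\curvearrowright_TX)$ it suffices to test the defining condition only on a neighborhood basis at the point, say the open metric balls.

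For closedness I would take $p\in\overline{\Omega_F(G\curvearrowright_TX)}$ and an arbitrary open neighborhood $U$ of $p$. Since $p$ is in the closure, there is a point $q\in U\cap\Omega_F(G\curvearrowright_TX)$; as $U$ is open and contains $q$, it is also an open neighborhood of $q$, so $q\in\Omega_F(G\curvearrowright_TX)$ forces $N_{g_1,\dotsc,g_l}(U)$ to be a F{\o}lner $\infty$-set for every $g_1,\dotsc,g_l\in G$. By (i) this is precisely the return set attached to $U$ as a neighborhood of $p$. As $U$ ranges over a basis of open neighborhoods of $p$, the reduction of the first paragraph yields $p\in\Omega_F(G\curvearrowright_TX)$, so $\Omega_F(G\curvearrowright_TX)$ is closed.

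For $T$-invariance I would use that $(G,\pmb{+})$ is abelian, being the additive group of an $R$-module, so the action consists of pairwise commuting homeomorphisms $T_g$; in particular $T_h$ commutes with each $g_i^{-t}$ and with its inverse. Fix $p\in\Omega_F(G\curvearrowright_TX)$ and $h\in G$, and set $q=T_h(p)$. For any open neighborhood $V$ of $q$, the set $U=T_h^{-1}(V)$ is an open neighborhood of $p$, and applying the bijection $T_h^{-1}$ together with commutativity gives $T_h^{-1}\bigl(V\cap g_1^{-t}V\cap\dotsm\cap g_l^{-t}V\bigr)=U\cap g_1^{-t}U\cap\dotsm\cap g_l^{-t}U$; since $T_h^{-1}$ is bijective, the two intersections are simultaneously empty or nonempty, so $N_{g_1,\dotsc,g_l}(V)=N_{g_1,\dotsc,g_l}(U)$, a F{\o}lner $\infty$-set by $p\in\Omega_F(G\curvearrowright_TX)$. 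Hence $q=T_h(p)\in\Omega_F(G\curvearrowright_TX)$, and replacing $h$ by its inverse gives the reverse inclusion, whence $T_h(\Omega_F(G\curvearrowright_TX))=\Omega_F(G\curvearrowright_TX)$.

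All the steps are routine once the right framing is chosen; the only points that genuinely need care are the two structural facts (i) and (ii)\,---\,the monotonicity that licenses the reduction to a neighborhood basis\,---\,and the commutation identity $N_{g_1,\dotsc,g_l}(V)=N_{g_1,\dotsc,g_l}(T_h^{-1}V)$, which rests essentially on the commutativity of the module action. I expect no serious obstacle beyond this bookkeeping, in keeping with the statement being a basic fact.
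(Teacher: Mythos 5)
Your proof is correct, and it is exactly the routine verification the paper has in mind when it states the lemma with only the remark ``by definitions, it is easy to check'': closedness via the monotonicity $U\subseteq U'\Rightarrow N_{g_1,\dotsc,g_l}(U)\subseteq N_{g_1,\dotsc,g_l}(U')$ together with upward-closedness of F{\o}lner $\infty$-sets, and invariance via commutativity of the abelian action giving $N_{g_1,\dotsc,g_l}(T_h^{-1}V)=N_{g_1,\dotsc,g_l}(V)$. No gaps; your write-up simply supplies the bookkeeping the paper omits.
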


As a simple consequence of Theorem~\ref{thm0.3}, we can easily obtain the following result in the topological settings by considering the density points of ergodic probability measures.

\begin{prop}\label{prop0.9}
Let $G$ be any lcscN $R$-module over a syndetic ring $R$. Then for any topological dynamical system $G\curvearrowright_TX$ on a compact metric space $X$, $\Omega_F(G\curvearrowright_TX)$ is of full probability.
\end{prop}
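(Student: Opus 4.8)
The plan is to prove the stronger pointwise inclusion $\operatorname{supp}(\mu)\subseteq\Omega_F(G\curvearrowright_TX)$ for every $T$-invariant Borel probability measure $\mu$ on $X$, and then to read off full probability from it. Indeed, since $X$ is compact metric (hence second countable), the complement of $\operatorname{supp}(\mu)$ is a countable union of $\mu$-null open sets, so $\mu(\operatorname{supp}(\mu))=1$; and by Lemma~\ref{lem0.8} the set $\Omega_F$ is closed, hence Borel, so the desired conclusion $\mu(\Omega_F)\ge\mu(\operatorname{supp}(\mu))=1$ follows at once from the inclusion. As $\mu$ is arbitrary, $\Omega_F$ is then of full probability.

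To establish the inclusion I would fix an invariant $\mu$, a point $p\in\operatorname{supp}(\mu)$, a neighborhood $U$ of $p$, and sample elements $g_1,\dots,g_l\in G$. By definition of the support, $\mu(U)>0$, so $U$ is a Borel set of positive measure and is eligible as the set $A$ in the weak \textit{Sz}-property of Theorem~\ref{thm0.3}. Because the return-time set $N_{g_1,\dots,g_l}(U)$ of Definition~\ref{def0.7} carries a leading factor $U$, I would apply Theorem~\ref{thm0.3} not to $\{g_1,\dots,g_l\}$ but to the enlarged finite family $F=\{0,g_1,\dots,g_l\}$; this is legitimate since $T_0^{t}=\mathrm{Id}_X$, and then the factor $0^{mt}1_U$ reproduces the leading $1_U$. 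Theorem~\ref{thm0.3} thus yields an integer $M=M(U,F)\ge1$ such that, for every weak F{\o}lner sequence $\{F_n\}_1^\infty$ in $(R,+)$, there is some $m$ with $1\le m\le M$ and
\begin{gather*}
\limsup_{n\to+\infty}\frac{1}{|F_n|}\int_{F_n}\mu\bigl(U\cap g_1^{-mt}U\cap\dots\cap g_l^{-mt}U\bigr)\,dt>0.
\end{gather*}

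The next step converts this positivity of averages into a density statement. Setting $E_m=\{t\in R:\mu(U\cap g_1^{-mt}U\cap\dots\cap g_l^{-mt}U)>0\}$, the integrand above vanishes off $E_m$ and is bounded by $1$ on it, so the displayed inequality forces $E_m$ to have positive upper density along $\{F_n\}$. For each $t\in E_m$ the intersection is nonempty, i.e.\ $mt\in N_{g_1,\dots,g_l}(U)$, except for $t$ in the density-zero torsion set $\{t:mt=0\}$; hence $\{t:mt\in N_{g_1,\dots,g_l}(U)\}$ has positive upper density along $\{F_n\}$. Since $\{F_n\}$ was arbitrary and $1\le m\le M$, this is exactly the assertion that $N_{g_1,\dots,g_l}(U)$ is a F{\o}lner $\infty$-set in the sense of Definition~\ref{def0.5}; as $U$, $l$, and the $g_i$ were arbitrary, $p\in\Omega_F$. (Alternatively, one may restrict to $Y=\operatorname{supp}(\mu)$, note that $G\curvearrowright_TY$ is a weak E-system, invoke Proposition~\ref{prop0.6} on $Y$, and use that $N^Y_{g_1,\dots,g_l}(U\cap Y)\subseteq N^X_{g_1,\dots,g_l}(U)$ together with the evident fact that any superset of a F{\o}lner $\infty$-set is again one.)

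The whole argument is a bookkeeping exercise once Theorem~\ref{thm0.3} is available, so there is no deep obstacle; the only points needing care are the three adjustments just made—inserting $0$ into the family to recover the leading factor $U$, passing from positivity of the average to positive upper density of $E_m$, and discarding the torsion set $\{t:mt=0\}$. If one prefers the ergodic route hinted at, one first writes the ergodic decomposition $\mu=\int\mu_\omega\,dP(\omega)$, verifies $\operatorname{supp}(\mu_\omega)\subseteq\Omega_F$ for $P$-a.e.\ $\omega$ exactly as above (the density points of $\mu_\omega$ being precisely the points of $\operatorname{supp}(\mu_\omega)$), and then integrates, using the measurability of $\Omega_F$ from Lemma~\ref{lem0.8}, to obtain $\mu(\Omega_F)=\int\mu_\omega(\Omega_F)\,dP(\omega)=1$.
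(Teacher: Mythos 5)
Your proposal is correct and is essentially the paper's own argument: the paper likewise fixes an (ergodic) invariant measure, observes that every neighborhood of a support point has positive $\mu$-measure, and invokes Theorem~\ref{thm0.3} to conclude that $N_{g_1,\dotsc,g_l}(U)$ is a F{\o}lner $\infty$-set, so that $\mathrm{supp}(\mu)\subseteq\Omega_F(G\curvearrowright_TX)$ yields full probability. The bookkeeping you add (inserting the zero element of $G$ to recover the leading factor $U$, converting positivity of the averages into positive upper density, and discarding $\{t\colon mt=0\}$) is exactly what the paper's four-line proof leaves implicit; only your density-zero claim for $\{t\colon mt=0\}$ tacitly assumes the syndetic ring has characteristic zero (it fails, e.g., over a ring where $m\cdot 1_R=0$), but this corner case is equally unaddressed by the paper's own proof.
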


\begin{proof}
Given any ergodic measure $\mu$ of $G\curvearrowright_TX$, let $\textrm{supp}(\mu)$ be the support of $\mu$. Then $\textrm{supp}(\mu)$ is of $\mu$-measure $1$ and for any $x\in\textrm{supp}(\mu)$ and any neighborhood $U$ of $x$ we have $\mu(U)>0$. Thus by Theorem~\ref{thm0.3}, it follows that $N_{g_1,\dotsc,g_l}(U)$ is a F{\o}lner $\infty$-set in $R$ for any finite set of sample elements $g_1,\dotsc,g_l\in G$. This concludes the proof of Proposition~\ref{prop0.9}.
\end{proof}

Because there always exists an ergodic Borel probability of $G\curvearrowright_TX$ for $G$ is amenable and $X$ is a compact metric space, $\Omega_F(G\curvearrowright_TX)$ is a nonempty subset of $X$.

In what follows we assume $G\curvearrowright_TX$ is a topological dynamical system on a compact metric space. Next let us note that the restriction of $T$ to $\Omega_F(G\curvearrowright_TX)$ is itself a topological $G$-space and further it has its own nonempty multiply nonwandering point set $\Omega_F(G\curvearrowright_T(\Omega_F(G\curvearrowright_TX)))$. We set
\begin{gather*}
\Omega_1(T)=\Omega_F(G\curvearrowright_TX),\ \Omega_2(T)=\Omega_F(G\curvearrowright_T\Omega_1(T)),\ \dotsc,\ \Omega_{\xi+1}(T)=\Omega_F(G\curvearrowright_T\Omega_\xi(T)),\ \dotsc
\intertext{and if $\xi$ is a limit ordinal, we define}\Omega_\xi(T)=\bigcap_{\theta<\xi}\Omega_\theta(T)\quad (\not=\varnothing, T\textrm{-invariant and closed}).
\end{gather*}
So by transfinite induction we can get a chain (possibly transfinite):
$$
\Omega_1(T)\supseteq \Omega_2(T)\supseteq\dotsm\supseteq \Omega_n(T)\supseteq \Omega_{n+1}(T)\supseteq\dotsm\supseteq \Omega_\omega(T)\subseteq \Omega_{\omega+1}(T)\supseteq\dotsm.
$$
Since $X$ is a compact Hausdorff space, then from the Cantor-Baire theorem it follows that there exists an ordinal $\gamma\ge1$ such that
\begin{gather*}
\Omega_\gamma(T)=\Omega_F(G\curvearrowright_T\Omega_\gamma(T)).
\end{gather*}

\begin{defn}\label{def0.10}
As in the classical case (cf., e.g.,~\cite[Chap.~V.5]{NS}), such $\Omega_\gamma(T)$ is referred to as the \textit{set of multiple center motions} or simply the \textit{multiple Birkhoff center} of order $\gamma$.
\end{defn}

Obviously, the multiple Birkhoff center $\Omega_\gamma(T)$ is an $T$-invariant, compact, and full probability set by Proposition~\ref{prop0.9} and its order $\gamma$ is not greater than a transfinite number of class I\!I.

Using the multiple Birkhoff center $\Omega_\gamma(T)$ as our tool, we shall show that there exists  a ``large'' set of multiply recurrent points in any topological $G$-space in $\S\ref{sec5.2}$. We note that the multiple Birkhoff center that is defined in a weaker manner of lacking the F{\o}lner $\infty$-set has been studied recently by Kwietniak et al~\cite{KLOY} using different approaches (cf.~e.g. \cite[$\S1.8$ and $\S3.1$]{Fur}) for $\mathbb{Z}$-action topological dynamical systems.

The remainder of this paper will be organized as follows.

\tableofcontents
\subsection*{Acknowledgments}
Finally, the author is deeply grateful to Professor Hillel Furstenberg for many helpful suggestions, comments, and carefully checking the details of the original manuscript.

\section{Inverse limits of \textit{Sz}-systems and weak \textit{Sz}-systems}\label{sec1}
In this section, let $(G,\pmb{+})$ be an lcscN $R$-module over a syndetic ring $(R,+,\cdot)$. Let $(X,\mathscr{X},\mu)$ be any $G$-space with the $G$-action map $T$ and write $\X=G\curvearrowright_T(X,\mathscr{X},\mu)$ and simply write $T_g^t=g^t$ for any $t\in R$ and $g\in G$.

First, following Hillel Furstenberg~\cite{Fur} we introduce the following concept.

\begin{defn}\label{def1.1}
We say that $\X$ is an \textit{Sz-system} over all asymptotic F{\o}lner sequences in $(R,+)$, where \textit{Sz} is for Szemer\'{e}di, provided that for any integer $l\ge2$,
\begin{itemize}
\item whenever $A\in\mathscr{X}$ with $\mu(A)>0$ and $g_1,\dotsc,g_l\in G$, then
\begin{gather}\label{eq1.1}
\liminf_{n\to+\infty}\frac{1}{|F_n|}\int_{F_n}\mu\left(g_1^{-t}A\cap\dotsm\cap g_l^{-t}A\right)dt>0
\end{gather}
over any asymptotic F{\o}lner sequence $\{F_n\}_1^\infty$ in $(R,+)$;
\end{itemize}
or equivalently,
\begin{itemize}
\item whenever $f\in L^\infty(X,\mathscr{X},\mu)$ with $f\ge0$ \textit{a.e.} and $\int_Xfd\mu>0$ and $g_1,\dotsc,g_l\in G$, then
\begin{gather}\label{eq1.2}
\liminf_{n\to+\infty}\frac{1}{|F_n|}\int_{F_n}\int_Xg_1^tf\dotsm g_l^tfd\mu dt>0
\end{gather}
over any asymptotic F{\o}lner sequence $\{F_n\}_1^\infty$ in $(R,+)$.
\end{itemize}
\end{defn}

We note that if $G$ is an $\mathbb{Z}$-module and $F_n=\{1,2,\dotsc,n\}$, then this definition reduces to the classical case of Furstenberg~\cite[Definition~7.1]{Fur}.

We will need the following simple equivalence later on.\footnote{There is a similar characterization to the property (\ref{eq1.2}) by an almost same argument with $\int_Xg_1^tf\dotsm g_l^tfd\mu$ in place of $\int_Xg_1^t1_A\dotsm g_l^t1_Ad\mu$.}

\begin{Lem}\label{lem1.2}
Let $\{F_n\}_1^\infty$ be any asymptotic F{\o}lner sequence in $(R,+)$.
Then an $A\in\mathscr{X}$ with $\mu(A)>0$ has the property $(\ref{eq1.1})$ over~$\{F_n\}_1^\infty$ if and only if there exist $\delta>0, \varepsilon>0$, and a Borel subset $\mathcal{H}$ of $R$ such that
\begin{gather*}
\mathrm{D}_*(\mathcal{H}):=\liminf_{n\to+\infty}\frac{|\mathcal{H}\cap F_n|}{|F_n|}>\delta\quad \textrm{and}\quad\int_Xg_1^t1_A(x)\dotsm g_l^t1_A(x)d\mu(x)>\varepsilon\ \forall t\in\mathcal{H}.
\end{gather*}
Here $\delta,\varepsilon$ depend upon $\{F_n\}_1^\infty$.
\end{Lem}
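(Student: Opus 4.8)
The plan is to compress the whole statement onto the single scalar function
\[
\phi(t):=\int_Xg_1^t1_A\dotsm g_l^t1_A\,d\mu=\mu\left(g_1^{-t}A\cap\dotsm\cap g_l^{-t}A\right),
\]
where the second equality holds because each $g_i$ is $\mu$-preserving, so $g_i^t1_A=1_{g_i^{-t}A}$ $\mu$-a.e. Two elementary features of $\phi$ drive the argument: it is bounded, $0\le\phi(t)\le\mu(A)=:a\le1$ (since $\phi(t)\le\mu(g_1^{-t}A)=\mu(A)$), and it is Borel measurable in $t$, which follows from the joint measurability of the $G$-action together with Fubini's theorem. Writing $c:=\liminf_{n\to+\infty}\frac1{|F_n|}\int_{F_n}\phi(t)\,dt$, the property $(\ref{eq1.1})$ over $\{F_n\}_1^\infty$ is precisely the assertion $c>0$, so the lemma amounts to the equivalence ``$c>0$'' $\Longleftrightarrow$ ``$\phi$ exceeds some $\varepsilon>0$ on a set of positive lower density.''

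For the easy implication $(\Leftarrow)$, suppose $\delta,\varepsilon>0$ and a Borel set $\mathcal{H}$ are given with $\mathrm{D}_*(\mathcal{H})>\delta$ and $\phi(t)>\varepsilon$ for all $t\in\mathcal{H}$. Then for each $n$,
\[
\frac1{|F_n|}\int_{F_n}\phi(t)\,dt\ge\frac1{|F_n|}\int_{F_n\cap\mathcal{H}}\phi(t)\,dt>\varepsilon\,\frac{|\mathcal{H}\cap F_n|}{|F_n|},
\]
and taking $\liminf_{n\to+\infty}$ on both sides gives $c\ge\varepsilon\,\mathrm{D}_*(\mathcal{H})\ge\varepsilon\delta>0$, which is $(\ref{eq1.1})$.

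For the converse $(\Rightarrow)$, assume $c>0$, set $\varepsilon:=c/2$, and define the (Borel) superlevel set $\mathcal{H}:=\{t\in R\colon\phi(t)>\varepsilon\}$, which by construction satisfies the second required inequality. To bound its lower density from below, split the average using $\phi\le a$ on $\mathcal{H}$ and $\phi\le\varepsilon$ off $\mathcal{H}$:
\[
\frac1{|F_n|}\int_{F_n}\phi(t)\,dt\le a\,\frac{|\mathcal{H}\cap F_n|}{|F_n|}+\varepsilon,
\]
so that $\frac{|\mathcal{H}\cap F_n|}{|F_n|}\ge\frac1a\left(\frac1{|F_n|}\int_{F_n}\phi\,dt-\varepsilon\right)$; taking $\liminf_{n\to+\infty}$ yields $\mathrm{D}_*(\mathcal{H})\ge(c-\varepsilon)/a=c/(2a)>0$. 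Choosing $\delta:=c/(4a)$ then gives the strict inequality $\mathrm{D}_*(\mathcal{H})>\delta$, completing the equivalence; note that $c$, hence $\varepsilon$ and $\delta$, genuinely depend on the chosen sequence $\{F_n\}_1^\infty$, as the statement records. I do not expect a serious obstacle here: the argument is a one-function averaging estimate, and the only point deserving explicit care is the Borel measurability of $\phi$ (so that $\mathcal{H}$ is admissible), which I would justify once via joint measurability of $T$ and Fubini.
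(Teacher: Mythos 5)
Your proof is correct and follows essentially the same route as the paper's: both directions rest on taking $\mathcal{H}$ to be a superlevel set of $\phi(t)=\int_X g_1^t1_A\dotsm g_l^t1_A\,d\mu$ and splitting the average over $F_n\cap\mathcal{H}$ and $F_n\cap\mathcal{H}^c$. The only difference is cosmetic: you derive the explicit lower bound $\mathrm{D}_*(\mathcal{H})\ge c/(2a)$ directly (and may take $\delta=c/(4a)$), whereas the paper fixes $\varepsilon$ with the averages exceeding $3\varepsilon$ and argues by contradiction that $\mathrm{D}_*(\mathcal{H})$ cannot vanish.
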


\begin{proof}
Sufficiency. Since
\begin{eqnarray*}
\lefteqn{\liminf_{n\to+\infty}\frac{1}{|F_n|}\int_{F_n}\int_Xg_1^{t}1_A\dotsm g_l^{t}1_Ad\mu dt}\\
& &\ge\liminf_{n\to+\infty}\frac{|\mathcal{H}\cap F_n|}{|F_n|}\frac{1}{|F_n\cap \mathcal{H}|}\int_{F_n\cap\mathcal{H}}\int_Xg_1^{t}1_A\dotsm g_l^{t}1_Ad\mu dt\\
& &\ge\delta\varepsilon,
\end{eqnarray*}
the sufficiency holds.

Necessity. Take $\varepsilon>0$ so that
\begin{gather*}
\liminf_{n\to+\infty}\frac{1}{|F_n|}\int_{F_n}\int_Xg_1^{t}1_A(x)\dotsm g_l^{t}1_A(x)d\mu(x)dt>3\varepsilon.
\end{gather*}
Set
$$
\mathcal{H}=\left\{t\in R\,\bigg{|}\,\int_Xg_1^{t}1_A(x)\dotsm g_l^{t}1_A(x)d\mu(x)>\varepsilon\right\}\quad \textrm{and}\quad \mathcal{H}^c=R\setminus \mathcal{H}.
$$
Clearly $\mathcal{H}$ is a Borel subset of $R$. To prove the necessity, we only need to prove that $\mathrm{D}_*(\mathcal{H})>0$ over $\{F_n\}_1^\infty$. Indeed, otherwise,
\begin{equation*}\begin{split}
3\varepsilon&<\liminf_{n\to+\infty}\frac{1}{|F_n|}\int_{F_n}\int_Xg_1^{t}1_A(x)\dotsm g_l^{t}1_A(x)d\mu(x)dt\\
&=\liminf_{n\to+\infty}\frac{|\mathcal{H}\cap F_n|}{|F_n|}\frac{1}{|\mathcal{H}\cap F_n|}\int_{F_n\cap \mathcal{H}}\int_Xg_1^{t}1_A(x)\dotsm g_l^{t}1_A(x)d\mu(x)dt\\
&{}\quad+\limsup_{n\to+\infty}\frac{1}{|F_n|}\int_{F_n\cap \mathcal{H}^c}\int_Xg_1^{t}1_A(x)\dotsm g_l^{t}1_A(x)d\mu(x)dt\\
&\le\limsup_{n\to+\infty}\frac{1}{|F_n|}\cdot\varepsilon|F_n\cap \mathcal{H}^c|\\
&\le\varepsilon
\end{split}\end{equation*}
which is a contradiction.

The proof of Lemma~\ref{lem1.2} is thus completed.
\end{proof}

The following is useful for us to prove the multiple recurrence theorem (Theorem~\ref{thm0.1}) by transfinite induction.

\begin{prop}\label{prop1.3}
Let $\{\mathscr{X}_\theta;\theta\in\Theta\}$ be a totally ordered family of $\sigma$-subalgebras of $\mathscr{X}$; that is to say, for any $\theta_1,\theta_2\in\Theta$ either $\mathscr{X}_{\theta_1}\subset\mathscr{X}_{\theta_2}$ or $\mathscr{X}_{\theta_2}\subset\mathscr{X}_{\theta_1}$. If each $G\curvearrowright_T(X,\mathscr{X}_\theta,\mu)$ is an \textit{Sz}-system and set $\mathscr{Y}=\sigma\big{(}\bigcup_\theta\mathscr{X}_\theta\big{)}$, then
$G\curvearrowright_T(X,\mathscr{Y},\mu)$ is an \textit{Sz}-system.
\end{prop}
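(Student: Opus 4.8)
The plan is to deduce the \textit{Sz}-property on the generated $\sigma$-algebra $\mathscr{Y}$ from the one already available on each piece $\mathscr{X}_\theta$, via an $L^1$-approximation combined with a stability estimate for the multilinear averages that is \emph{uniform in} $t$. First I would use that $\{\mathscr{X}_\theta\}$ is \emph{totally} ordered: any finitely many sets drawn from $\bigcup_\theta\mathscr{X}_\theta$ lie in a common $\mathscr{X}_\theta$, so $\mathscr{A}:=\bigcup_\theta\mathscr{X}_\theta$ is an \emph{algebra} of sets with $\sigma(\mathscr{A})=\mathscr{Y}$. The classical approximation theorem then gives, for any $A\in\mathscr{Y}$ with $\mu(A)>0$ and any $\eta>0$, a single index $\theta$ and a set $B\in\mathscr{X}_\theta$ with $\mu(A\vartriangle B)<\eta$; taking $\eta<\mu(A)$ forces $\mu(B)>0$, so the \textit{Sz}-property of $G\curvearrowright_T(X,\mathscr{X}_\theta,\mu)$ applies to $B$.

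The computational core is the stability estimate $\big|\int_X g_1^t1_A\cdots g_l^t1_A\,d\mu-\int_X g_1^t1_B\cdots g_l^t1_B\,d\mu\big|\le l\,\mu(A\vartriangle B)$, valid for \emph{every} $t\in R$. It follows from the telescoping identity $\prod_{i}a_i-\prod_{i}b_i=\sum_{k=1}^{l}b_1\cdots b_{k-1}(a_k-b_k)a_{k+1}\cdots a_l$ applied pointwise to $a_i=g_i^t1_A$ and $b_i=g_i^t1_B$ (all lying in $[0,1]$): this bounds the integrand by $\sum_{k}g_k^t1_{A\vartriangle B}$, and integrating term by term while using that each $g_k^t$ is $\mu$-preserving gives the bound. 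With this in hand I would invoke Lemma~\ref{lem1.2} for $B$: there are $\delta,\varepsilon>0$ and a Borel set $\mathcal{H}\subseteq R$ with $\mathrm{D}_*(\mathcal{H})>\delta$ and $\int_X g_1^t1_B\cdots g_l^t1_B\,d\mu>\varepsilon$ for all $t\in\mathcal{H}$. The stability estimate then yields $\int_X g_1^t1_A\cdots g_l^t1_A\,d\mu>\varepsilon-l\eta$ on $\mathcal{H}$, and as soon as $\eta<\varepsilon/(2l)$ the sufficiency half of Lemma~\ref{lem1.2}, applied with $\varepsilon/2$ and $\delta$, shows that $A$ satisfies (\ref{eq1.1}) over the prescribed asymptotic F{\o}lner sequence. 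Since $A$ and the $g_i$ were arbitrary, $G\curvearrowright_T(X,\mathscr{Y},\mu)$ is then an \textit{Sz}-system.

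The main obstacle is precisely the quantitative matching in the last step: the constant $\varepsilon$ delivered by the \textit{Sz}-property of $B$ depends on $B$, hence on $\eta$, so the requirement $\eta<\varepsilon/(2l)$ is a priori circular, and ruling out $\varepsilon\to0$ as $B\to A$ is where the genuine content lies. To break the circularity I would work with the conditional expectations $f_\theta=E(1_A\mid\mathscr{X}_\theta)$, which satisfy $0\le f_\theta\le1$ and $\int_Xf_\theta\,d\mu=\mu(A)$, and which converge to $1_A$ in $L^1$ as $\theta$ increases along the net (strong convergence of the increasing net of conditional-expectation projections, since $\mathscr{A}$ generates $\mathscr{Y}$). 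Thresholding at $\mu(A)/2$ produces $B_\theta=\{f_\theta\ge\mu(A)/2\}\in\mathscr{X}_\theta$ with the \emph{fixed} lower bound $\mu(B_\theta)\ge\mu(A)/2$ and the pointwise domination $f_\theta\ge(\mu(A)/2)\,1_{B_\theta}$. Positivity of the operators $g_i^t$ then gives $\int_X g_1^tf_\theta\cdots g_l^tf_\theta\,d\mu\ge(\mu(A)/2)^l\int_X g_1^t1_{B_\theta}\cdots g_l^t1_{B_\theta}\,d\mu$, while the $L^1$-form of the telescoping estimate gives $\big|\int_X g_1^tf_\theta\cdots g_l^tf_\theta\,d\mu-\int_X g_1^t1_A\cdots g_l^t1_A\,d\mu\big|\le l\|f_\theta-1_A\|_1$. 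The remaining, and decisive, point is to bound the \textit{Sz}-average of $B_\theta$ below \emph{uniformly in} $\theta$ --- i.e.\ to have a lower bound depending only on the measure $\mu(A)/2$ and not on the particular approximant. I would extract this uniformity from the quantitative (Khintchine-type) lower bounds carried along the Furstenberg factors chain, or, failing that, from a compactness argument in the fixed ambient system; granting it, one chooses $\theta$ with $\|f_\theta-1_A\|_1$ small enough to close the estimate.
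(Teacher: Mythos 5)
Your first two paragraphs are sound and run parallel to the paper's own argument: total ordering makes $\mathscr{A}=\bigcup_\theta\mathscr{X}_\theta$ an algebra generating $\mathscr{Y}$, and the $t$-uniform telescoping estimate combined with the two halves of Lemma~\ref{lem1.2} is exactly the right transfer mechanism. The gap is precisely where you locate it, and neither of your proposed repairs closes it. A lower bound for $\int_X g_1^t1_{B_\theta}\dotsm g_l^t1_{B_\theta}\,d\mu$ on a set of $t$ of positive lower density, depending \emph{only} on $\mu(B_\theta)\ge\mu(A)/2$ and not on the particular set $B_\theta$ or on $\theta$, is not a consequence of each $(X,\mathscr{X}_\theta,\mu)$ being an \textit{Sz}-system: Definition~\ref{def1.1} is qualitative, giving for each fixed set a positive liminf with no modulus, and the uniformity you ask for is a \emph{uniform} multiple recurrence theorem --- a strictly stronger statement proved nowhere in this paper. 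The Khintchine-type property of $\X_1$ concerns only the first step of the Furstenberg chain, not all the factors $\mathscr{X}_\theta$, and a compactness argument fails because $1_B\mapsto\int_Xg_1^t1_B\dotsm g_l^t1_B\,d\mu$ is not weakly continuous and weak limits of indicators of sets of measure $\ge\mu(A)/2$ need not be indicators. So the proposal is incomplete at its decisive, load-bearing step.

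Two devices do close it. The paper's: choose the approximating algebra sets \emph{increasing}, $A_1\subseteq A_2\subseteq\dotsm$ with $\|1_A-1_{A_i}\|_2\to0$; then $\int_Xg_1^t1_{A_i}\dotsm g_l^t1_{A_i}\,d\mu$ is nondecreasing in $i$ for every fixed $t$, so the data $(\varepsilon,\delta,\mathcal{H}_{i_0})$ that Lemma~\ref{lem1.2} yields for the \emph{first} approximant persists for all later ones ($\mathcal{H}_i\supseteq\mathcal{H}_{i_0}$), and the perturbation bound $\int_Xg_1^t1_A\dotsm g_l^t1_A\,d\mu>\varepsilon-\epsilon_i$ on $\mathcal{H}_i$ finishes via the sufficiency half of Lemma~\ref{lem1.2}; the \textit{Sz}-hypothesis is invoked only once, so no uniformity over approximants is ever needed. (Note, though, that the existence of \emph{increasing} approximants in the algebra is itself not automatic --- for the dyadic filtration of $[0,1]$ and $A$ a fat Cantor set there are none --- so that step requires extra care.) The robust completion of your own conditional-expectation route is Furstenberg's original device: fix $\eta=1/(2l)$ \emph{before} choosing $\theta$, and threshold near one rather than at $\mu(A)/2$, setting $B_\theta=\{E_\mu(1_A|\mathscr{X}_\theta)>1-\eta\}$. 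Once $\|E_\mu(1_A|\mathscr{X}_\theta)-1_A\|_1$ is small one has $\mu(B_\theta)\ge\mu(A)/2>0$, and --- crucially --- a \emph{multiplicative} rather than additive error estimate, valid for every $t$: integrating the pointwise inequality $\prod_ig_i^t1_A\ge\prod_ig_i^t1_{B_\theta}\bigl(1-\sum_jg_j^t1_{X\setminus A}\bigr)$ and conditioning on the $G$-invariant $\sigma$-algebra $\mathscr{X}_\theta$ (on the set $\{\prod_ig_i^t1_{B_\theta}=1\}$ each $g_j^tE_\mu(1_{X\setminus A}|\mathscr{X}_\theta)<\eta$, and $\prod_ig_i^t1_{B_\theta}$ is $\mathscr{X}_\theta$-measurable) gives $\int_X\prod_ig_i^t1_A\,d\mu\ge(1-l\eta)\int_X\prod_ig_i^t1_{B_\theta}\,d\mu\ge\tfrac12\int_X\prod_ig_i^t1_{B_\theta}\,d\mu$. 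Since $\eta$ depends only on $l$, a single application of the \textit{Sz}-property of $(X,\mathscr{X}_\theta,\mu)$ to $B_\theta$ now yields the required liminf for $A$, and the circularity never arises. Replacing your threshold $\mu(A)/2$ by $1-\eta$ and your additive $L^1$ estimate by this conditional multiplicative one turns your sketch into a complete proof.
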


\begin{proof}
Here our argument is different with and more concise than the proof of \cite[Proposition~7.1]{Fur}.\footnote{Inasmuch as the original proof of \cite[Proposition~7.1]{Fur} for $R=\mathbb{Z}$ (also see \cite[Proposition~7.26]{EW} for $G=\mathbb{Z}$) involves the disintegration $\{\mu_x;x\in X\}$ of $\mu$ given $\mathscr{X}_\theta$, whence there $(X,\mathscr{X},\mu)$ had to be a standard Borel space to obtain a disintegration of $\mu\colon\mu=\int_X\mu_xd\mu$ over $\mathscr{X}_\theta$ in \cite{FK, Fur, EW}.}
Let $A\in\mathscr{Y}$ with $\mu(A)>0$ and let $g_1,\dotsc,g_l\in G$. Since $\bigcup_\theta\mathscr{X}_\theta$ is an algebra generating $\mathscr{Y}$, by induction we can find $A_n\in\bigcup_\theta\mathscr{X}_\theta$ and $\epsilon_n\downarrow0$ so that
\begin{gather*}
\mu(A\vartriangle A_n)<\left(2^{-l}\epsilon_n\right)^2 \quad \textrm{and} \quad A_n\subseteq A_{n+1},\quad n=1,2,\dotsc.
\end{gather*}
Then $\|1_A-1_{A_n}\|_2<\epsilon_n2^{-l}$ in $\mathfrak{L}^2(X,\mathscr{Y},\mu)$ for each $n$.

Given any asymptotic F{\o}lner sequence $\{F_n\}_1^\infty$ in $(R,+)$ and $i_0\ge1$, by Lemma~\ref{lem1.2} it follows that for some $\varepsilon>0$ and $\delta>0$, the set
\begin{gather*}
\mathcal{H}_{i_0}=\left\{t\in R\,\bigg{|}\,\int_Xg_1^t1_{A_{i_0}}\dotsm g_l^t1_{A_{i_0}}d\mu>\varepsilon\right\}
\end{gather*}
has lower density $\ge\delta$ over $\{F_n\}_1^\infty$. Since $\{A_n\}$ is monotonically increasing,
\begin{gather*}
\mathcal{H}_i=\left\{t\in R\,\bigg{|}\,\int_Xg_1^{t}1_{A_{i}}\dotsm g_l^{t}1_{A_{i}}d\mu>\varepsilon\right\}\supseteq\mathcal{H}_{i_0}\quad\forall i\ge i_0.
\end{gather*}
Now by
\begin{gather*}
1_A(x)=1_{A_i}(x)+\psi_i(x)\quad \textrm{where}\ \|\psi_i\|_\infty\le1\textrm{ and }\|\psi_i\|_2<2^{-l}\epsilon_i
\end{gather*}
we can obtain that
\begin{equation*}
\begin{split}
\int_Xg_1^t1_A(x)\dotsm g_l^t1_A(x)d\mu(x)&=\int_Xg_1^t(1_{A_i}+\psi_i)\dotsm g_l^t(1_{A_i}+\psi_i)d\mu\\
&\ge\int_Xg_1^t1_{A_i}\dotsm g_l^t1_{A_i}d\mu-\epsilon_i\\
&>\varepsilon-\epsilon_i\quad \forall t\in\mathcal{H}_i,\ i\ge i_0.
\end{split}
\end{equation*}
Thus, by Lemma~\ref{lem1.2} once again, it follows that $G\curvearrowright_T(X,\mathscr{Y},\mu)$ is an \textit{Sz}-system.

This proves Proposition~\ref{prop1.3}.
\end{proof}

This result immediately leads to the following important fact, which is one of our three ladders for proving Theorem~\ref{thm0.1}.

\begin{cor}\label{cor1.4}
Let $\{\X_\xi; \xi\le\eta\}$ be a Furstenberg factors chain of a nontrivial standard Borel $G$-system $\X=G\curvearrowright_T(X,\mathscr{X},\mu)$. Assume that the ordinal $\xi$ is a limit ordinal and that $\X_\theta$ is an \textit{Sz}-system for each $\theta<\xi$. Then
$\X_\xi$ is an \textit{Sz}-system.
\end{cor}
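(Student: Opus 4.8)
The plan is to derive Corollary~\ref{cor1.4} as a direct application of Proposition~\ref{prop1.3}, so that the real work reduces to recasting the limit factor in the language of a totally ordered family of $\sigma$-subalgebras. Recall from the Furstenberg Structure Theorem that every intermediate factor has the form $\X_\theta=(X,\mathscr{X}_\theta,\mu,G)$ with $\pi_\theta=\textit{Id}_X$; thus the entire chain below $\xi$ is encoded by the $\sigma$-subalgebras $\{\mathscr{X}_\theta\}_{\theta<\xi}$ of $\mathscr{X}$ sitting over one fixed measure space, with $\X_\theta=G\curvearrowright_T(X,\mathscr{X}_\theta,\mu)$.

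First I would verify that $\Theta=\{\theta:\theta<\xi\}$ indexes a totally ordered family of $\sigma$-algebras. Indeed, for $\theta_1<\theta_2<\xi$ property~$(b)$ of the Structure Theorem supplies a factor $G$-map $\pi_{\theta_2,\theta_1}\colon\X_{\theta_2}\to\X_{\theta_1}$; since all the maps in play are $\textit{Id}_X$, this forces $\mathscr{X}_{\theta_1}\subseteq\mathscr{X}_{\theta_2}$. Hence any two members of $\{\mathscr{X}_\theta\}_{\theta<\xi}$ are comparable, so the ``totally ordered family'' hypothesis of Proposition~\ref{prop1.3} is met, and by assumption each $G\curvearrowright_T(X,\mathscr{X}_\theta,\mu)$ is an \textit{Sz}-system.

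Next I would identify the target $\sigma$-algebra. Since $\xi$ is a limit ordinal, property~$(d)$ gives $\X_\xi=\underleftarrow{\lim}_{\theta<\xi}\X_\theta$; because the underlying spaces all coincide and the bonding maps are the identity on $X$, this inverse limit is realized on the $\sigma$-algebra generated by the union, i.e. $\mathscr{X}_\xi=\sigma\bigl(\bigcup_{\theta<\xi}\mathscr{X}_\theta\bigr)$. Setting $\mathscr{Y}=\mathscr{X}_\xi$, Proposition~\ref{prop1.3} then yields at once that $\X_\xi=G\curvearrowright_T(X,\mathscr{X}_\xi,\mu)$ is an \textit{Sz}-system, which is the assertion.

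I do not anticipate a genuine obstacle here: all the analytic content\,---\,the monotone approximation of $A\in\mathscr{Y}$ by members of the generating algebra $\bigcup_{\theta<\xi}\mathscr{X}_\theta$, together with the density-transfer packaged in Lemma~\ref{lem1.2}\,---\,has already been absorbed into Proposition~\ref{prop1.3}. The only point demanding care is the measure-theoretic identification $\mathscr{X}_\xi=\sigma\bigl(\bigcup_{\theta<\xi}\mathscr{X}_\theta\bigr)$ for the inverse limit, and this is standard precisely because the construction keeps the base space $X$ fixed and lets only the $\sigma$-algebra grow.
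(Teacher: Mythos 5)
Your proposal is correct and follows exactly the paper's own route: the paper's proof likewise consists of observing that, by the Structure Theorem, $\{\mathscr{X}_\theta;\theta<\xi\}$ is a totally ordered family of $\sigma$-subalgebras and then invoking Proposition~\ref{prop1.3}. You merely spell out the two points the paper leaves implicit (that property~$(b)$ with identity maps forces $\mathscr{X}_{\theta_1}\subseteq\mathscr{X}_{\theta_2}$, and that the inverse limit in~$(d)$ is realized as $\mathscr{X}_\xi=\sigma\bigl(\bigcup_{\theta<\xi}\mathscr{X}_\theta\bigr)$), which is a faithful elaboration rather than a different argument.
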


\begin{proof}
According to Structure Theorem, $\{\mathscr{X}_\theta; \theta<\xi\}$ is totally ordered. Then the statement follows from the foregoing Proposition~\ref{prop1.3}.
\end{proof}

By an argument similar to that of Corollary~\ref{cor1.4}, we can easily obtain the following

\begin{Lem}\label{lem1.5}
Let $\{\X_\xi; \xi\le\eta\}$ be a Furstenberg factors chain of a nontrivial standard Borel $G$-system $\X=G\curvearrowright_T(X,\mathscr{X},\mu)$. Assume that the ordinal $\xi$ is a limit ordinal and that $\X_\theta$ is a weak \textit{Sz}-system for each $\theta<\xi$. Then
$\X_\xi$ is a weak \textit{Sz}-system.
\end{Lem}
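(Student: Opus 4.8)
The plan is to follow the proof of Corollary~\ref{cor1.4} verbatim at the structural level: I would first prove a weak-\textit{Sz} analogue of Proposition~\ref{prop1.3} (the inverse limit of a totally ordered family of weak \textit{Sz}-systems is again a weak \textit{Sz}-system) and then invoke the Structure Theorem, by which $\{\mathscr{X}_\theta;\theta<\xi\}$ is totally ordered with $\mathscr{X}_\xi=\sigma\big{(}\bigcup_{\theta<\xi}\mathscr{X}_\theta\big{)}$. As a preliminary I would record the upper-density counterpart of Lemma~\ref{lem1.2}: for a fixed integer $m\ge1$, one has $\limsup_{n}\frac{1}{|F_n|}\int_{F_n}\int_Xg_1^{mt}1_A\dotsm g_l^{mt}1_Ad\mu dt>0$ if and only if there are $\delta,\varepsilon>0$ and a Borel set $\mathcal{H}\subseteq R$ with $\mathrm{D}^*(\mathcal{H}):=\limsup_n\frac{|\mathcal{H}\cap F_n|}{|F_n|}>\delta$ and $\int_Xg_1^{mt}1_A\dotsm g_l^{mt}1_Ad\mu>\varepsilon$ for every $t\in\mathcal{H}$. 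This follows from the same averaging estimate as in Lemma~\ref{lem1.2}, now bounding the contribution of $\mathcal{H}^c$ from above by $\varepsilon$.

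For the weak analogue of Proposition~\ref{prop1.3} I would take $A\in\mathscr{Y}$ with $\mu(A)>0$ and a finite set $F=\{g_1,\dotsc,g_l\}$, and, exactly as in Proposition~\ref{prop1.3}, pick an increasing sequence $A_n\in\bigcup_\theta\mathscr{X}_\theta$ with $A_n\subseteq A_{n+1}$ and $\|1_A-1_{A_n}\|_2<2^{-l}\epsilon_n$, $\epsilon_n\downarrow0$. I would then \emph{fix one} index $i_0$ large enough that $\mu(A_{i_0})>0$ (possible since $\mu(A_n)\to\mu(A)$); as $A_{i_0}$ lies in some $\mathscr{X}_{\theta_0}$ that is a weak \textit{Sz}-system, it carries a bound $M:=M(A_{i_0},F)$. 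The decisive point is that this single $M$ will also serve as the bound for $A$.

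Given any weak F{\o}lner sequence $\{F_n\}_1^\infty$, the weak \textit{Sz}-property of $A_{i_0}$ provides an $m$ with $1\le m\le M$ for which the corresponding $\limsup$ is positive; the upper-density lemma then yields $\delta,\varepsilon>0$ and a Borel set $\mathcal{H}$ with $\mathrm{D}^*(\mathcal{H})>\delta$ on which the integral for $A_{i_0}$ exceeds $\varepsilon$. Because the $A_i$ increase and the integrands are products of nonnegative functions, the same $\mathcal{H}$ works with $A_i$ in place of $A_{i_0}$ for all $i\ge i_0$. Writing $1_A=1_{A_i}+\psi_i$ with $\|\psi_i\|_\infty\le1$ and $\|\psi_i\|_2<2^{-l}\epsilon_i$, expanding the $l$-fold product and estimating each of the $2^l-1$ cross terms by H\"older's inequality together with the measure-preservation of each $g_j^{mt}$, I get $\int_Xg_1^{mt}1_A\dotsm g_l^{mt}1_Ad\mu>\varepsilon-\epsilon_i$ for $t\in\mathcal{H}$. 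Choosing $i\ge i_0$ with $\epsilon_i<\varepsilon/2$ and applying the sufficiency direction of the upper-density lemma gives a positive $\limsup$ for $A$ with this very $m\le M$. Since $\{F_n\}_1^\infty$ was arbitrary, $A$ has the weak \textit{Sz}-property with bound $M$, which proves the weak analogue of Proposition~\ref{prop1.3}; applying it to $\{\mathscr{X}_\theta;\theta<\xi\}$ proves Lemma~\ref{lem1.5}.

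I expect the only genuine obstacle, beyond routine approximation, to be the bookkeeping of the multiplier $m$. Unlike the \textit{Sz}-setting of Corollary~\ref{cor1.4}, here $m$ may depend on the F{\o}lner sequence yet must stay below a bound that is \emph{uniform} over all such sequences. The resolution is to anchor $M$ to a single fixed approximant $A_{i_0}$ rather than to the tail of $\{A_n\}$ used for the $L^2$-approximation, so that enlarging the approximation index $i$ only shrinks the error $\epsilon_i$ while leaving $M$, and the positive-density set $\mathcal{H}$, unchanged.
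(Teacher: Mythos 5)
Your proposal is correct and takes essentially the same route as the paper, whose entire proof of Lemma~\ref{lem1.5} is the remark that it follows ``by an argument similar to that of Corollary~\ref{cor1.4}'' --- i.e., a weak-\textit{Sz} analogue of Proposition~\ref{prop1.3} (with $\limsup$ and upper density in place of $\liminf$ and lower density) combined with the total ordering of $\{\mathscr{X}_\theta;\theta<\xi\}$ from the Structure Theorem. Your write-up in fact supplies the details the paper leaves implicit, most notably the one genuinely delicate point: anchoring the bound $M$ to a single fixed approximant $A_{i_0}$ so that it is uniform over all weak F{\o}lner sequences while the multiplier $m$ is allowed to vary with the sequence.
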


\section{\textit{Sz}-property of totally relatively weak-mixing extensions}\label{sec2}
Let $(G,\pmb{+})$ be an lcsc $R$-module with the zero element $\e$ and with the continuous scalar multiplication $(t,g)\mapsto tg$ of $R\times G$ to $G$.
Let there be any given a short factors series:
\begin{gather*}
\X=G\curvearrowright_T(X,\mathscr{X},\mu)\xrightarrow[]{\textit{Id}_X}\X^\prime=G\curvearrowright_T(X,\mathscr{X}^\prime,\mu)\xrightarrow[]{\pi}\Y=G\curvearrowright_S(Y,\mathscr{Y},\nu),
\end{gather*}
where $(X,\mathscr{X},\mu)$ is a standard Borel $G$-space so we can decompose $\mu=\int_Y\mu_yd\nu(y)$ over $(\Y,\pi)$.

By $\mathrm{D}\textit{-}\lim$ we denote the limit in density (cf.~\cite{Fur,Dai-pre2}).
The following lemma is \cite[Proposition~3.7]{Dai-pre2}.

\begin{Lem}[\cite{Dai-pre2}]\label{lem2.1}
Let $\pi\colon\X^\prime\rightarrow\Y$ be totally relatively weak-mixing for $G$ and let $g_1,\dotsc,g_l\in G$ with $g_i\not=\e$ and $g_i\not=g_j$ for $1\le i\not=j\le l$. If
$f_1,\dotsc,f_l\in\mathfrak{L}^\infty(X,\mathscr{X}^\prime,\mu)$, then in the weak topology of $\mathfrak{L}^2(X,\mathscr{X}^\prime,\mu)$, it holds that
\begin{gather*}
\mathrm{D}\textit{-}\lim_{t\in R} \left\{\prod_{i=1}^lT_{g_i}^tf_i-\prod_{i=1}^lT_{g_i}^tE_\mu(f_i|\pi^{-1}[\mathscr{Y}])\right\}=0
\end{gather*}
over any weak F{\o}lner sequence in $(R,+)$.
\end{Lem}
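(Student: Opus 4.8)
The plan is to prove the statement by induction on the number $l$ of functions, using the van der Corput trick for the density limit $\mathrm{D}\textit{-}\lim$ together with the defining correlation property of a totally relatively weak-mixing extension. I would first reduce, by telescoping, to the case of a single relatively mean-zero factor: writing $\tilde f_i=E_\mu(f_i|\pi^{-1}[\mathscr{Y}])$ and
$$\prod_{i=1}^lT_{g_i}^tf_i-\prod_{i=1}^lT_{g_i}^t\tilde f_i=\sum_{j=1}^l\Bigl(\prod_{i<j}T_{g_i}^tf_i\Bigr)T_{g_j}^t(f_j-\tilde f_j)\Bigl(\prod_{i>j}T_{g_i}^t\tilde f_i\Bigr),$$
it suffices to show that $\mathrm{D}\textit{-}\lim_{t\in R}\prod_{i=1}^lT_{g_i}^th_i=0$ weakly in $\mathfrak{L}^2$ whenever all $h_i\in\mathfrak{L}^\infty$ are bounded and at least one of them, say $h_{j_0}$, satisfies $E_\mu(h_{j_0}|\pi^{-1}[\mathscr{Y}])=0$. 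For $l=1$ this is exactly relative weak mixing of the single transformation $T_{g_1}$ (here $g_1\not=\e$), after splitting the test function into its $\mathscr{Y}$-measurable part (which pairs to zero against $h_1$) and its mean-zero part, so the base case is immediate.

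For the inductive step I set $u_t=\prod_{i=1}^lT_{g_i}^th_i$ and appeal to a van der Corput lemma for $\mathrm{D}\textit{-}\lim$: it is enough to prove $\mathrm{D}\textit{-}\lim_{s}\,\mathrm{D}\textit{-}\lim_{t}\langle u_{t+s},u_t\rangle=0$. Using the group law $T_{g_i}^{t+s}=T_{g_i}^t\circ T_{g_i}^s$ and measure preservation, I rewrite
$$\langle u_{t+s},u_t\rangle=\int_X\prod_{i=1}^lT_{g_i}^t\bigl(h_i\cdot T_{g_i}^sh_i\bigr)\,d\mu=\int_X w_k^{(s)}\prod_{i\not=k}T_{g_i-g_k}^tw_i^{(s)}\,d\mu,$$
where $w_i^{(s)}=h_i\cdot T_{g_i}^sh_i$ and I have stripped off $T_{g_k}^t$ for a fixed index $k\not=j_0$. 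Because the $g_i$ are distinct, the $l-1$ elements $g_i-g_k$ ($i\not=k$) are again nonzero and pairwise distinct, so the inductive hypothesis applies to this $(l-1)$-fold product and replaces each $w_i^{(s)}$ ($i\not=k$) by $E_\mu(w_i^{(s)}|\pi^{-1}[\mathscr{Y}])$ inside the inner $\mathrm{D}\textit{-}\lim_t$.

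It then remains to average over $s$. Since $k\not=j_0$, the relatively mean-zero function $h_{j_0}$ survives inside a conditional expectation $E_\mu(w_{j_0}^{(s)}|\pi^{-1}[\mathscr{Y}])=E_\mu\bigl(h_{j_0}\cdot T_{g_{j_0}}^sh_{j_0}\,\big|\,\pi^{-1}[\mathscr{Y}]\bigr)$, and the hypothesis that $\pi$ is \emph{totally} relatively weak-mixing is precisely what gives $\mathrm{D}\textit{-}\lim_{s}E_\mu\bigl(h_{j_0}\cdot T_{g_{j_0}}^sh_{j_0}\,\big|\,\pi^{-1}[\mathscr{Y}]\bigr)=0$ in $\mathfrak{L}^2(Y,\mathscr{Y},\nu)$ for the single element $g_{j_0}\not=\e$; since the remaining factors are uniformly bounded, this forces the $s$-averaged correlation to vanish and closes the induction. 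The main obstacle I anticipate is the bookkeeping that keeps the reduction admissible — verifying that the shifted elements $g_i-g_k$ stay nonzero and distinct and that the extension remains totally relatively weak-mixing for them, so that both the inductive hypothesis and the final weak-mixing average apply — together with justifying the van der Corput lemma and the two nested $\mathrm{D}\textit{-}\lim$'s over an \emph{arbitrary} weak F{\o}lner sequence, which here carries none of the Tempelman or Shulman regularity that such density arguments usually rely on.
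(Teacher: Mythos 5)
A point of order first: this paper does not prove Lemma~\ref{lem2.1} at all. The sentence immediately preceding it reads ``The following lemma is \cite[Proposition~3.7]{Dai-pre2}'', so the result is imported from Part~II of the series and there is no internal argument to compare yours against. What can be said is that your scheme --- telescoping to a single relatively mean-zero factor, van der Corput induction on $l$, and a single-element relative weak-mixing statement to close the induction --- is the classical Furstenberg--Katznelson route by which such propositions are proved, and it is consistent with how the present paper uses the lemma: Corollary~\ref{cor2.4} records precisely the conditional-expectation ($L^2(\nu)$-norm) form of the conclusion, and Lemma~\ref{lem2.2} removes the hypothesis $g_i\not=\e$ by the same shifting trick your reduction would need.

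That said, there is one genuine gap in your write-up, at the step you yourself flag: the ``van der Corput lemma for $\mathrm{D}\textit{-}\lim$'' you appeal to is not the standard lemma, and in the form stated it is insufficient. The Hilbert-space van der Corput inequality (Bergelson's version, adapted to integrals over a weak F{\o}lner sequence, which only uses shift-invariance and so is fine without Tempelman/Shulman regularity) concludes that the Ces\`aro averages $\frac{1}{|F_N|}\int_{F_N}u_t\,dt$ tend to $0$ in norm; this does \emph{not} imply $\mathrm{D}\textit{-}\lim_t u_t=0$ in the weak topology. (Already $u_n=(-1)^n\psi$ has norm-vanishing Ces\`aro averages while $\langle u_n,\psi\rangle=\pm\|\psi\|^2$ has no density limit $0$.) Two standard repairs exist: (i) fix $\psi$, run van der Corput on the twisted sequence $v_t=\overline{\langle u_t,\psi\rangle}\,u_t$, noting $\langle v_t,\psi\rangle=|\langle u_t,\psi\rangle|^2$, $|\langle v_{t+s},v_t\rangle|\le C\,|\langle u_{t+s},u_t\rangle|$, and invoke Koopman--von Neumann to convert the resulting Ces\`aro statement into a density statement; or (ii) prove instead the stronger assertion $\mathrm{D}\textit{-}\lim_t\|E_\mu(\prod_iT_{g_i}^th_i\,|\,\pi^{-1}[\mathscr{Y}])\|_{L^2(\nu)}=0$ and then recover the weak-topology statement by absorbing the test function into the product via a shift $g_0$, exactly as Lemma~\ref{lem2.2} is derived from Lemma~\ref{lem2.1}. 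Your correlation bound after the inductive replacement, $|\langle u_{t+s},u_t\rangle|\le C\,\|E_\mu(h_{j_0}T_{g_{j_0}}^sh_{j_0}\,|\,\pi^{-1}[\mathscr{Y}])\|_{2,\nu}+o_t(1)$ in density, is correct and feeds into either repair, so the gap is fixable rather than fatal. A secondary caveat: both your base case and your closing step treat the functional characterization of relative weak mixing (namely $E_\mu(h|\pi^{-1}[\mathscr{Y}])=0\Rightarrow\mathrm{D}\textit{-}\lim_s\|E_\mu(h\,T_g^sh\,|\,\pi^{-1}[\mathscr{Y}])\|_{2,\nu}=0$) as if it were the definition; if, as in the cited papers, the definition is relative ergodicity of the relative product $\X^\prime\times_\Y\X^\prime$ for each one-parameter action $t\mapsto T_g^t$ with $g\not=\e$, this characterization is itself a lemma requiring the mean ergodic theorem on the relative product along weak F{\o}lner sequences, and should be stated and proved (or cited) as such.
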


We can restate this lemma as the following more convenient version for our later arguments:

\begin{Lem}\label{lem2.2}
Let $\pi\colon\X^\prime\rightarrow\Y$ be totally relatively weak-mixing for $G$ and let $g_1,\dotsc,g_l\in G$ with $g_i\not=g_j$ for $1\le i\not=j\le l$. If
$\psi_1,\dotsc,\psi_l\in\mathfrak{L}^\infty(X,\mathscr{X}^\prime,\mu)$, then in the weak topology of $\mathfrak{L}^2(X,\mathscr{X}^\prime,\mu)$, it holds that
\begin{gather*}
\mathrm{D}\textit{-}\lim_{t\in R} \left\{\prod_{i=1}^lT_{g_i}^t\psi_i-\prod_{i=1}^lT_{g_i}^tE_\mu(\psi_i|\pi^{-1}[\mathscr{Y}])\right\}=0
\end{gather*}
over any weak F{\o}lner sequence in $(R,+)$.
\end{Lem}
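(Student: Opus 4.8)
The plan is to derive Lemma~\ref{lem2.2} straight from Lemma~\ref{lem2.1}: the only genuine change is that the hypothesis $g_i\neq\e$ has been suppressed, so that one coordinate may now be shifted by the zero element $\e$. I would open with two elementary observations. Since both families $\prod_{i=1}^lT_{g_i}^t\psi_i$ and $\prod_{i=1}^lT_{g_i}^tE_\mu(\psi_i|\pi^{-1}[\mathscr{Y}])$ are \emph{pointwise} products, they are symmetric in the index $i$ and I may permute the coordinates freely; and since $g_i\neq g_j$ whenever $i\neq j$, at most one of $g_1,\dotsc,g_l$ can equal $\e$.

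If no $g_i$ equals $\e$, then Lemma~\ref{lem2.1} applies verbatim with $f_i=\psi_i$ and there is nothing to do. Otherwise I permute the indices so that $g_l=\e$, leaving $g_1,\dotsc,g_{l-1}$ nonzero and pairwise distinct. Now $T_\e^t=T_{t\e}=\mathrm{Id}_X$ for every $t\in R$, so the $l$-th coordinate is inert: it contributes the single $t$-independent bounded function $\psi_l$ to the first product and, unchanged, its conditional expectation $E_\mu(\psi_l|\pi^{-1}[\mathscr{Y}])$ to the second. I would pull this inert coordinate outside the density limit and apply Lemma~\ref{lem2.1} to the remaining $l-1$ coordinates, obtaining
\[
\mathrm{D}\text{-}\lim_{t\in R}\left\{\prod_{i=1}^{l-1}T_{g_i}^t\psi_i-\prod_{i=1}^{l-1}T_{g_i}^tE_\mu(\psi_i|\pi^{-1}[\mathscr{Y}])\right\}=0
\]
weakly in $\mathfrak{L}^2(X,\mathscr{X}^\prime,\mu)$; reinstating the inert factor on each side then recovers the two full products and completes the reduction.

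The one step that is not mere bookkeeping, and which I expect to be the crux, is the extraction of the inert $\e$-coordinate across the weak density limit: I must push multiplication by the fixed function $\psi_l$ inside $\mathrm{D}\text{-}\lim$. This is permissible because multiplication by a fixed element of $\mathfrak{L}^\infty(X,\mathscr{X}^\prime,\mu)$ is a bounded linear operator on $\mathfrak{L}^2(X,\mathscr{X}^\prime,\mu)$, hence continuous from the weak topology to itself, so it carries a net that is weakly null in density to another such net. It is essential that the extracted coordinate be genuinely independent of $t$, since weak convergence is emphatically \emph{not} stable under products of two varying factors; once that is recognized, Lemma~\ref{lem2.2} is exactly Lemma~\ref{lem2.1} with a single inert coordinate carried along.
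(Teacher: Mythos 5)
Your reduction breaks at the step you yourself flag as the crux, but for a reason you did not anticipate: the $\e$-coordinate is \emph{not} a common inert factor of the two products. It contributes $\psi_l$ to the first product but $E_\mu(\psi_l|\pi^{-1}[\mathscr{Y}])$ to the second. Writing $P_t=\prod_{i=1}^{l-1}T_{g_i}^t\psi_i$ and $Q_t=\prod_{i=1}^{l-1}T_{g_i}^tE_\mu(\psi_i|\pi^{-1}[\mathscr{Y}])$, the quantity to be controlled is
\begin{equation*}
P_t\,\psi_l-Q_t\,E_\mu(\psi_l|\pi^{-1}[\mathscr{Y}])=(P_t-Q_t)\,\psi_l+Q_t\bigl(\psi_l-E_\mu(\psi_l|\pi^{-1}[\mathscr{Y}])\bigr).
\end{equation*}
Your argument (Lemma~\ref{lem2.1} for the $l-1$ nonzero coordinates, plus weak-to-weak continuity of multiplication by the fixed function $\psi_l$ --- that part is correct) disposes of the first term only. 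The second term does not tend to $0$: take $l=2$ and $\psi_1\equiv1$, so that $Q_t\equiv1$; the leftover is then the $t$-independent function $\psi_2-E_\mu(\psi_2|\pi^{-1}[\mathscr{Y}])$, which is nonzero whenever the extension is nontrivial, and a fixed nonzero vector cannot have weak limit $0$ in density. This example shows, in fact, that no argument along your lines (or any other) can close the gap as the statement is formulated: in the full weak topology of $\mathfrak{L}^2(X,\mathscr{X}^\prime,\mu)$, the asserted convergence genuinely fails when one of the $g_i$ equals $\e$ and arbitrary test functions are allowed.

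The paper's own proof takes a different, global route: when some $g_i=\e$, it replaces \emph{all} the $g_i$ by $g_0\pmb{+}g_i$ for one common $g_0$ with $g_0\not=-g_i$ for every $i$ (the shifted elements are nonzero and still pairwise distinct), applies Lemma~\ref{lem2.1} to the shifted family, and then appeals to the $T_{g_0}^t$-invariance of $\mu$. Since $\prod_iT_{g_0\pmb{+}g_i}^t\psi_i=T_{g_0}^t\bigl(\prod_iT_{g_i}^t\psi_i\bigr)$, and likewise for the conditional-expectation product, the shifted difference equals $T_{g_0}^t$ applied to the original one; invariance of $\mu$ (and, fibrewise, of $\nu$) then makes the integrated quantities used downstream --- $\int_X\prod_iT_{g_i}^t1_A\,d\mu$ in Corollary~\ref{cor2.3}, and the fibre integrals with their $\nu$-measure bounds in Corollary~\ref{cor2.4} --- identical for the shifted and unshifted families, which is what those later arguments actually require. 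That common-shift mechanism is precisely what a coordinate-by-coordinate reduction cannot reproduce. (One should note that even this transfer yields the lemma only in such an integrated reading: a unitary $T_{g_0}^t$ varying with $t$ does not carry weak convergence in density back to the unshifted difference, consistent with the counterexample above.)
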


\begin{proof}
If $g_i\not=\e$ for all $1\le i\le l$, then this is just Lemma~\ref{lem2.1}. If not, replace all the $g_i$ by some $g_0\pmb{+}g_i$ where ${g_0}\not=-g_i$ for any $1\le i\le l$.
Since $\mu$ is $g_0^t$-invariant for each $t\in R$, this implies the desired statement.
\end{proof}

This lemma makes it evident that a totally relatively weak-mixing extension of an \textit{Sz}-factor is an \textit{Sz}-system. That is the following

\begin{cor}\label{cor2.3}
If $\X^\prime=G\curvearrowright_T(X,\mathscr{X}^\prime,\mu)$ is a totally relatively weak-mixing extension of an (resp.~a weak) \textit{Sz}-system $\Y=G\curvearrowright_S(Y,\mathscr{Y},\nu)$, then $\X^\prime$ is an (resp.~a weak) \textit{Sz}-system.
\end{cor}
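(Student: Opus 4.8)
The plan is to lift the \textit{Sz}-property from the factor $\Y$ up to $\X'$ by replacing each function on $X$ with its conditional expectation onto $\pi^{-1}[\mathscr{Y}]$, the resulting error being density-negligible by virtue of Lemma~\ref{lem2.2}. Fix $A\in\mathscr{X}'$ with $\mu(A)>0$ and elements $g_1,\dots,g_l\in G$. Since each Koopman operator $g^t$ is multiplicative and $1_A^2=1_A$, the integrand $\prod_{i=1}^l g_i^t1_A$ depends only on the distinct values among $g_1,\dots,g_l$, so I would first discard repetitions and assume the $g_i$ pairwise distinct (the case of a single surviving element being trivial, as $\int_X g^t1_A\,d\mu=\mu(A)$). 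Put $\bar f=E_\mu(1_A\,|\,\pi^{-1}[\mathscr{Y}])$; being $\pi^{-1}[\mathscr{Y}]$-measurable it factors as $\bar f=\tilde f\circ\pi$ for some Borel $\tilde f$ on $Y$ with $0\le\tilde f\le1$ and $\int_Y\tilde f\,d\nu=\int_X\bar f\,d\mu=\mu(A)>0$.

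The engine is Lemma~\ref{lem2.2}: applied to the distinct $g_1,\dots,g_l$ with every $\psi_i=1_A$ it gives, in the weak topology of $\mathfrak{L}^2(X,\mathscr{X}',\mu)$,
\[
\mathrm{D}\textit{-}\lim_{t\in R}\Bigl\{\prod_{i=1}^l g_i^t1_A-\prod_{i=1}^l g_i^t\bar f\Bigr\}=0.
\]
Pairing this weak limit against the constant function $1\in\mathfrak{L}^2$ collapses it to the scalar density statement
\[
\mathrm{D}\textit{-}\lim_{t\in R}\Bigl\{\int_X\prod_{i=1}^l g_i^t1_A\,d\mu-\int_X\prod_{i=1}^l g_i^t\bar f\,d\mu\Bigr\}=0.
\]
As both integrands lie in $[0,1]$, the difference is bounded, and density-convergence to $0$ of a bounded function forces vanishing of its F{\o}lner averages; hence $\liminf_{n}\frac{1}{|F_n|}\int_{F_n}(\cdots)\,dt$ is unchanged when $1_A$ is replaced by $\bar f$. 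Finally the equivariance of $\pi$ yields $\int_X\prod_i g_i^t\bar f\,d\mu=\int_Y\prod_i S_{g_i}^t\tilde f\,d\nu$, and since $\Y$ is an \textit{Sz}-system and $\tilde f\ge0$ with $\int_Y\tilde f\,d\nu>0$, this last $\liminf$ is strictly positive. Chaining the three steps establishes the \textit{Sz}-property for $\X'$.

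The weak \textit{Sz}-case runs along the same lines: for the dilation $1\le m\le M$ furnished by the weak \textit{Sz}-property of $\Y$ applied to $\tilde f$, one applies Lemma~\ref{lem2.2} to the dilated elements $mg_1,\dots,mg_l$ and transfers the corresponding $\limsup$ of F{\o}lner averages from $\Y$ back to $\X'$, yielding the same bound $M=M(A,F)$. The only extra bookkeeping is that Lemma~\ref{lem2.2} again demands distinctness, now of the $mg_i$; as before this is harmless for the indicator $1_A$, since repeated factors collapse under idempotency, so one merges them on the $X$-side and carries the surviving distinct elements consistently through the conditional-expectation step.

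The main obstacle, and essentially the only point requiring care, is the middle passage: Lemma~\ref{lem2.2} delivers convergence only in the \emph{weak} topology and only in \emph{density}, whereas the \textit{Sz}-property is a statement about F{\o}lner \emph{averages} of scalars. Bridging this gap rests on two observations I would make explicit\,---\,that testing against the single vector $1$ already extracts the scalar identity (no uniformity over test vectors is needed, because we only integrate the product), and that for uniformly bounded integrands $\mathrm{D}\textit{-}\lim=0$ implies vanishing of the F{\o}lner averages, so that passing from $1_A$ to $\bar f$ perturbs neither the $\liminf$ nor the $\limsup$. Once these are in place, the positivity is simply inherited from the \textit{Sz}-property of $\Y$ via $\tilde f$.
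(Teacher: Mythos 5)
Your proposal is correct and follows essentially the same route as the paper: both arguments apply Lemma~\ref{lem2.2} with every $\psi_i=1_A$, pass through the conditional expectation $E_\mu(1_A\,|\,\pi^{-1}[\mathscr{Y}])$ to inherit positivity from the \textit{Sz}-property of $\Y$ (in the functional form of Definition~\ref{def1.1}), and dispose of repeated $g_i$ via idempotency of indicators. The only cosmetic difference is the bridge from density-convergence to F{\o}lner averages: the paper routes through the good-set-of-times characterization of Lemma~\ref{lem1.2}, whereas you pair the weak $\mathrm{D}$-limit against the constant function $1$ and use boundedness of the integrands to make the averages of the error vanish; both bridges are valid.
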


\begin{proof}
This follows immediately from Def.~\ref{def1.1}, Lemma~\ref{lem1.2} and Lemma~\ref{lem2.2}. Indeed, given any $A\in\mathscr{X}^\prime$ with $\mu(A)>0$ and any distinct elements $g_1,\dotsc,g_l\in G$, over any asymptotic F{\o}lner sequence $\{F_n\}_1^\infty$ in $(R,+)$ we can take $\delta>0,\varepsilon>0$ such that
\begin{gather*}
\left\{t\in R\colon \int_YS_{g_1}^tE_\mu(1_A|\Y)\dotsm S_{g_l}^tE_\mu(1_A|\Y)d\nu>2\varepsilon\right\}
\end{gather*}
has lower density $\ge\delta$ over $\{F_n\}_1^\infty$. Then by Lemma~\ref{lem2.2}, it follows that
\begin{gather*}
\left\{t\in R\colon \int_XT_{g_1}^t1_A\dotsm T_{g_l}^t1_A d\mu>\varepsilon\right\}
\end{gather*}
has lower density $\ge\delta$ over $\{F_n\}_1^\infty$. If $g_1=g_2$ then $T_{g_1}^t1_AT_{g_2}^t1_A=T_{g_1}^t1_A$ and so the above argument is still valid for any $g_1,\dotsc,g_l\in G$. This proves Corollary~\ref{cor2.3}.
\end{proof}

Another important consequence of Lemma~\ref{lem2.2} is the following, which is a generalization of \cite[Lemma~7.9]{Fur}.

\begin{cor}\label{cor2.4}
Let $\X^\prime=G\curvearrowright_T(X,\mathscr{X}^\prime,\mu)$ be a totally relatively weak-mixing extension of the $G$-system $\Y=G\curvearrowright_S(Y,\mathscr{Y},\nu)$ and let $g_1,\dotsc,g_l\in G$ with $g_i\not=g_j$ for $1\le i\not=j\le l$. Let
$\psi_1,\dotsc,\psi_l\in\mathfrak{L}^\infty(X,\mathscr{X}^\prime,\mu)$. Then for any $\delta>0$ and $\varepsilon>0$,
\begin{gather*}
\left\{t\in R\,\big{|}\nu\left\{y\in Y\colon\left|\int_X{\prod}_{i=1}^lT_{g_i}^t\psi_i\,d\mu_y-{\prod}_{i=1}^lE_\mu(\psi_i|\Y)(S_{g_i}^ty)\right|>\varepsilon\right\}>\delta\right\}
\end{gather*}
is of density $0$ over any weak F{\o}lner sequence $\{F_n\}_1^\infty$ in $(R,+)$.
\end{cor}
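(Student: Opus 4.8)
The plan is to reduce the fibered estimate to a single scalar $\mathrm{D}\textit{-}\lim$ on the relative product system $\X'\times_\Y\X'$ and then invoke Lemma~\ref{lem2.2} there. Throughout write $\phi_i=E_\mu(\psi_i|\Y)$, $P_t=\prod_{i=1}^lT_{g_i}^t\psi_i$, $Q_t=\prod_{i=1}^lT_{g_i}^t\phi_i$ and $\Phi_t=P_t-Q_t$. First I would identify the bracketed quantity as a fiber integral. Since each $\phi_i$ is $\pi^{-1}[\mathscr{Y}]$-measurable, equivariance of $\pi$ gives $(T_{g_i}^t\phi_i)(x)=\phi_i(S_{g_i}^t\pi(x))$, so $Q_t$ is constant on each fiber $\pi^{-1}(y)$ with value $\prod_i\phi_i(S_{g_i}^ty)$. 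Hence $\int_XQ_t\,d\mu_y=\prod_iE_\mu(\psi_i|\Y)(S_{g_i}^ty)$, and the expression under the absolute value is precisely $G_t(y):=\int_X\Phi_t\,d\mu_y$. By Markov's inequality $\nu\{y:|G_t(y)|>\varepsilon\}\le\varepsilon^{-2}\int_Y|G_t|^2\,d\nu$, so the set in the statement is contained in $\{t:\int_Y|G_t|^2\,d\nu>\delta\varepsilon^2\}$; it therefore suffices to show $\mathrm{D}\textit{-}\lim_{t}\int_Y|G_t|^2\,d\nu=0$ over the given weak F{\o}lner sequence, since a subset of a density-zero set is density-zero.

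Next I would pass to the relative product. Using the disintegration $\mu=\int_Y\mu_y\,d\nu$ and the fiber product measure $\bar\mu=\int_Y\mu_y\times\mu_y\,d\nu$ on $X\times_YX$, one has $\int_Y|G_t|^2\,d\nu=\int_{X\times_YX}\Phi_t\otimes\overline{\Phi_t}\,d\bar\mu$, where $(\Phi_t\otimes\overline{\Phi_t})(x,x')=\Phi_t(x)\overline{\Phi_t(x')}$. Expanding $\Phi_t\otimes\overline{\Phi_t}=P_t\otimes\overline{P_t}-P_t\otimes\overline{Q_t}-Q_t\otimes\overline{P_t}+Q_t\otimes\overline{Q_t}$ produces four terms, each of the form $\int_{X\times_YX}\prod_{i=1}^l\tilde T_{g_i}^t\Xi_i\,d\bar\mu$ for the diagonal action $\tilde T_g=T_g\times T_g$, where $\Xi_i$ ranges over $\psi_i\otimes\overline{\psi_i}$, $\psi_i\otimes\overline{\phi_i}$, $\phi_i\otimes\overline{\psi_i}$, $\phi_i\otimes\overline{\phi_i}$ (the $\phi_i$ being viewed on $X$ by pullback). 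A direct fiberwise computation with $\bar\mu$ shows that \emph{all four} choices have the same conditional expectation onto $\Y$, namely $E_{\bar\mu}(\Xi_i|\Y)=|\phi_i|^2$.

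The crux is then to apply Lemma~\ref{lem2.2} to $\X'\times_\Y\X'$. This requires the key fact that the relative product $\X'\times_\Y\X'\to\Y$ of a totally relatively weak-mixing extension is again totally relatively weak-mixing over $\Y$ (a standard preservation property; cf.~\cite{Fur,Dai-pre2}), so that Lemma~\ref{lem2.2} legitimately applies there with the distinct elements $g_1,\dots,g_l$. Pairing the resulting weak $\mathrm{D}$-limit with the constant function $1\in\mathfrak{L}^2(\bar\mu)$ gives, for each of the four terms,
\[
\mathrm{D}\textit{-}\lim_{t}\left\{\int_{X\times_YX}\prod_{i=1}^l\tilde T_{g_i}^t\Xi_i\,d\bar\mu-\int_Y\prod_{i=1}^l\bigl|\phi_i(S_{g_i}^ty)\bigr|^2\,d\nu(y)\right\}=0.
\]
Since the reference integral $\int_Y\prod_i\bigl|\phi_i(S_{g_i}^ty)\bigr|^2\,d\nu$ is identical for all four terms, and they enter $\Phi_t\otimes\overline{\Phi_t}$ with signs $+,-,-,+$ whose sum is $0$, these reference integrals cancel, yielding $\mathrm{D}\textit{-}\lim_t\int_Y|G_t|^2\,d\nu=0$, which completes the reduction.

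The main obstacle is the relative-product step: one must be certain that total relative weak-mixing is inherited by $\X'\times_\Y\X'$, for this is exactly what makes the fixed test function $1$ available and lets Lemma~\ref{lem2.2} convert the moving, $t$-dependent conditional expectations into a single cancelling reference quantity. Everything else—identifying the bracket as $G_t(y)$, the Markov reduction, and the bookkeeping with the disintegration—is routine.
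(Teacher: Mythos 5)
Your proposal is correct and is essentially the argument the paper intends: the paper presents Corollary~\ref{cor2.4} as a direct consequence of Lemma~\ref{lem2.2} generalizing \cite[Lemma~7.9]{Fur}, whose classical proof is exactly your reduction via Chebyshev's inequality, the expansion of $\int_Y|G_t|^2\,d\nu$ over the relative product $\X^\prime\times_\Y\X^\prime$, and the sign cancellation of the four reference integrals. The one external input you need---that $\X^\prime\times_\Y\X^\prime\rightarrow\Y$ inherits total relative weak-mixing---is the same standard preservation fact underlying Furstenberg's proof, and you identify and cite it appropriately.
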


\section{Primitive extensions}\label{sec3}
Let $(G,\pmb{+})$ be an lcsc $R$-module if no an explicit declaration. As before, write $T_{tg}=T_g^t=g^t$ and $T_{g\pmb{+}h}=g\circ h$ for any $t\in R$ and $g,h\in G$. Let
$$
\X=G\curvearrowright_T(X,\mathscr{X},\mu)\xrightarrow[]{\textit{Id}_X}\X^\prime=G\curvearrowright_T(X,\mathscr{X}^\prime,\mu)\xrightarrow[]{\pi}\Y=G\curvearrowright_S(Y,\mathscr{Y},\nu)
$$
be any short factors series, where $(X,\mathscr{X},\mu)$ is a standard Borel probability space. Let
$$\mu=\int_Y\mu_yd\nu(y)$$
be the standard disintegration of $\mu$, over $\pi\colon\X\rightarrow\Y$, and simply write $\|\cdot\|_{2,y}=\|\cdot\|_{2,\mu_y}$ for $\nu$-\textit{a.e. }$y\in Y$.
As usual we shall say $\pi\colon\X^\prime\rightarrow\Y$ is \textbf{\textit{primitive}} if there exists direct sum
\begin{gather*}
G=G_{rc}\oplus G_{rw}
\end{gather*}
of nontrivial $R$-submodules of $G$ so that $\pi\colon\X^\prime\rightarrow\Y$ is relatively compact for $G_{rc}$ and totally relatively weak-mixing for $G_{rw}$.  Particularly, if $G_{rc}=G$, we say $\pi\colon\X^\prime\rightarrow\Y$ is a \textbf{\textit{relatively compact}} extension of $\Y$.
See \cite{FK,Fur} and \cite[$\S4$]{Dai-pre}.

We need a finitary version of the van der Waerden theorem for any $R$-module as follows.

\begin{Lem}[\cite{Dai-15}]\label{lem3.1}
Let $G_1\subset G_2\subset\dotsm\subset G_n\subset\dotsm$ be a sequence subsets of an $R$-semimodule $(G,\pmb{+})$ with $G=\bigcup_nG_n$; and let $F$ be a finite set of $G$ and $q\in \mathbb{N}$. Then there exists a number $N=N(q,F)$ such that whenever $n\ge N$ and $G_n=B_1\cup\dotsm\cup B_q$ is a partition of $G$ into $q$ sets, one of these $B_j$ contains a homothetic copy of $F$, $\{g_0^{}\pmb{+}rf\,|\,f\in F\}$, where $g_0^{}\in G$ and $r\in R$ with $r\not=0$.
\end{Lem}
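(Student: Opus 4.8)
The plan is to reduce the statement to the finitary Hales--Jewett theorem, which I will quote as a black box and which packages all of the genuine combinatorics in a form uniform over the ambient semimodule. Fix $q\in\mathbb{N}$ and enumerate $F=\{a_1,\dotsc,a_k\}$ with $k=|F|$. Let $N$ be the Hales--Jewett number for alphabet size $k$ and $q$ colors, so that every $q$-coloring of the combinatorial cube $\{1,\dotsc,k\}^N$ contains a monochromatic combinatorial line. Introduce the summation map
\[
\Phi\colon\{1,\dotsc,k\}^N\rightarrow G,\qquad \Phi(w)=a_{w_1}\pmb{+}\dotsm\pmb{+}a_{w_N},
\]
built from the semimodule addition $\pmb{+}$. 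Its image is a finite subset of $G$, and since $G=\bigcup_nG_n$ with the $G_n$ increasing, there is an index $n_0$ with $\Phi(\{1,\dotsc,k\}^N)\subseteq G_{n_0}$. I claim that $N(q,F):=n_0$ has the required property.

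First I would verify the reduction. Take any $n\ge n_0$ and any partition $G_n=B_1\cup\dotsm\cup B_q$. Because $\Phi$ lands inside $G_{n_0}\subseteq G_n$, pulling the partition back along $\Phi$ defines a $q$-coloring of $\{1,\dotsc,k\}^N$. By Hales--Jewett this coloring admits a monochromatic combinatorial line, that is, a nonempty set $S\subseteq\{1,\dotsc,N\}$ of active coordinates together with fixed values $t_i$ for $i\notin S$, such that the $k$ words $w^{(j)}$ given by $w^{(j)}_i=j$ for $i\in S$ and $w^{(j)}_i=t_i$ for $i\notin S$ all lie in a single block. Applying $\Phi$ and collecting the active coordinates gives
\[
\Phi(w^{(j)})=\Big(\sum_{i\notin S}a_{t_i}\Big)\pmb{+}\Big(\underbrace{a_j\pmb{+}\dotsm\pmb{+}a_j}_{|S|}\Big)=g_0\pmb{+}ra_j,
\]
where $g_0:=\sum_{i\notin S}a_{t_i}$ and $r:=|S|\cdot1_R$. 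Hence $\{g_0\pmb{+}ra_j\colon 1\le j\le k\}=g_0\pmb{+}rF$ is a homothetic copy of $F$ contained entirely in one block $B_{j_0}$, which is exactly the asserted conclusion.

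The one point that needs real care---and the step I expect to be the main obstacle---is the non-degeneracy requirement $r\neq0$ in $R$. In the construction $r=|S|\cdot1_R$ with $|S|\ge1$, which is nonzero precisely when the characteristic of $R$ does not divide $|S|$. For every ring relevant to this paper ($\mathbb{Z},\mathbb{Q},\mathbb{R},\mathbb{Z}_p,\mathbb{Q}_p$, and more generally any $R$ of characteristic zero) this holds automatically, so the argument above is complete in all cases of interest. For a genuinely arbitrary $R$-semimodule one extra maneuver is needed: either invoke a length-controlled refinement of Hales--Jewett yielding an active set $S$ with $|S|$ prescribed modulo $\mathrm{char}\,R$, or first pass to the quotient identifying the directions killed by $1_R$-scaling. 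I stress that the Hales--Jewett theorem supplies the entire combinatorial engine, so no multiple-recurrence input is required; this is what makes $N(q,F)$ explicit and keeps the proof logically independent of (and hence available toward) the later recurrence results.

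As an alternative closer to the spirit of the paper, one could instead establish the infinitary $R$-module Gallai--Witt statement---every finite coloring of $G$ contains a monochromatic translate--dilate $g_0\pmb{+}rF$---by applying the topological multiple Birkhoff recurrence theorem to the shift action of $G$ on $\{1,\dotsc,q\}^G$ together with the $R$-dilation structure, and then recover the finitary form by a Tychonoff compactness (diagonal) argument along the exhaustion $\{G_n\}$. This mirrors the Furstenberg--Weiss method underlying the rest of the paper, but the direct Hales--Jewett route is shorter and avoids any circularity with Theorem~\ref{thm0.1}.
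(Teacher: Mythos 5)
First, a point of comparison: the paper contains no proof of Lemma~\ref{lem3.1} at all --- it is quoted from the companion work \cite{Dai-15} --- so your argument has to stand on its own; judging from that paper's title, your direct Hales--Jewett reduction is also a genuinely different route from the van der Waerden/Gr\"unwald--type development carried out there. On its own terms, your reduction is the classical Gallai--Witt argument, and it is correct precisely when $m\cdot 1_R\neq 0$ for every positive integer $m$: the map $\Phi$, the pulled-back coloring, the monochromatic line, the identification $r=|S|\cdot 1_R$, and the threshold $N(q,F)=n_0$ (whose dependence on the fixed exhaustion $\{G_n\}$ is unavoidable and harmless) are all sound. Moreover, this characteristic-zero case covers the only way the present paper consumes the lemma: in the proof of Corollary~\ref{cor3.3} it is applied to a countable subgroup $G'$ regarded as a $\mathbb{Z}_+$-semimodule, so the dilate is a positive integer and is automatically nonzero as a scalar.

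Measured against the statement as written, however --- an arbitrary $R$-semimodule with conclusion $r\in R$, $r\neq 0$ --- the gap you flag is genuine, and neither of your two proposed patches can close it. There is no ``length-controlled'' Hales--Jewett theorem forcing $|S|\not\equiv 0\pmod p$: color $\{1,\dotsc,k\}^N$ by $w\mapsto\#\{i\colon w_i=1\}\bmod p$; comparing the point of a combinatorial line with letter $1$ on the active set $S$ against any other point of that line shows every monochromatic line satisfies $p\mid |S|$, so no strengthening of the theorem can prescribe $|S|$ to be a unit mod $p$. The quotient patch is vacuous: in characteristic $p$ one has $pg=(p\cdot 1_R)g=\e$ for \emph{every} $g\in G$, so the ``directions killed'' are everything; more fundamentally, summing over active coordinates can only ever produce dilates in the prime subring $\mathbb{Z}\cdot 1_R$, whereas the statement demands $r$ ranging over all of $R$. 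In fact the statement is false in the generality quoted: take $R=\mathbb{F}_2$, $G=\bigoplus_{i\ge 1}\mathbb{F}_2$, $F=\{\e,e_1,e_2\}$; the only nonzero dilate is $r=1$, and the weight-parity coloring $g\mapsto(\text{Hamming weight of }g)\bmod 2$ admits no monochromatic translate of $F$, whatever the exhaustion. So \cite{Dai-15} must operate under hypotheses not visible in the quotation, and the defensible version of your result is exactly what you actually proved: the lemma for scalars satisfying $m\cdot 1_R\neq0$ for all $m\ge1$, equivalently the $\mathbb{Z}_+$-version that Corollary~\ref{cor3.3} needs. Delete the two patches; if one wants genuinely ring-valued dilates, your concluding dynamical sketch, not Hales--Jewett, is the only viable route.
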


Given any $g\in G$ and $m\in\mathbb{N}$, set $mg=(\stackrel{m\textit{-times}}{\overbrace{1+\dotsm+1}})g$, where $1$ is the identity element of $(R,+,\cdot)$. For our convenience we now introduce a notation.

\begin{defn}
We shall say an $R$-module $G$ is \textit{irreducible} relative to a subset $F$ of $G$ provided that if $mf\not=\e$ for any nonzero $f\in F$ and any $m\in\mathbb{N}$.
\end{defn}

Lemma~\ref{lem3.1} leads to the following useful result, which generalizes \cite[Lemma~7.11]{Fur}.

\begin{cor}\label{cor3.3}
Let $K\in\mathbb{N}$ and $F=\{g_1,g_2,\dotsc,g_l\}\subset G$ be any given, where $G$ is an irreducible $R$-module relative to $F$. Then there is a finite subset $Q\subset G$ and an integer $M\ge1$, such that for any coloring map
\begin{gather*}
k\colon G\rightarrow\{1,2,\dotsc,K\}
\end{gather*}
there exists some $g^\prime\in Q$ and some $m\in\mathbb{N}, 1\le m\le M$, such that
\begin{gather*}
k(g^\prime\pmb{+}mg_1)=k(g^\prime\pmb{+}mg_2)=\dotsm=k(g^\prime\pmb{+}mg_l).
\end{gather*}
\end{cor}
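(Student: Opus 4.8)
The plan is to derive Corollary~\ref{cor3.3} from the finitary van der Waerden theorem (Lemma~\ref{lem3.1}) by setting up an exhausting chain of finite subsets of $G$ and then transferring a $K$-coloring into a $q$-partition to which Lemma~\ref{lem3.1} applies. First I would fix the irreducible-relative-to-$F$ hypothesis and the finite set $F=\{g_1,\dots,g_l\}$, and I would build a sequence $G_1\subseteq G_2\subseteq\dotsm$ of finite subsets of $G$ with $G=\bigcup_n G_n$; this is possible because $G$ is second countable, hence $\sigma$-compact and separable, so a countable dense (or generating) subset can be enumerated and its initial segments taken as the $G_n$. The key point is that a homothetic copy $\{g_0\pmb{+}rf\mid f\in F\}$ guaranteed by Lemma~\ref{lem3.1}, with $r\neq 0$, is exactly a monochromatic configuration $g^\prime\pmb{+}r g_1,\dots,g^\prime\pmb{+}r g_l$ of the shape we want, provided we can force the ``dilation'' to be of the form $m\in\mathbb{N}$ rather than an arbitrary nonzero $r\in R$.

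Next I would make precise the reduction from an arbitrary $K$-coloring $k\colon G\to\{1,\dots,K\}$ to a $q$-partition of some $G_n$: restricting $k$ to $G_n$ partitions $G_n$ into at most $K$ color classes $B_1,\dots,B_K$, so we take $q=K$ in Lemma~\ref{lem3.1}. Applying that lemma with the finite set $F$ and $q=K$ yields a threshold $N=N(K,F)$ so that for all $n\ge N$ one color class contains a homothetic copy $\{g_0\pmb{+}rf\mid f\in F\}$ with $g_0\in G$ and $r\neq 0$. The candidate base points $g_0$ and scalars $r$ range over finite sets once $n$ is fixed at $n=N$: since $G_N$ is finite, there are only finitely many copies $\{g_0\pmb{+}rf\}$ lying inside $G_N$, so the finitely many base points $g_0$ that can occur form a finite set, and I would take $Q$ to be (a finite set containing) all these possible $g^\prime=g_0$.

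The main obstacle I anticipate is precisely the gap between the scalar $r\in R\setminus\{0\}$ produced by Lemma~\ref{lem3.1} and the required integer dilation $m\in\mathbb{N}$ with $1\le m\le M$: the conclusion of Corollary~\ref{cor3.3} demands that the homothety factor be a positive-integer multiple $m g_i$, not an arbitrary ring element $r g_i$. This is where the irreducibility of $G$ relative to $F$ must do the work, together with a restatement of Lemma~\ref{lem3.1} in which the homothetic copies are indexed by $\mathbb{N}$ rather than by $R$. Concretely, I would reformulate the van der Waerden input so that the set whose initial segments exhaust $G$ is the $\mathbb{N}$-dilated family $\{m g_i\}$, i.e. work inside the $\mathbb{N}$-semimodule generated by $F$ and its translates, so that the homothety parameter $r$ appearing in Lemma~\ref{lem3.1} is forced to lie in $\mathbb{N}$; the hypothesis ``$m f\neq\e$ for all nonzero $f\in F$ and all $m\in\mathbb{N}$'' guarantees that distinct $m$ give genuinely distinct (and nonzero) configurations, so the copies are nondegenerate. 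Finiteness of $M$ then follows because, with $n=N$ fixed and $G_N$ finite, the admissible values of $m$ are bounded; I would set $M$ to be this bound, completing the extraction of the finite data $(Q,M)$ that is uniform over all $K$-colorings.

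The bookkeeping steps — verifying $G=\bigcup_n G_n$ with each $G_n$ finite, checking that restriction of $k$ to $G_N$ is a partition into at most $K$ classes, and confirming that the finitely many copies inside $G_N$ produce finite $Q$ and bounded $M$ — are routine once the $R$-versus-$\mathbb{N}$ dilation issue is settled, so I would state them briefly and concentrate the argument on arranging Lemma~\ref{lem3.1} to output an integer dilation. Finally, I would remark that taking $g^\prime\in Q$ and $1\le m\le M$ as above, the monochromaticity $k(g^\prime\pmb{+}mg_1)=\dotsm=k(g^\prime\pmb{+}mg_l)$ is exactly the assertion that the homothetic copy found by Lemma~\ref{lem3.1} lies in a single color class, which proves Corollary~\ref{cor3.3}.
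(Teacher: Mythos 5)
In its final form your argument reproduces the paper's own strategy: the paper fixes a countable dense subgroup $G^\prime$ of $G$ with $F\subset G^\prime$, regards it as a $\mathbb{Z}_+$-semimodule, and applies Lemma~\ref{lem3.1} to $G^\prime$ with $q=K$, so that the homothety scalar produced by the lemma is automatically a positive integer; the finite set $Q$ and the bound $M$ then come from counting the pairs $(m,g_0)$ with $g_0\pmb{+}mg_1$ and $g_0\pmb{+}mg_2$ lying in the finite van der Waerden set (assuming, as one may, that $g_1\not=g_2$). Your device of forcing the dilation into $\mathbb{N}$ by running Lemma~\ref{lem3.1} over an $\mathbb{N}$-semimodule rather than over the $R$-module structure is exactly this move. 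Two steps of your write-up do need repair, however. First, the opening claim that $G=\bigcup_n G_n$ with each $G_n$ \emph{finite} is impossible whenever $G$ is uncountable (e.g.\ $G=\mathbb{R}^d$): second countability yields a countable dense subset, not countability of $G$ itself. The exhaustion by finite sets must therefore be carried out inside a countable sub-object containing $F$ --- precisely the countable subgroup/semimodule that your later fix (and the paper) uses --- and one simply restricts the given coloring $k$ to it; since the resulting $g^\prime$ and $m$ still lie in $G$ and $\mathbb{N}$, the conclusion for $k$ on all of $G$ is unaffected. Second, your finiteness count is too quick: ``only finitely many copies lie inside the finite set'' does not bound the pairs $(m,g_0)$, because distinct pairs can produce the same copy (if all the $g_i$ were equal, infinitely many pairs would give the same singleton). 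One should first dispose of the trivial case $g_1=\dotsm=g_l$, and otherwise exploit two distinct elements, say $g_1\not=g_2$: then $m(g_1-g_2)=(g_0\pmb{+}mg_1)-(g_0\pmb{+}mg_2)$ ranges over the finite difference set of the van der Waerden set, and the irreducibility hypothesis (no integer torsion) makes distinct $m$ give distinct such values, which bounds $m$ and in turn pins $g_0$ down to finitely many possibilities. With these two repairs your argument coincides with the paper's proof.
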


\begin{proof}
First we can choose a countable dense subgroup $G^\prime$ of $G$ with $F\subset G^\prime$. We will regard it as a $\mathbb{Z}_+$-semimodule.

According to Lemma~\ref{lem3.1} for $F=\{g_1,\dotsc,g_l\}\subset G^\prime$ and $q=K$, there exists a finite subset $G_L^\prime$ of $G^\prime$ such that for any coloring map $k\colon G_L^\prime\rightarrow\{1,2,\dotsc,K\}$, $k$ will be constant on a subset of $G_L^\prime$ of the form $\{g_0^{}\pmb{+}mg_1, g_0^{}\pmb{+}mg_2,\dotsc,g_0^{}\pmb{+}mg_l\}$ where $m\not=0$. But there are only finitely many possibilities for $m\in\mathbb{Z}_+$ and $g_0^{}\in G^\prime$ with $g_0^{}\pmb{+}mg_1\in G_L^\prime$ and $g_0^{}\pmb{+}mg_2\in G_L^\prime$, assuming, as we may, that $g_1\not=g_2$. Carrying this over to $G^\prime$ and then to $G$, we can conclude the statement.
\end{proof}

The implication of the relatively compactness of $\pi\colon G_{rc}\curvearrowright_T(X,\mathscr{X}^\prime,\mu)\rightarrow G_{rc}\curvearrowright_S(Y,\mathscr{Y},\nu)$ is summarized in the following lemma, which generalizes \cite[Lemma~7.10]{Fur}.

\begin{Lem}\label{lem3.4}
Let $\pi\colon G_{rc}\curvearrowright_T(X,\mathscr{X}^\prime,\mu)\rightarrow G_{rc}\curvearrowright_S(Y,\mathscr{Y},\nu)$ be relatively compact. Let $A\in\mathscr{X}^\prime$ with $\mu(A)>0$. Then we can find $A^\prime\subseteq A$ with $\mu(A^\prime)$ as close as we like to $\mu(A)$ having the property: For any $\varepsilon>0$ there exists a finite set of functions $\phi_1^{},\dotsc, \phi_K^{}$ in $\mathfrak{L}^2(X,\mathscr{X},\mu)$ and a function
\begin{gather*}
k\colon Y\times G_{rc}\rightarrow\{1,2,\dotsc,K\}
\end{gather*}
such that for every $h\in G_{rc}$, $\|T_h1_{A^\prime}-\phi_{k(y,h)}^{}\|_{2,y}$ for $\nu$-a.e. $y\in Y$.
\end{Lem}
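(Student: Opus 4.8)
The plan is to recognize the conclusion as the single assertion that the indicator $1_{A^\prime}$ is an \emph{almost periodic} function for the $G_{rc}$-action relative to $\Y$, and then to reduce the whole lemma to the defining density property of a relatively compact extension. Recall that relative compactness of $\pi\colon\X^\prime\to\Y$ means that the almost periodic functions---those $f\in\mathfrak{L}^2(X,\mathscr{X}^\prime,\mu)$ for which, given $\varepsilon>0$, one can find finitely many $\phi_1,\dotsc,\phi_K\in\mathfrak{L}^2(X,\mathscr{X},\mu)$ with $\min_{1\le j\le K}\|T_hf-\phi_j\|_{2,y}<\varepsilon$ for $\nu$-a.e.\ $y$ and every $h\in G_{rc}$---are dense in $\mathfrak{L}^2(X,\mathscr{X}^\prime,\mu)$. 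Hence, once I produce a set $A^\prime\subseteq A$ with $\mu(A^\prime)$ as close to $\mu(A)$ as desired and with $1_{A^\prime}$ almost periodic, the family $\phi_1,\dotsc,\phi_K$ demanded by the lemma is exactly the finite net furnished by this definition applied to $f=1_{A^\prime}$, and I set $k(y,h)$ to be the least index $j$ with $\|T_h1_{A^\prime}-\phi_j\|_{2,y}<\varepsilon$. Checking that $k$ may be taken measurable is routine, since each map $(y,h)\mapsto\|T_h1_{A^\prime}-\phi_j\|_{2,y}$ is jointly measurable and $k$ is defined by a ``first index below threshold'' rule.

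So everything comes down to manufacturing a large subset $A^\prime\subseteq A$ whose indicator is almost periodic. Following Furstenberg's treatment of \cite[Lemma~7.10]{Fur}, but with $\mathbb{Z}$ replaced by the module $G_{rc}$, I would first use density to pick an almost periodic $g$ with $\|1_A-g\|_2<\delta$ for a small $\delta$, and replace $g$ by $\max(0,\min(1,g))$; composing with this $1$-Lipschitz map does not increase the fibrewise distances $\|T_h\,\cdot\,-\,T_{h^\prime}\,\cdot\,\|_{2,y}$, hence preserves almost periodicity, leaving $0\le g\le1$ and $\|1_A-g\|_2<\delta$. The set $A^\prime$ will then be obtained from a super-level set. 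Chebyshev's inequality applied to $\|1_A-g\|_2<\delta$ bounds both $\mu(A\setminus\{g>\lambda\})$ and $\mu(\{g>\lambda\}\setminus A)$ by a multiple of $\delta^2$, so for $\lambda$ bounded away from $0$ and $1$ the level set is as close to $A$ in measure as we wish. To secure the required inclusion $A^\prime\subseteq A$ I would regularize in the fibre direction, using that a relatively compact extension is isometric: mollifying $1_A$ against a fixed Lipschitz kernel in the fibre metric produces an almost periodic $f$ (its translates $\{T_hf\}$ are uniformly equicontinuous on the compact fibres, hence fibrewise precompact by Arzel\`a--Ascoli uniformly in $y$), whose top level set $\{f=1\}$ consists of points whose whole fibre neighbourhood lies in $A$ and therefore satisfies $\{f=1\}\subseteq A$ with $\mu\{f=1\}\uparrow\mu(A)$ as the kernel concentrates.

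The step I expect to be the main obstacle is precisely the passage from the almost periodicity of the smooth function $f$ to that of the indicator of one of its level sets, since the level-set map is not $\mathfrak{L}^2$-continuous. I would handle a level $\lambda$ by approximating $1_{\{f>\lambda\}}$ by the functions $\psi_n\circ f$ with $\psi_n(s)=\min\bigl(1,n(s-\lambda)^+\bigr)$: each $\psi_n\circ f$ is almost periodic as a Lipschitz image of $f$, and $\psi_n\circ f\to1_{\{f>\lambda\}}$ fibrewise. The delicate point is that almost periodicity is an a.e.-$y$, uniform-in-$h$ condition rather than a condition in the norm of $\mathfrak{L}^2(X)$, so I must upgrade this fibrewise convergence to convergence in the stronger fibre-uniform sense and invoke closedness of the almost periodic class under such limits; this succeeds for all but countably many levels $\lambda$, namely those for which $\mu_y\{f=\lambda\}=0$ for $\nu$-a.e.\ $y$, a fact extracted by a Fubini argument over the pair $(\lambda,y)$. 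Reconciling this limiting argument with the inclusion $A^\prime\subseteq A$---that is, running it at the top level while retaining the fibre-uniform control---is where the compact (isometric) structure of the extension is essential, and is the technical heart of the proof.
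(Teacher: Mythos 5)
Your reduction is set up correctly: the lemma is precisely the assertion that some $A^\prime\subseteq A$, of measure arbitrarily close to $\mu(A)$, has $1_{A^\prime}$ almost periodic (FK a.p.) for $G_{rc}$ relative to $\Y$, and the measurable ``first index below threshold'' selection of $k(y,h)$ is unproblematic. (The paper itself disposes of the lemma in one line, by citing the characterization theorem of relatively compact extensions, Theorem~2.4 of part I of this series, so it treats the content as already established there.) The gap is in your construction of $A^\prime$, in both of its steps. First, to force $A^\prime\subseteq A$ you invoke a fibre metric, mollification against a Lipschitz kernel, and Arzel\`a--Ascoli on compact fibres --- that is, an isometric model of the extension. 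But relative compactness here is a purely measure-theoretic hypothesis (density of a.p.\ functions, or the characterization in [Dai-pre]); the passage to an isometric/fibre-metric model is a Mackey--Zimmer-type representation theorem, which is not available in this generality (lcscN modules over syndetic rings) and is nowhere used in this series. You cannot assume the fibres carry a compact metric on which $T_h$ acts isometrically. Second, the level-set step fails as stated: almost periodicity is uniform over $h\in G_{rc}$ and essentially uniform over $y$, and since $\|T_hu-T_hv\|_{2,y}=\|u-v\|_{2,y^\prime}$ with $y^\prime$ the $h$-translated fibre, control of $\|\psi_n\circ f-1_{\{f>\lambda\}}\|_{2,y}$ for $\nu$-a.e.\ \emph{fixed} $y$ (which is all your Fubini choice of $\lambda$ provides) says nothing about the fibres swept out as $h$ varies; a.e.-fibrewise convergence does not transfer almost periodicity to the limit, only essentially-uniform-in-$y$ convergence does, and $\psi_n\circ f\to1_{\{f>\lambda\}}$ is not uniform in $y$ in general. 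You flagged this as the technical heart, but it is left genuinely unresolved.

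The fix is the classical trick of Furstenberg's Lemma~7.10, which makes both difficulties disappear: take an a.p.\ function $f$ with $0\le f\le1$ (truncation is harmless, as you note) and $\|1_A-f\|_2^2<\eta\delta$; by Chebyshev applied to $y\mapsto\|1_A-f\|_{2,y}^2$, the set $B=\left\{y\in Y\colon\|1_A-f\|_{2,y}^2<\eta\right\}\in\mathscr{Y}$ satisfies $\nu(B^c)<\delta$; now set $A^\prime=A\cap\pi^{-1}(B)$. The inclusion $A^\prime\subseteq A$ and the bound $\mu(A\setminus A^\prime)\le\nu(B^c)<\delta$ are automatic, and $1_{A^\prime}$ is a.p.: for each $h$, the function $T_h1_{A^\prime}$ vanishes identically on every fibre outside the $h$-translate of $B$ (because $A^\prime\subseteq\pi^{-1}(B)$ and $\pi$ is equivariant), while on a fibre $y$ inside that translate one has $\|T_h1_{A^\prime}-T_hf\|_{2,y}=\|1_A-f\|_{2,y^\prime}<\sqrt{\eta}$ at the corresponding fibre $y^\prime\in B$; hence the finite family $\{\phi_1,\dotsc,\phi_K,0\}$, with $\phi_1,\dotsc,\phi_K$ the fibrewise $\varepsilon$-net furnished by almost periodicity of $f$, approximates all translates $T_h1_{A^\prime}$ to within $\varepsilon+\sqrt{\eta}$ on $\nu$-a.e.\ fibre. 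This single intersection with $\pi^{-1}(B)$ replaces both your mollification argument and your level-set limit.
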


\begin{proof}
This result follows easily from the characterization theorem of relatively compact extensions~\cite[Theorem~2.4]{Dai-pre}.
\end{proof}

Now we are ready to prove the following result which is one of our ladders for proving the multiple recurrence theorem.

\begin{prop}\label{prop3.5}
Let $(G,\pmb{+})$ be an irreducible lcsc $R$-module. If $\pi\colon\X^\prime\rightarrow\Y$
is a primitive extension of an \textit{Sz}-system $\Y$ over asymptotic F{\o}lner sequences in $(R,+)$, then $\X^\prime$ is also an \textit{Sz}-system over asymptotic F{\o}lner sequences in $(R,+)$.
\end{prop}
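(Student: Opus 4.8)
The plan is to follow the classical template for lifting the \textit{Sz}-property through a primitive extension, adapting Furstenberg's compact-plus-weak-mixing argument to the module setting. Fix $A\in\mathscr{X}^\prime$ with $\mu(A)>0$ and elements $g_1,\dots,g_l\in G$; by the device used at the end of the proof of Corollary~\ref{cor2.3} we may assume the $g_i$ are pairwise distinct. Using the direct sum $G=G_{rc}\oplus G_{rw}$, write $g_i=a_i\oplus b_i$ with $a_i\in G_{rc}$ and $b_i\in G_{rw}$, so that each $g_i^t=T_{ta_i}\circ T_{tb_i}$ splits into a relatively compact part and a totally relatively weak-mixing part. In view of Lemma~\ref{lem1.2}, the whole task reduces to producing constants $\delta,\varepsilon>0$ and a Borel set of $t$ of lower density $\ge\delta$ over the given asymptotic F{\o}lner sequence on which the fiber-integrated product $\int_X g_1^t1_A\cdots g_l^t1_A\,d\mu$ stays above $\varepsilon$.

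First I would shrink $A$ using the compact structure. Applying Lemma~\ref{lem3.4} to the relatively compact extension for $G_{rc}$ yields $A^\prime\subseteq A$ with $\mu(A^\prime)$ as close to $\mu(A)$ as we like, a finite $\varepsilon$-net $\phi_1,\dots,\phi_K\in\mathfrak{L}^2(X,\mathscr{X},\mu)$, and a coloring $k\colon Y\times G_{rc}\to\{1,\dots,K\}$ with $\|T_h1_{A^\prime}-\phi_{k(y,h)}\|_{2,y}<\varepsilon$ for $\nu$-a.e.\ $y$ and every $h\in G_{rc}$. Next I would feed the configuration $F=\{a_1,\dots,a_l\}$ into Corollary~\ref{cor3.3}; irreducibility of $G$ guarantees irreducibility relative to $F$, producing a \emph{finite} set $Q\subseteq G_{rc}$ and a \emph{bounded} $M\ge1$. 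Applying this to the translated colorings $h\mapsto k(y,s\pmb{+}h)$ as $s$ ranges over $G_{rc}$ gives, near each $s$, a monochromatic homothetic copy $\{g^\prime\pmb{+}m\,a_i\}_{i=1}^l$ with $g^\prime\in Q$ and $1\le m\le M$; the boundedness of $Q$ and $M$ is exactly what turns this into a \emph{syndetic}, hence positive-lower-density, family of aligned configurations rather than mere nonemptiness. After a pigeonhole over the finitely many pairs $(g^\prime,m)$, the functions $T_{m a_i}1_{A^\prime}$ become mutually $2\varepsilon$-close in the relevant fiber once the shift $g^\prime$ is absorbed by the measure-preservation of $S_{g^\prime}$; thus the compact coordinates of the homothetic configuration $\{mg_i\}$ are essentially frozen on a positive-measure set of base points.

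With the compact coordinates controlled, I would then handle the weak-mixing coordinates and inject the base recurrence. Since $\pi$ is totally relatively weak-mixing for $G_{rw}$ and the elements $mb_i\in G_{rw}$ behave as required, Lemma~\ref{lem2.2} and Corollary~\ref{cor2.4} apply to the $G_{rw}$-action: off a density-zero set of $t$, the fiber integral $\int_X\prod_i T_{mg_i}^t1_{A^\prime}\,d\mu_y$ is, for $\nu$-most $y$, within $\varepsilon$ of the base quantity assembled from the conditional expectations $E_\mu(1_{A^\prime}\mid\Y)$ evaluated along $S_{mg_i}^ty$. The frozen compact parts make this base quantity bounded below precisely when all the points $S_{mg_i}^ty$ land in a fixed positive-measure good set, and because $\Y$ is an \textit{Sz}-system, the set of $t$ for which $\nu\big(\bigcap_i S_{mg_i}^{-t}B_0\big)$ stays bounded below has positive lower density over the asymptotic F{\o}lner sequence. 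Intersecting with the density-one set supplied by weak mixing leaves a positive-lower-density set of $t$ on which $\int_X\prod_i T_{mg_i}^t1_{A^\prime}\,d\mu>\varepsilon^\prime$, i.e.\ the configuration $\{mg_1,\dots,mg_l\}$ already satisfies the quantitative recurrence of Lemma~\ref{lem1.2}.

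It remains to pass from the rescaled configuration $\{mg_i\}$ back to $\{g_i\}$, and this is where condition $(AF)$ is indispensable: using $(mg_i)^t=g_i^{mt}$ together with $mF_n\subseteq F_{mn}$ and $|F_{mn}|\le c_m|F_n|$, and accounting for the Haar-modulus of multiplication by $m$ on $(R,+)$, a positive lower density in the variable $t$ converts into a positive lower density in the variable $mt$. This yields $(\ref{eq1.1})$ for the original $g_1,\dots,g_l$ and shows $\X^\prime$ is an \textit{Sz}-system per Definition~\ref{def1.1}. The main obstacle is the third step: unlike the pure compact or pure weak-mixing cases, one and the same extension is simultaneously relatively compact in the $G_{rc}$-directions and weak-mixing in the $G_{rw}$-directions, so van der Waerden can only be invoked once the weak-mixing reduction has isolated the compact coefficients, while the weak-mixing reduction itself needs those coefficients already aligned. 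Disentangling this interdependence, and—critically for the \emph{strong} version—upgrading the bounded multiple $m$ to a genuinely positive-density family of return times that survives uniformly through the $(AF)$ rescaling, is where the real work lies.
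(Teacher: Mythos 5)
Your outline assembles the correct ingredients (Lemma~\ref{lem3.4}, Corollary~\ref{cor3.3}, Corollary~\ref{cor2.4}, Lemma~\ref{lem1.2}, and the $(AF)$ rescaling at the end), and your closing paragraph correctly identifies the crux; but that crux is exactly what your sketch leaves unresolved, so there is a genuine gap at the central step. You apply Corollary~\ref{cor3.3} to the compact parts alone: configuration $F=\{a_1,\dots,a_l\}\subset G_{rc}$, colorings $h\mapsto k(y,s\pmb{+}h)$ on $G_{rc}$ at a \emph{fixed} fiber $y$. Monochromaticity then makes the functions $T_{g^\prime\pmb{+}ma_i}1_{A^\prime}$ close to a single $\phi_\ell$ all in the \emph{same} norm $\|\cdot\|_{2,y}$. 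But that is not what the recurrence argument needs. Writing $g_i=h_i\pmb{+}f_i$ (your $a_i\oplus b_i$), the condition to be arranged is (\ref{eq3.3}), $\mu_y\bigl(T_{f_i}^{-t}T_{h_i}^{-t}A\vartriangle T_{f_i}^{-t}A\bigr)<\varepsilon_1$, which by equivariance of the disintegration is a compact-direction comparison in the fiber over $S_{f_i}^{t}y$ --- the base point displaced by the $i$-th \emph{weak-mixing} part, hence a \emph{different} fiber for each $i$. Since Lemma~\ref{lem3.4} controls $\|T_h1_{A^\prime}-\phi_{k(y,h)}\|_{2,y}$ only fiber by fiber, no coloring of $G_{rc}$ evaluated at one fixed $y$ can align approximations that live over $l$ distinct displaced base points. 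This is precisely the interdependence you flag as ``where the real work lies''; it does not disentangle itself, and your sketch offers no mechanism for it.

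The paper's device, which is the essential point of its proof, is different: for each $r\in R$ it defines a \emph{twisted} coloring of the whole module, $k_r(y,h\pmb{+}f)=k(S_f^ry,rh)$ on $G=G_{rc}\oplus G_{rw}$, so that the weak coordinate of a group element dictates in which fiber the compact approximation is read off, and it applies Corollary~\ref{cor3.3} to the \emph{enlarged} configuration $F=\{f_1,\dots,f_l,g_1,\dots,g_l\}$ rather than to the compact parts. Monochromaticity between $g^\prime\pmb{+}mg_i$ and $g^\prime\pmb{+}mf_i$ --- two elements with the \emph{same} weak component $f^\prime\pmb{+}mf_i$ but compact components $h^\prime\pmb{+}mh_i$ and $h^\prime$ --- is exactly what produces (\ref{eq3.9}), $\|f_i^{mr}h_i^{mr}1_A-f_i^{mr}1_A\|_{2,g^{\prime r}y}<\sqrt{\varepsilon_1}$, i.e. the compact parts are neutralized in the correct displaced fibers simultaneously for all $i$. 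Two further corrections of record. First, positive density of return times does not come from any ``syndeticity'' of van der Waerden configurations: Corollary~\ref{cor3.3} gives only pointwise existence of $(m,g^\prime)$ in a finite range; the density comes from applying the \textit{Sz}-property of $\Y$ to the finite family $\{mf_i\pmb{+}g^\prime\}$ to obtain the sets $C_r$ of (\ref{eq3.10}) with $\nu(C_r)>\eta^\prime$ on a positive-lower-density set of $r$ (via Lemma~\ref{lem1.2}), followed by a pigeonhole over the finitely many $(m,g^\prime)$. Second, Corollary~\ref{cor2.4} cannot be invoked for the elements $mg_i$ themselves, since a primitive extension is weak-mixing only in the $G_{rw}$-directions; it must be applied to the weak parts $f_i$, with the compact parts removed beforehand by (\ref{eq3.3}) --- the order of operations your sketch conflates.
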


\begin{proof}
Our proof will follow from the framework of arguing of Furstenberg~\cite[Proposition~7.12]{Fur}. In what follows, let $A\in\mathscr{X}^\prime$ with $\mu(A)>0$, and let $g_1,\dotsc,g_l\in G$ be any given.

In view of Lemma~\ref{lem3.4}, we may assume that $1_A$ is \textit{FK a.p. for $G_{rc}$} described in Lemma~\ref{lem3.4}. Writing $\mu(A)=\int_Y\mu_y(A)d\nu(y)$, we see that if we set $a=\frac{1}{2}\mu(A)$, there exists a set $B\in\mathscr{Y}$ with $\nu(B)>0$ such that $\mu_y(A)\ge a$ for every $y\in B$.
We express the given elements $g_1,\dotsc,g_l\in G$ as follows:
\begin{gather*}
g_1=h_1\pmb{+}f_1, g_2=h_2\pmb{+}f_2, \dotsc,\ g_l=h_l\pmb{+}f_l,\quad \textit{where }\ h_i\in G_{rc}\textit{ and }f_i\in G_{rw}\textit{ for }1\le i\le l.
\end{gather*}
We let $\{F_n\}_1^\infty$ be any given asymptotic F{\o}lner sequence in $(R,+)$.

Let $a_1<a^l$. We shall show that there exists a measurable set $P\subset R$ with positive lower density over $\{F_n\}_1^\infty$ and $\eta>0$ such that for each $t\in P$ there is a set $B_t\in\mathscr{Y}$ with $\nu(B_t)>\eta$ such that
\begin{gather}\label{eq3.1}
\mu_y\left(\bigcap_{i=1}^lT_{h_i}^{-t}T_{f_i}^{-t}A\right)>a_1\quad\forall y\in B_t.
\end{gather}
This will implies Proposition~\ref{prop3.5}, since this leads to
$$
\mu\left(\bigcap_{i=1}^lT_{g_i}^{-t}A\right)>a_1\eta\quad \forall t\in P.
$$
The set $B_t$ will be defined by two requirements. For $a_1<a_2<a^l$ we shall require
\begin{gather}\label{eq3.2}
\mu_y\left(\bigcap_{i=1}^lT_{f_i}^{-t}A\right)>a_2\quad\forall y\in B_t\textit{ with }t\in P.
\end{gather}
Secondly we prescribe $\varepsilon_1>0$ such that if  $\mu_y\left(T_{f_i}^{-t}T_{h_i}^{-t}A\vartriangle T_{f_i}^{-t}A\right)<\varepsilon_1$ for all $1\le i\le l$, then (\ref{eq3.2}) implies (\ref{eq3.1}). Then we require
\begin{gather}\label{eq3.3}
\mu_y\left(T_{f_i}^{-t}T_{h_i}^{-t}A\vartriangle T_{f_i}^{-t}A\right)<\varepsilon_1,\quad 1\le i\le l,
\end{gather}
when $t\in P$ and $y\in B_t$.

Suppose now that $P$ and $\{B_t; t\in P\}$ have been found so that (\ref{eq3.3}) is satisfied and, in addition,
\begin{gather}\label{eq3.4}
S_{f_i}^ty\in B,\quad \forall y\in B_t,\ 1\le i\le l.
\end{gather}
Apply Corollary~\ref{cor2.4} with $\psi_1=\dotsm=\psi_l=1_A, \varepsilon<a^l-a_2$ and $\delta<\frac{1}{2}\min_{t\in P}\nu(B_t)$. Then
$$
\mu_y\left(\bigcap_{i=1}^lT_{f_i}^{-t}A\right)=\int_X\prod_{i=1}^lT_{f_i}^tfd\mu_y>\prod_{i=1}^lS_{f_i}^tE_\mu(1_A|\Y)(y)-\varepsilon=\prod_{i=1}^l\mu_{S_{f_i}^ty}(A)-\varepsilon\ge a^l-\varepsilon>a_2
$$
for any $y\in B_t$ but for a set of $y$ of measure $<\frac{1}{2}\nu(B_t)$ and for $t\in P$ outside a set of density $0$. Modifying $P$ and $B_t$ accordingly, we will be left with a set---call it again $P$---with positive lower density, and for each $t\in P$ a set---call it again $B_t$---with $\inf_{t\in P}\nu(B_t)>0$ such that for these $t$ and $y$, (\ref{eq3.2}) and (\ref{eq3.3}) are valid. As we have seen, $(\ref{eq3.3})+(\ref{eq3.4})\Rightarrow(\ref{eq3.3})+(\ref{eq3.2})\Rightarrow(\ref{eq3.1})$. Thus the problem is reduced to finding $P$ and $\{B_t; t\in P\}$ such that (\ref{eq3.3}) and (\ref{eq3.4}) are satisfied.

Let $\varepsilon_2<\frac{1}{2}\sqrt{\varepsilon_1}$ be any given. Recall that $1_A$ is \textit{FK a.p. for $G_{rc}$}. By Lemma~\ref{lem3.4}, we can then find functions $\phi_1^{},\dotsc,\phi_K^{}\in\mathfrak{L}^2(X,\mathscr{X},\mu)$ and a measurable coloring map
$$k\colon Y\times G_{rc}\rightarrow\{1,2,\dotsc,K\}$$
such that
$$
\left\|T_h1_A-\phi_{k(y,h)}^{}\right\|_{2,y}<\varepsilon_2\quad \forall h\in G_{rc}\textit{ and }\nu\textit{-a.e. }y\in Y.
$$
We now define a family of maps
$$
\big{\{}k_r\colon Y\times G\rightarrow\{1,2,\dotsc,K\}\big{\}}_{r\in R}
$$
by
$$
k_r(y,h\pmb{+}f)=k(S_f^ry,rh)\quad \forall (h,f)\in G=G_{rc}\oplus G_{rw}.
$$
We then have that for any $r\in R$, for $\nu$-\textit{a.e.} $y\in Y$,
\begin{gather}\label{eq3.5}
\left\|S_f^rT_h^r1_A-S_f^r\phi_{k_r(y,h\pmb{+}f)}^{}\right\|_{2,y}=\left\|T_h^r1_A-\phi_{k(S_f^ry,rh)}^{}\right\|_{2,S_f^ry}<\varepsilon_2.
\end{gather}
Fix $r\in R$ and $y\in Y$ and apply Corollary~\ref{cor3.3} to the map $k_r(y,\centerdot)$ on $G=G_{rc}\oplus G_{rw}$ with the integer $K$ and finite set $F=\{f_1,\dotsc,f_l,g_1,\dotsc,g_l\}$. Independently of $r$ and $y$ there is a finite subset $Q\subset G$ and an integer $M\ge1$ such that $k_r(y, g^\prime\pmb{+} mg_i)$ and $k_r(y,g^\prime\pmb{+}mf_i)$ both take on the same value for $1\le i\le l$ for some $g^\prime\in Q$ and some $m$ with $1\le m\le M$. If $\ell$ is this value, we write $\phi_{(r,y)}^{}=\phi_\ell^{}$. Then if $g^\prime=h^\prime\pmb{+}f^\prime$ and simply write $T_g^t=g^t$ and $S_g^t=g^t$, then
\begin{equation}\label{eq3.6}
\begin{split}
\left\|f_i^{mr}h_i^{mr}1_A-f_i^{mr}(h^{\prime -r}\phi_{(r,y)}^{})\right\|_{2,g^{\prime r}y}&=\left\|g^{\prime r}f_i^{mr}h_i^{mr}1_A-f^{\prime r}f_i^{mr}\phi_{(r,y)}^{}\right\|_{2,y}\\
&=\left\|f^{\prime r}f_i^{mr}h^{\prime r}h_i^{mr}1_A-f^{\prime r}f_i^{mr}\phi_{(r,y)}^{}\right\|_{2,y}\\
&<\varepsilon_2
\end{split}
\end{equation}
and
\begin{equation}\label{eq3.7}
\begin{split}
\left\|f_i^{mr}1_A-f_i^{mr}h^{\prime -r}\phi_{(r,y)}^{}\right\|_{2,g^{\prime r}y}&=\left\|g^{\prime r}f_i^{mr}1_A-f^{\prime r}f_i^{mr}\phi_{(r,y)}^{}\right\|_{2,y}\\
&=\left\|f^{\prime r}f_i^{mr}h^{\prime r}1_A-f^{\prime r}f_i^{mr}\phi_{(r,y)}^{}\right\|_{2,y}\\
&<\varepsilon_2
\end{split}
\end{equation}
for all $1\le i\le l$ by (\ref{eq3.5}), and moreover
\begin{equation}\label{eq3.8}
\begin{split}
\left\|1_A-h^{\prime -r}\phi_{(r,y)}^{}\right\|_{2,{g^\prime}^ry}&=\left\|g^{\prime r}1_A-f^{\prime r}\phi_{(r,y)}^{}\right\|_{2,y}\\
&=\left\|h^{\prime r}1_A-\phi_{k(f^{\prime r}y,rh^{\prime})}^{}\right\|_{2,y}\\
&<\varepsilon_2.
\end{split}
\end{equation}
What we have shown is that for every $r\in R$ and $\nu$\textit{-a.e.} $y\in Y$, there exist $m$ and $g^\prime$, both having a finite range of possibilities, such that
\begin{gather}\label{eq3.9}
\left\|f_i^{mr}h_i^{mr}1_A-f_i^{mr}1_A\right\|_{2,g^{\prime r}y}<\sqrt{\varepsilon_1}\quad (1\le i\le l).
\end{gather}
We are now ready to produce the set $P$ and the associated sets $B_t, t\in P$ such that both (\ref{eq3.3}) and (\ref{eq3.4}) hold for $(y,t), y\in B_t$.

For each $r\in R$ we form the set
\begin{gather}\label{eq3.10}
C_r=\bigcap_{i,m,g^\prime}\left(f_i^m\circ g^\prime\right)^{-r}B\in\mathscr{Y}
\end{gather}
where the intersection is taken over $i,m,g^\prime$ with $1\le i\le l, 1\le m\le M$ and $g^\prime\in Q$. Here we use the fact that $\Y=G\curvearrowright_S(Y,\mathscr{Y},\nu)$ is an \textit{Sz}-system. It follows from Lemma~\ref{lem1.2} that there exists a subset $P^\prime$ of $R$ such that
$P^\prime$ is of positive lower density over $\{F_n\}_1^\infty$ and $\nu(C_r)>\eta^\prime>0$ for each $r\in P^\prime$. Now let $y\in C_r$ for $r\in P^\prime$. There exists $m=m(r,y)$ and $g^\prime=g^\prime(r,y)$ such that (\ref{eq3.9}) holds and $f_i^{mr}(g^{\prime r}y)\in B$ for $1\le i\le l$. Let $J$ be the total number of possibilities for $(m,g^\prime)$. Then since $y\mapsto\mu_y$ is measurable, hence for a $\mathscr{Y}$-set $D_r\subset C_r$ with $\nu(D_r)>\frac{1}{J}\eta^\prime$, $m(r,y)$ and $g^\prime(r,y)$ take on a constant value, say $m(r), g(r)^\prime$. We now define $t(r)=m(r)r$, and set $P=\{t(r); r\in P^\prime\}$ and
\begin{gather}\label{eq3.11}
B_{t(r)}=g(r)^{\prime r}D_r\ \left(=S_{g(r)^\prime}^rD_r\right)\quad \forall r\in P^\prime.
\end{gather}
Then for any $t=t(r)\in P$, we have
\begin{gather}\label{eq3.12}
\nu(B_{t})>\frac{1}{J}\eta^\prime,\quad f_i^{t}B_{t}\subset B,\quad \left\|f_i^{t}h_i^{t}1_A-f_i^{t}1_A\right\|_{2,y}<\varepsilon_2 \quad(1\le i\le l)
\end{gather}
for any $y\in B_t$.

Finally we need to show that $P$ is of positive lower density over $\{F_n\}_1^\infty$. Indeed, let $\{F_{n}^\prime\}_1^\infty$ be any subsequence of $\{F_n\}_1^\infty$.
Set
$$
P_1^\prime=\{r\in P^\prime\,|\,m(r)=1\},\ \dotsc,\ P_M^\prime=\{r\in P^\prime\,|\,m(r)=M\}.
$$
Then by
\begin{equation*}
\begin{split}
\liminf_{n\to+\infty}\frac{|P^\prime\cap F_n^\prime|}{|F_n^\prime|}&=\liminf_{n\to+\infty}\frac{|P_1^\prime\cap F_n^\prime|+\dotsm+|P_M^\prime\cap F_n^\prime|}{|F_n^\prime|}\\
&=\liminf_{n\to+\infty}\frac{|(1P_1^\prime)\cap(1F_n^\prime)|+\dotsm+|(MP_M^\prime)\cap(MF_n^\prime)|}{|F_n^\prime|}\\
&\le\sum_{m=1}^M\limsup_{n\to+\infty}\frac{|P\cap(mF_n^\prime)|}{|F_n^\prime|}\\
&\le\sum_{m=1}^M\limsup_{n\to+\infty}\frac{c_m|P\cap F_{mn}^\prime|}{|F_{mn}^\prime|}\\
&\le(c_1+\dotsm+c_M)\limsup_{n\to+\infty}\frac{|P\cap F_n^\prime|}{|F_n^\prime|}
\end{split}\end{equation*}
we see that
$$
\limsup_{n\to+\infty}\frac{|P\cap F_n^\prime|}{|F_n^\prime|}>0.
$$
This implies that
$$
\liminf_{n\to+\infty}\frac{|P\cap F_n|}{|F_n|}>0.
$$
Now we have fulfilled all of our requirements, and thus this concludes the proof of Proposition~\ref{prop3.5}.
\end{proof}

The following result is contained in the proof of Proposition~\ref{prop3.5}.

\begin{prop}\label{prop3.6}
Let $(G,\pmb{+})$ be an lcsc $R$-module. If $\pi\colon\X^\prime\rightarrow\Y$
is a primitive extension of the system $\Y$ which satisfies that for any $B\in\mathscr{Y}$ with $\nu(B)>0$ and any $g_1,\dotsc,g_l\in G$ there exists some integer $N=N(B,g_1,\dotsc,g_l)\ge1$ such that for any weak F{\o}lner sequence $\{F_n\}_1^\infty$ in $(R,+)$
\begin{gather}\label{eq3.13}
\limsup_{n\to+\infty}\frac{1}{|F_n|}\int_{F_n}\nu\left(B\cap S_{g_1}^{-m^\prime t}B\cap\dotsm\cap S_{g_l}^{-m^\prime t}B\right)dt>0
\end{gather}
for some integer $m^\prime$ with $1\le m^\prime\le N$,
then $\X^\prime$ also possesses this property.
\end{prop}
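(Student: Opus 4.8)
The plan is to run the proof of Proposition~\ref{prop3.5} essentially verbatim, isolating the three places where that argument used hypotheses absent here---the irreducibility of $G$, the full \textit{Sz}-property of $\Y$, and the doubling condition $(AF)$---and replacing each by its weak counterpart. As in Proposition~\ref{prop3.5} I would first use Lemma~\ref{lem3.4} to reduce to the case that $1_A$ is \textit{FK a.p.} for $G_{rc}$, decompose $g_i=h_i\pmb{+}f_i$ with $h_i\in G_{rc}$ and $f_i\in G_{rw}$, set $a=\frac12\mu(A)$, choose $B\in\mathscr{Y}$ with $\nu(B)>0$ and $\mu_y(A)\ge a$ on $B$, and fix $a_1<a_2<a^l$ together with $\varepsilon_1>0$ exactly as there. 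The target is now to produce a single integer $m_*$, bounded independently of the weak F{\o}lner sequence, together with a set of base times of positive \emph{upper} density on which the multiple return estimate $\mu\big(\bigcap_{i=1}^lT_{g_i}^{-m_*s}A\big)>a_1\eta$ holds; this is exactly the weak \textit{Sz}-property for $\X^\prime$.

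The combinatorial core keeps its shape but must be read without irreducibility. I would apply the finitary van der Waerden theorem (Lemma~\ref{lem3.1}), through the coloring maps $k_r$ built from the \textit{FK} functions $\phi_1,\dots,\phi_K$, to obtain for every scalar and $\nu$-a.e.\ $y$ a multiple $m\le M$ and a shift $g^\prime\in Q$, with $Q$ finite, yielding the almost-periodicity estimate (\ref{eq3.9}). The only delicate point is the boundedness of $m$: in Corollary~\ref{cor3.3} this used that $G$ is irreducible relative to $F$, but the bound in fact follows already from the finitary nature of Lemma~\ref{lem3.1}, since the monochromatic homothetic copy is confined to a fixed finite window, so only finitely many admissible configurations occur and each has a minimal positive-integer multiple. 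Taking the maximum of these yields a bound $M$ on $m$ and a finite $Q$ for general $G$; this is precisely the step where Proposition~\ref{prop3.5} needed irreducibility while Proposition~\ref{prop3.6} does not.

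Next comes the recurrence input from $\Y$. Exactly as in (\ref{eq3.10}) I form $C_r=\bigcap_{i,m,g^\prime}(f_i^m\circ g^\prime)^{-r}B$ over $1\le i\le l$, $1\le m\le M$, $g^\prime\in Q$, so that $y\in C_r$ forces all the translates $f_i^{mr}g^{\prime r}y$ to lie in $B$. Where Proposition~\ref{prop3.5} invoked the full \textit{Sz}-property of $\Y$ to make $\{r:\nu(C_r)>\eta^\prime\}$ of positive lower density, I would here invoke only the \emph{weak} \textit{Sz}-hypothesis on $\Y$: applied to $B$ and the finite family $W=\{mf_i\pmb{+}g^\prime\}$, it furnishes a bounded multiple $m_0\le N_0$ (possibly depending on the sequence) and, via the $\limsup$-version of Lemma~\ref{lem1.2}, a set $\mathcal{T}$ of positive upper density on which $\nu\big(B\cap\bigcap_{w\in W}S_w^{-m_0t}B\big)>\eta^\prime$. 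I then carry out the combinatorial and almost-periodic step at the scalar $\rho=m_0t$, so that the return conditions supplied by $y\in\big(\bigcap_wS_w^{-m_0t}B\big)$ line up with (\ref{eq3.9}); combined through Corollary~\ref{cor2.4} as before, this gives the full multiple recurrence for $\X^\prime$ at the times $m(\rho,y)\,\rho=m(\rho,y)\,m_0\,t$, whose total multiple $m(\rho,y)\,m_0\le MN_0$ is bounded and whose base variable $t$ ranges over the positive-upper-density set $\mathcal{T}$.

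Finally I would replace the $(AF)$-based density transfer that closed Proposition~\ref{prop3.5} by a pigeonhole. As in the passage to $D_r\subset C_r$ there, I first pigeonhole over $y$ to make $m(\rho,y)$ and $g^\prime(\rho,y)$ constant on a $\nu$-nonnull piece; then I pigeonhole over $t\in\mathcal{T}$ across the finitely many values of the total multiple in $\{1,\dots,MN_0\}$ to fix one value $m_*$ on a subset of $\mathcal{T}$ still of positive upper density. On that subset the estimate $\mu\big(\bigcap_{i=1}^lT_{g_i}^{-m_*s}A\big)>a_1\eta$ holds, whence $\limsup_n\frac{1}{|F_n|}\int_{F_n}\mu\big(A\cap\bigcap_{i=1}^lT_{g_i}^{-m_*s}A\big)\,ds>0$, which is the asserted property, the required bound being $MN_0$. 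I expect the main difficulty to be precisely this bookkeeping: synchronizing the two independent bounded multiples---the combinatorial $m\le M$ and the $\Y$-recurrence $m_0\le N_0$---at a common scalar so that the $C_r$ return conditions match (\ref{eq3.9}), and verifying that the pigeonhole, which must here substitute for the doubling estimate used under $(AF)$, still leaves a set of positive upper density once a single total multiple is singled out.
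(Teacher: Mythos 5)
Your proposal is correct and takes essentially the same route as the paper's own proof: apply the hypothesis~(\ref{eq3.13}) on $\Y$ to $B$ and the finite family $\{mf_i\pmb{+}g^\prime\,|\,1\le i\le l,\,1\le m\le M,\,g^\prime\in Q\}$ to get a bounded multiple $m^\prime\le N$ together with a positive-upper-density set of times $t$ on which $\nu(C_t)>\eta^\prime$, run the coloring/almost-periodicity step of Proposition~\ref{prop3.5} at the scalar $m^\prime t$, pigeonhole over $y$ and then over the finitely many values of the combinatorial multiple, and conclude with the sequence-independent bound $M\cdot N$. Your only divergence is the torsion point: the paper disposes of it by asserting one may assume $G$ irreducible relative to $F$, whereas you observe that Corollary~\ref{cor3.3} survives without irreducibility because the monochromatic configuration lies in a fixed finite window and can be re-realized by its minimal multiple, leaving the coloring values unchanged---a correct observation, and arguably more self-contained than the paper's unjustified reduction.
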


\begin{proof}
We first note that there is no loss of generality in assuming that $G$ is irreducible relative to the finite set $F=\{g_1,\dotsc,g_l\}\subset G$. Let $A\in\mathscr{X}$ with $\mu(A)>0$ and then $B\in\mathscr{Y}$ with $\nu(B)>0$ be as in the proof of Proposition~\ref{prop3.5}.
Similar to (\ref{eq3.10}), for each $t\in R$ we form the set
\begin{equation*}
C_t=\bigcap_{i,m,g^\prime}\left(f_i^m\circ g^\prime\right)^{-m^\prime t}B\in\mathscr{Y} \eqno{(\ref{eq3.10})^\prime}
\end{equation*}
for some integer $m^\prime\ge1$,
where the intersection is taken over $i,m,g^\prime$ with $1\le i\le l, 1\le m\le M$ and $g^\prime\in Q$. Here we use the fact that $\Y=G\curvearrowright_S(Y,\mathscr{Y},\nu)$ satisfies (\ref{eq3.13}) with the set
$$F_B:=\{m^\prime mf_i\pmb{+}m^\prime g^\prime\,|\,1\le i\le l, 1\le m\le M, g^\prime\in Q\}$$
in place of $F$.
Then for any weak F{\o}lner sequence $\{F_n\}_1^\infty$ in $(R,+)$ there exists an integer $m^\prime$ with $1\le m^\prime\le N(B,F_B)$ and a subset $P$ of $R$ such that
$P$ is of positive upper density  and $\nu(C_t)>\eta^\prime>0$ for each $t\in P$, where $\eta^\prime$ relies on $\{F_n\}_1^\infty$.

Now let $y\in C_t$ for $t\in P$.
There exists $m=m(t,y)$ and $g^\prime=g^\prime(t,y)$ such that
\begin{equation*}
\left\|f_i^{m^\prime mt}h_i^{m^\prime mt}1_A-f_i^{m^\prime mt}1_A\right\|_{2,g^{\prime m^\prime t}y}<\sqrt{\varepsilon_1}\ \textit{ and }\ f_i^{m^\prime mt}(g^{\prime m^\prime t}y)\in B\quad (1\le i\le l).\eqno{(\ref{eq3.9})^\prime}
\end{equation*}
for all $1\le i\le l$. Let $J$ be the total number of possibilities for $(m,g^\prime)\in\{1,2,\dotsc,M\}\times Q$. Hence for a $\mathscr{Y}$-set $D_t\subset C_t$ with $\nu(D_t)>\frac{1}{J}\eta^\prime$, $m(t,y)$ and $g^\prime(t,y)$ take on a constant value, say $m_t, g_t^\prime$. We now define
\begin{equation*}
B_{t}={g_t^\prime}^{m^\prime t}D_t\ \left(=S_{g_t^\prime}^{m^\prime t}D_t\right)\quad \forall t\in P.\eqno{(\ref{eq3.11})^\prime}
\end{equation*}
Then for any $t\in P$, we have
$$
\nu(B_{t})>\frac{1}{J}\eta^\prime,\quad f_i^{m_tm^\prime t}B_{t}\subset B,\quad \left\|f_i^{m_tm^\prime t}h_i^{m_tm^\prime t}1_A-f_i^{m_tm^\prime t}1_A\right\|_{2,y}<\varepsilon_2 \quad(1\le i\le l) \eqno{(\ref{eq3.12})^\prime}
$$
for any $y\in B_t$.
Set
$$
P_1=\{t\in P\,|\,m_t=1\},\ \dotsc,\ P_M=\{t\in P\,|\,m_t=M\}.
$$
Then there is some integer $k$ with $1\le k\le M$ such that
$$
\limsup_{n\to+\infty}\frac{|P_k\cap F_n|}{|F_n|}>0.
$$
This implies that
\begin{gather*}
\limsup_{n\to+\infty}\frac{1}{|F_n|}\int_{F_n}\mu\left(A\cap {g_1}^{-km^\prime t}A\cap\dotsm\cap {g_l}^{-km^\prime t}A\right)dt>0.
\end{gather*}
Now noting that $1\le km^\prime\le M\cdot N(B,F_B)$ and that $M\cdot N(B,F_B)$ is independent of $\{F_n\}_1^\infty$, this thus concludes the proof of Proposition~\ref{prop3.6}.
\end{proof}

\section{The measure-theoretic multiple recurrence}\label{sec4}
This section will be devoted to proving the multiple recurrence theorems, namely Theorem~\ref{thm0.1} and \ref{thm0.3}, stated in Introduction.

\subsection{Proof of Theorem~\ref{thm0.1}}
Let $\pi\colon\X=(X,\mathscr{X},\mu,G)\rightarrow\Y=(Y,\mathscr{Y},\nu,G)$ be a standard Borel extension of a factor $\Y$, where $G$ is an irreducible lcscN module over a syndetic ring $(R,+,\cdot)$. Then the following proposition is a slightly general statement than Theorem~\ref{thm0.1}.

\begin{prop}\label{prop4.1}
If $\Y$ is an \textit{Sz}-factor of $\X$, then $\X$ is an \textit{Sz}-system, over any asymptotic F{\o}lner sequence in $(R,+)$.
\end{prop}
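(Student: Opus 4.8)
The plan is to prove Proposition~\ref{prop4.1} by transfinite induction along a Furstenberg factors chain connecting the given factor $\Y$ to $\X$. First I would invoke the Furstenberg Structure Theorem in its form relative to the extension $\pi\colon\X\rightarrow\Y$, producing an ordinal $\eta$ together with a tower of intermediate factors
\[
\Y=\X_0\rightarrow\X_1\rightarrow\dotsm\rightarrow\X_\xi\rightarrow\X_{\xi+1}\rightarrow\dotsm\rightarrow\X_\eta=\X\quad(\mu\text{-}\mathrm{mod}\ 0),
\]
in which every successor link $\pi_{\xi+1,\xi}\colon\X_{\xi+1}\rightarrow\X_\xi$ is a nontrivial primitive extension and every factor indexed by a limit ordinal is the inverse limit of its predecessors.

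The induction runs over all ordinals $\xi\le\eta$, the inductive statement being that $\X_\xi$ is an \textit{Sz}-system over every asymptotic F{\o}lner sequence in $(R,+)$. The base case $\xi=0$ is precisely the hypothesis that $\Y$ is an \textit{Sz}-factor. At a successor ordinal, since $\pi_{\xi+1,\xi}$ is primitive and $G$ is an irreducible lcscN module, Proposition~\ref{prop3.5} lifts the \textit{Sz}-property from $\X_\xi$ to $\X_{\xi+1}$; here the irreducibility of $G$ is exactly what powers the finitary van der Waerden argument of Corollary~\ref{cor3.3} sitting at the heart of Proposition~\ref{prop3.5}. At a limit ordinal $\xi$, the family $\{\mathscr{X}_\theta;\theta<\xi\}$ of $\sigma$-subalgebras is totally ordered and generates $\mathscr{X}_\xi$, so Proposition~\ref{prop1.3} (equivalently Corollary~\ref{cor1.4}) gives that $\X_\xi$ inherits the \textit{Sz}-property from its predecessors. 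Closing the induction at $\xi=\eta$ then yields that $\X=\X_\eta$ is an \textit{Sz}-system, which is the claim of Proposition~\ref{prop4.1}; specializing $\Y$ to the one-point system recovers Theorem~\ref{thm0.1}.

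I expect the main obstacle to lie not in the inductive scheme, which is routine once the three ladders are in hand, but in securing the Structure Theorem relative to $\Y$. As stated in the excerpt it erects a tower above the trivial one-point system, so I would verify that the underlying transfinite construction---successive maximal primitive extensions interspersed with inverse limits---applies verbatim when the base is replaced by an arbitrary standard Borel factor $\Y$, which is the relative version recorded in \cite{Dai-pre}. A secondary point needing attention is that the doubling condition $(AF)$ be propagated uniformly through every stage without any accumulated constant blowing up; this is ensured because both Proposition~\ref{prop3.5} and Corollary~\ref{cor1.4} are stated over arbitrary asymptotic F{\o}lner sequences, so the quantifier over $\{F_n\}_1^\infty$ remains outside the entire induction.
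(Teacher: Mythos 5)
Your proposal is correct and follows essentially the same route as the paper: the paper's proof likewise takes the Furstenberg factors chain of $\X$ starting from $\Y$ and runs a transfinite induction, using Proposition~\ref{prop3.5} at successor (primitive) links and Corollary~\ref{cor1.4} at limit ordinals. The only difference is that the paper additionally cites Corollary~\ref{cor2.3} so as to cover links that are totally relatively weak-mixing (the degenerate primitive case with trivial compact part), a case your argument subsumes under Proposition~\ref{prop3.5} via the structure theorem's stated form, so this is a presentational rather than a substantive divergence.
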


\begin{proof}
Let
\begin{equation*}
\X\rightarrow\X_\eta\rightarrow\dotsm\rightarrow\X_{\xi+1}\xrightarrow[]{\pi_{\xi+1,\xi}}\X_\xi\rightarrow\dotsm\rightarrow\X_2\xrightarrow[]{\pi_{2,1}}\X_1\xrightarrow[]{\pi_{1,0}}\Y
\end{equation*}
be the Furstenberg factors chain of $\X=G\curvearrowright_T(X,\mathscr{X},\mu)$ starting from $\Y$. Since $\Y$ is an \textit{Sz}-system over any asymptotic F{\o}lner sequence in $(R,+)$, hence from Corollaries~\ref{cor1.4}, \ref{cor2.3} and Proposition~\ref{prop3.5} we can see that $\X$ is an \textit{Sz}-system by transfinite induction.
This thus concludes the proof of Proposition~\ref{prop4.1}.
\end{proof}

\subsection{Proof of Theorem~\ref{thm0.3}}
Based on Furstenberg Structure Theorem, by transfinite induction, Theorem~\ref{thm0.3} follows immediately from Lemma~\ref{lem1.5}, Corollary~\ref{cor2.3} and Proposition~\ref{prop3.6}.

\section{Multiple Birkhoff center and pointwise multirecurrence}\label{sec5}
Let $X$ be a compact metric space $X$ in the sequel. In this section, the topological dynamical system (for brevity, \textit{t.d.s.})
$T\colon G\times X\rightarrow X$ or $G\curvearrowright_TX$ we now consider is as in $\S0.3$, where $G$ is an lcscN module over a syndetic ring $(R,+,\cdot)$ with a Haar measure $|\cdot|$ (or write $dt$). In addition, assume $R$ is not compact, i.e., $|R|=\infty$.
\subsection{Poisson stable motions}
Given any sequence $(T_n)_{1}^\infty$ of elements of $R$, for convenience, we shall say that $T_n\to+\infty$ as $n\to+\infty$ if for any compacta $K\subset$ of $R$ there exists a positive integer $L=L(K)$ for which
\begin{gather*}
n\ge L\Rightarrow T_n\not\in K.
\end{gather*}
It will be a useful fact that if $T_n\to+\infty$ in $R$ then $t+T_n\to+\infty$ for every $t\in R$. Since there does not need to have any order in $R$, the above notion ``$T_n\to+\infty$'' ought to be of interest.

According to \cite[Def.~4.15]{Pat}, a sequence $\{R_n\}_{1}^\infty$ of nonnull compact subsets of $R$ is called a \textit{summing sequence} in $(R,+)$ if the following conditions are satisfied:
\begin{enumerate}
\item[(1)] $R=\bigcup_{1}^{\infty}R_n$;
\item[(2)] $R_n\subset\textrm{Int}(R_{n+1})\ \forall n\ge1$, where $\textrm{Int}(\centerdot)$ denotes the interior of a set;
\item[(3)] $|(r+R_n)\vartriangle R_n|\cdot|R_n|^{-1}\to0$ as $n\to+\infty$, for any $r\in R$ and then uniformly for $r$ in any compacta of $R$.
\end{enumerate}
Since $(R,+)$ is an abelian, lcsc, and Hausdorff group here, it is amenable and $\sigma$-compact. Therefore, there always exists a summing sequence in $(R,+)$; see e.g. \cite[Theorem~4.16]{Pat}. Clearly a summing sequence is a weak F{\o}lner sequence by condition (3).
From now on,
\begin{itemize}
\item let $\mathcal{R}=\{R_n\}_{1}^\infty$
be an arbitrarily fixed summing sequence in $(R,+)$.
\end{itemize}

For a sequence $(T_n)_{n=1}^\infty$ of elements of $R$, we say $T_n\xrightarrow[]{\mathcal{R}}+\infty$ as $n\to+\infty$ if for any integer $n\ge1$ there is some $L=L(n)>1$ such that $T_\ell\not\in R_n$ as $\ell\ge L$.
Then the following fact is obvious.

\begin{Lem}\label{lem5.1}
$T_n\to+\infty$ in $R$ if and only if $T_n\xrightarrow[]{\mathcal{R}}+\infty$. Thus if $T_n\xrightarrow[]{\mathcal{R}}+\infty$ then $t+T_n\xrightarrow[]{\mathcal{R}}+\infty$ for every $t\in R$.
\end{Lem}

\begin{proof}
This follows from the fact that every compact subset $K$ of $R$ must be contained by $R_n$ as $n$ sufficiently big.
\end{proof}

Next, following Furstenberg's idea~\cite[Chap.~2]{Fur} we now introduce a basic concept\,--\,multiple recurrent motion for $G$-action system as follows:

\begin{defn}\label{def5.2}
For $G\curvearrowright_TX$ a point $p\in X$ is said to be
\textit{multiply recurrent} (or \textit{multiply Poisson stable}) if for any $l\ge2$ and any sample elements $g_1, \dotsc, g_l\in G$, one can find a sequence of times $t_n\to+\infty$ in $R$ so that
$g_i^{t_n}p\to p$ as $n\to+\infty$, simultaneously for $i=1,\dotsc,l$.
\end{defn}

\subsection{Multiple recurrence}\label{sec5.2}
Although a multiply recurrent point of $G\curvearrowright_TX$ does not need to lie in the multiple Birkhoff center $\Omega_\gamma(T)$ according to Def.~\ref{def0.10} and Def.~\ref{def5.2}, yet the structure of $\Omega_\gamma(T)$ is made clear by the multiple recurrence via the following two lemmas.

\begin{Lem}\label{lem5.3}
Given any $l$ elements $g_1, \dotsc, g_l$ in $G$, the $(g_1,\dotsc,g_l)$-multiple recurrent points of $G\curvearrowright_TX$ are everywhere dense in $\Omega_\gamma(T)$.
\end{Lem}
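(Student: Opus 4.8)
The plan is to carry out the whole argument inside the restricted system $G\curvearrowright_T\Omega_\gamma(T)$ and to realize a multiply recurrent point as the unique common point of a nested family of shrinking open sets. The key starting observation is the fixed-point identity $\Omega_\gamma(T)=\Omega_F(G\curvearrowright_T\Omega_\gamma(T))$ recorded before Definition~\ref{def0.10}, which says that \emph{every} point of $\Omega_\gamma(T)$ is multiply nonwandering for the restricted action in the sense of Definition~\ref{def0.7}. Fixing the elements $g_1,\dotsc,g_l$, it therefore suffices to show that an arbitrary nonempty relatively open set $W\subseteq\Omega_\gamma(T)$ contains a $(g_1,\dotsc,g_l)$-multiply recurrent point; density then follows at once.

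For the construction I would fix $\varepsilon_k\downarrow0$ and produce nonempty relatively open sets $V_0\supseteq\overline{V_1}\supseteq V_1\supseteq\overline{V_2}\supseteq\dotsm$ in $\Omega_\gamma(T)$ together with times $t_1,t_2,\dotsc\in R$ with $t_k\to+\infty$, so that $\mathrm{diam}(V_k)<\varepsilon_k$ and $g_i^{t_k}V_k\subseteq V_{k-1}$ for all $1\le i\le l$. Put $V_0=W$. Given $V_{k-1}$, choose any $p\in V_{k-1}$; since $p$ is multiply nonwandering, the return-time set $N_{g_1,\dotsc,g_l}(V_{k-1})$ is a F{\o}lner $\infty$-set (Definition~\ref{def0.5}), hence unbounded in the non-compact ring $R$, so I may select $t_k\in N_{g_1,\dotsc,g_l}(V_{k-1})$ with $t_k\notin R_k$; by Lemma~\ref{lem5.1} this forces $t_k\to+\infty$. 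By definition of the return set, $V_{k-1}\cap g_1^{-t_k}V_{k-1}\cap\dotsm\cap g_l^{-t_k}V_{k-1}$ is nonempty, and using the joint continuity of $T$ (so each $x\mapsto g_i^{t_k}x$ is continuous) I can pick a point $y$ in it and shrink to a relatively open $V_k\ni y$ with $\overline{V_k}\subseteq V_{k-1}$, $\mathrm{diam}(V_k)<\varepsilon_k$, and $g_i^{t_k}V_k\subseteq V_{k-1}$ for every $i$.

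Because $\Omega_\gamma(T)$ is compact, the nested nonempty closed sets $\overline{V_k}$, whose diameters tend to $0$, intersect in a single point $p$, and $p\in V_1\subseteq V_0=W$. For each $k$ we have $p\in V_k$ (since $p\in\overline{V_{k+1}}\subseteq V_k$) and $g_i^{t_k}p\in g_i^{t_k}V_k\subseteq V_{k-1}$, so both $p$ and $g_i^{t_k}p$ lie in $V_{k-1}$; thus $d(g_i^{t_k}p,p)<\varepsilon_{k-1}$ for all $i$ simultaneously, while $t_k\to+\infty$. By Definition~\ref{def5.2} this exhibits $p$ as a $(g_1,\dotsc,g_l)$-multiply recurrent point inside $W$, proving density in $\Omega_\gamma(T)$.

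The step I expect to require the most care is the guarantee that $t_k\to+\infty$: one must argue that a F{\o}lner $\infty$-set in $R$ cannot be contained in any compactum, so that a return time can always be chosen outside the compact set $R_k$ of the fixed summing sequence $\mathcal{R}$. This is where the hypothesis $|R|=\infty$ enters, via the fact that positive upper density of $\{t\in R; mt\in N_{g_1,\dotsc,g_l}(V_{k-1})\}$ over $\mathcal{R}$ is incompatible with boundedness; Lemma~\ref{lem5.1} then converts ``$t_k$ eventually leaves every $R_n$'' into ``$t_k\to+\infty$''. The remaining ingredients---multiple nonwandering of every point of $\Omega_\gamma(T)$ and the continuity used to nest the $V_k$ with the prescribed inclusions---are routine.
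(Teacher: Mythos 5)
Your proposal is correct and follows essentially the same route as the paper's own proof: restrict to $\Omega_\gamma(T)$, use the fixed-point identity $\Omega_\gamma(T)=\Omega_F(G\curvearrowright_T\Omega_\gamma(T))$ to obtain return times for every nonempty relatively open set, nest shrinking open sets with $g_i^{t_k}V_k\subseteq V_{k-1}$ and $t_k\notin R_k$, and extract the multiply recurrent point by compactness together with Lemma~\ref{lem5.1}. The one step you flag as delicate---that the F{\o}lner $\infty$-property lets one choose each return time outside the compact set $R_k$---is precisely what the paper subsumes under ``regional multiple recurrence relative to $(R_n)_{1}^\infty$'' and likewise invokes without further justification.
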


\begin{proof}
We consider the subsystem $G\curvearrowright_T\Omega_\gamma(T)$ instead of $G\curvearrowright_TX$. Let $p\in \Omega_\gamma(T)$ be any point and $\varepsilon>0$ be an arbitrary number, and let $g_1, \dotsc, g_l$ be any given $l$ elements in $G$ where $l\in \mathbb{\mathbb{N}}$. It is required to prove that in the relative $\varepsilon$-ball $U_0=B_\varepsilon(p)$ around $p$ in $\Omega_\gamma(T)$ there can be found a $(g_1,\dotsc,g_l)$-recurrent point for $G\curvearrowright_T\Omega_\gamma(T)$.

Because of the regional multiple recurrence of $T$ on $\Omega_\gamma(T)$, there can be found some time $\tau_1\in R, \tau_1\not\in R_1$, simply written as $\tau_1>R_1$, so that
\begin{gather*}
U_0\cap g_1^{-\tau_1}U_0\cap\dotsm\cap g_l^{-\tau_1}U_0\not=\varnothing.
\end{gather*}
Since the intersection of $l+1$ open sets is still an open set, there can be found a point and a number, say $p_1\in U_0$ and $\varepsilon_1>0$, such that
\begin{gather*}
B_{\varepsilon_1}(p_1)\subset U_0\cap g_1^{-\tau_1}U_0\cap\dotsm\cap g_l^{-\tau_1}U_0.
\end{gather*}
We simply write $U_1=B_{\varepsilon_1/2}(p_1)$. By virtue of the same regional multiple recurrence there can be found an element $\tau_2>R_2$ in $R$ with
\begin{gather*}
U_1\cap g_1^{-\tau_2}U_1\cap\dotsm\cap g_l^{-\tau_2}U_1\not=\varnothing
\end{gather*}
and there can be found a point $p_2\in U_1$ and a number $\varepsilon_2>0$ such that
\begin{gather*}
B_{\varepsilon_2}(p_2)\subset U_1\cap g_1^{-\tau_2}U_1\cap\dotsm\cap g_l^{-\tau_2}U_1.
\end{gather*}
Obviously, $\varepsilon_2\le\varepsilon_1/2$. We set $U_2=B_{\varepsilon_2/2}(p_2)$. Next there can be found a point $p_3$ and a number $\varepsilon_3>0$ such that
\begin{gather*}
B_{\varepsilon_3}(p_3)\subset U_2\cap g_1^{-\tau_3}U_2\cap\dotsm\cap g_l^{-\tau_3}U_2,
\end{gather*}
where $\tau_3\in R, \tau_3>R_3$ and $\varepsilon_3\le\varepsilon_2/2$.

Continuing this process without end and noting that $\overline{U}_n\subseteq U_{n-1}$ for $n=1,2,\dotsc$ and, besides, that $\textrm{diam}(\overline{U}_n)<\varepsilon_n\le\varepsilon/2^{n-1}$, we obtain because of the compactness of the multiple Birkhoff center $\Omega_\gamma(T)$ a point $q$ as the intersection of the sets $U_n$:
$\{q\}=\bigcap_{n=1}^\infty U_n$.
Since $g_i^{\tau_{n+1}}q\in U_n$ for $n=1,2,\dotsc$ and $i=1, \dotsc, l$,
we can see that
$g_i^{\tau_n}q\to q$ as $n\to+\infty$
simultaneously for $i=1,\dotsc,l$. Clearly $\tau_n\to+\infty$ by Lemma~\ref{lem5.1}.

This completes the proof of Lemma~\ref{lem5.3}.
\end{proof}

We note that this result is still valid if, instead of $\Omega_\gamma(T)$, any compact set be taken which possesses the property of regional multiple recurrence relative to $(R_n)_{1}^\infty$.

\begin{Lem}\label{lem5.4}
Given any $g_1, \dotsc, g_l\in G$, the $(g_1,\dotsc,g_l)$-multiple recurrent points of $G\curvearrowright_TX$ form a residual subset of $\Omega_\gamma(T)$.
\end{Lem}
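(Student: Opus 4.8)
The plan is to realize the set of $(g_1,\dots,g_l)$-multiply recurrent points of $G\curvearrowright_T\Omega_\gamma(T)$ as a countable intersection of open dense subsets of the compact metric space $\Omega_\gamma(T)$, and then invoke the Baire category theorem. Concretely, for each pair of positive integers $n,k$ I would introduce the set
\[
V_{n,k}=\left\{x\in\Omega_\gamma(T)\colon \exists\,\tau\in R\setminus R_n \text{ with } d(g_i^\tau x,x)<\tfrac{1}{k}\ \text{for all }1\le i\le l\right\},
\]
where $\mathcal{R}=\{R_n\}_1^\infty$ is the fixed summing sequence of $\S\ref{sec5}$. Since the $G$-action is continuous, for each fixed $\tau\in R$ the map $x\mapsto(g_1^\tau x,\dots,g_l^\tau x)$ is continuous, so $\{x\colon d(g_i^\tau x,x)<1/k,\ 1\le i\le l\}$ is open; being the union of these open sets over $\tau\in R\setminus R_n$, each $V_{n,k}$ is open in $\Omega_\gamma(T)$.

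Next I would verify that $\bigcap_{n,k\ge1}V_{n,k}$ is \emph{exactly} the set of $(g_1,\dots,g_l)$-multiply recurrent points. If $x$ lies in this intersection, then for each $m$ one may choose $\tau_m\in R\setminus R_m$ with $d(g_i^{\tau_m}x,x)<1/m$ for all $i$; because $R_n\subseteq R_m$ whenever $n\le m$ (by condition (2) of a summing sequence), membership $\tau_m\notin R_m$ forces $\tau_m\notin R_n$ for every $n\le m$, so $\tau_m\xrightarrow{\mathcal{R}}+\infty$ and hence $\tau_m\to+\infty$ by Lemma~\ref{lem5.1}, while $g_i^{\tau_m}x\to x$ simultaneously in $i$; thus $x$ is multiply recurrent. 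Conversely, if $x$ is multiply recurrent with witnessing times $t_j\to+\infty$, then for any $n,k$ some $t_j$ escapes the compactum $R_n$ and satisfies $d(g_i^{t_j}x,x)<1/k$ for all $i$, so $x\in V_{n,k}$. Hence the recurrent set coincides with the $G_\delta$ set $\bigcap_{n,k}V_{n,k}$.

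The density of each $V_{n,k}$ is where the substantive dynamical input enters, and it is supplied for free by the previous lemma: Lemma~\ref{lem5.3} asserts that the $(g_1,\dots,g_l)$-multiply recurrent points are everywhere dense in $\Omega_\gamma(T)$. Since that recurrent set is contained in every $V_{n,k}$, each $V_{n,k}$ contains a dense set and is therefore itself dense. As $\Omega_\gamma(T)$ is a compact metric space, it is a Baire space, so the countable intersection $\bigcap_{n,k}V_{n,k}$ of dense open sets is residual, which is precisely the assertion of Lemma~\ref{lem5.4}.

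I expect the only delicate point to be the bookkeeping around the notion $\tau\to+\infty$, which carries no order information in the general ring $R$. The summing-sequence characterization of Lemma~\ref{lem5.1} is exactly what licenses the diagonal choice $\tau_m\in R\setminus R_m$ to escape every compactum, and the nesting $R_n\subseteq R_m$ is what makes that single diagonal sequence simultaneously admissible for all earlier indices $n$. Everything else is the standard Baire-category packaging, with the genuine dynamical content already discharged by Lemma~\ref{lem5.3}.
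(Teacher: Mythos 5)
Your proof is correct and is essentially the paper's own argument in dual form: the paper covers the complement of the recurrent set by the closed, nowhere dense sets $X_n=\{p\colon\max_{1\le i\le l}d(p,g_i^tp)\ge\varepsilon_n\ \forall t>R_n\}$, while you intersect their complements, the open dense sets $V_{n,k}$, and both proofs draw the density/nowhere-density from Lemma~\ref{lem5.3} and finish with Baire category in the compact metric space $\Omega_\gamma(T)$. The only cosmetic difference is your two-index family $(n,k)$ versus the paper's single index coupling $\varepsilon_n$ with $R_n$, which changes nothing of substance.
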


\begin{proof}
Let $\{\varepsilon_n\}$ be a sequence of positive numbers with $\varepsilon_n\downarrow0$ as $n\to+\infty$. Define sets
$$
X_n=\left\{p\in X\,\big{|}\,\max_{1\le i\le l}d(p,g_i^tp)\ge\varepsilon_n\ \forall t>R_n\right\};
$$
where $X_n$ may be an empty set. Obviously, all the points $p\in X_n$ are not $(g_1,\dotsc,g_l)$-recurrent for $T$, and it is easy to check that every point which is not $(g_1,\dotsc,g_l)$-recurrent for $T$ lies in some $X_n$.
Clearly, $X_n$ is closed by the continuity of $T(tg,x)$ with respect to $(t,g,x)\in R\times G\times X$. Furthermore, $X_n$ is nowhere dense in $\Omega_\gamma(T)$ by Lemma~\ref{lem5.3}. This implies that $\Omega_\gamma(T)-\bigcup_{n=1}^\infty X_n$ is a residual subset of $\Omega_\gamma(T)$ and the proof of Lemma~\ref{lem5.4} is thus completed.
\end{proof}

The full probability property of multiply recurrent points of $G\curvearrowright_TX$ comes from the following theorem by using a measure-theoretic version of the proof of Lemma~\ref{lem5.3}.

\begin{Thm}[C.~Carath\'{e}odory~\cite{Car} for $G=\mathbb{R}$]\label{thm5.5}
Given any $l$ elements $g_1, \dotsc, g_l\in G$, the set $\mathcal{R}_{g_1,\dotsc,g_l}(T)$ of all the $(g_1,\dotsc,g_l)$-multiple recurrent points of $G\curvearrowright_TX$ is a full probability $G_\delta$ subset of $X$.
\end{Thm}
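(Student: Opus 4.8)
The plan is to prove the two assertions — that $\mathcal{R}_{g_1,\dotsc,g_l}(T)$ is $G_\delta$ and that it is of full probability — separately; the first is purely topological, while the second is where the measure-theoretic multiple recurrence theorem does the work.

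For the $G_\delta$ part I would fix a sequence $\varepsilon_m\downarrow0$ and, for integers $m,n\ge1$, introduce the sets
\[
W_{m,n}=\Big\{p\in X\colon \exists\, s\in R\setminus R_n\text{ with }\max_{1\le i\le l}d(p,g_i^sp)<\varepsilon_m\Big\}.
\]
Each $W_{m,n}$ is open because $(s,x)\mapsto g_i^sx$ is continuous. Using the summing sequence $\mathcal{R}=\{R_n\}_1^\infty$ and Lemma~\ref{lem5.1}, which identifies $s\to+\infty$ with $s$ eventually leaving every $R_n$, one checks that $p\in\bigcap_{m,n}W_{m,n}$ exactly when for each $m$ there is an $s_m\notin R_m$ with $\max_i d(p,g_i^{s_m}p)<\varepsilon_m$, i.e. exactly when $p$ is $(g_1,\dotsc,g_l)$-multiply recurrent. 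Hence $\mathcal{R}_{g_1,\dotsc,g_l}(T)=\bigcap_{m,n}W_{m,n}$ is $G_\delta$.

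For full probability, I would fix an arbitrary $T$-invariant Borel probability $\mu$ and show that each closed set $Y_{m,n}:=X\setminus W_{m,n}=\{p\colon \max_i d(p,g_i^sp)\ge\varepsilon_m\ \forall s\notin R_n\}$ is $\mu$-null, whence $\mathcal{R}_{g_1,\dotsc,g_l}(T)$, being the complement of the countable union $\bigcup_{m,n}Y_{m,n}$, has full $\mu$-measure. Arguing by contradiction, suppose $\mu(Y_{m,n})>0$; covering $X$ by finitely many balls of diameter $<\varepsilon_m$ I can pick one, $B$, with $\mu(A)>0$ for $A:=B\cap Y_{m,n}$. Applying the multiple recurrence theorem to $A$ — Theorem~\ref{thm0.1} in the irreducible case, Theorem~\ref{thm0.3} in general — produces, over a suitable F{\o}lner sequence, a set of return times of positive (lower, resp.~upper) density along which $\mu(A\cap g_1^{-s}A\cap\dotsm\cap g_l^{-s}A)>0$. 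Since $R$ is noncompact every compactum has density $0$ along a F{\o}lner sequence with $|F_n|\to\infty$, so this return set is not contained in $R_n$ and I can select a return time $s\notin R_n$ with the displayed intersection positive, hence nonempty. Any $p$ in it lies in $A\subseteq B$ together with all of $g_1^sp,\dotsc,g_l^sp$, so $\max_i d(p,g_i^sp)<\varepsilon_m$ with $s\notin R_n$, contradicting $p\in Y_{m,n}$. As $\mu$ was arbitrary, this gives full probability. This whole step is the measure-theoretic analogue of the nested-ball construction in Lemma~\ref{lem5.3}, with positive-measure returns replacing nonempty open intersections.

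The main obstacle will be extracting an honest large return time $s\notin R_n$ from the merely density-theoretic conclusion of the recurrence theorem. In the irreducible case this is immediate from Theorem~\ref{thm0.1}. In general Theorem~\ref{thm0.3} only yields returns at times of the form $s=m^\prime t$ with $1\le m^\prime\le M$, so I must ensure these still escape every $R_n$; I expect to handle this exactly as in the proof of Proposition~\ref{prop3.6}, reducing to the case that $G$ is irreducible relative to $F=\{g_1,\dotsc,g_l\}$ (so $m^\prime g_i\ne\e$) and observing that $\{t\colon m^\prime t\in R_n\}$ is of density $0$, so that the positive-density return set is not absorbed by it.
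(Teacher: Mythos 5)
Your proposal is correct, and its two halves relate to the paper's proof in different ways. The $G_\delta$ half is essentially the paper's own argument: the paper deduces it from the closed sets $X_n$ in the proof of Lemma~\ref{lem5.4}, which are exactly your sets $Y_{m,n}$ taken along the diagonal $m=n$. The full-probability half, however, takes a genuinely different route. The paper fixes an \emph{ergodic} measure $\mu$, supposes $\mu(\mathcal{R}_{g_1,\dotsc,g_l}(T))<1$ so that by invariance and ergodicity the complement $Y$ has measure one, and then runs a measure-theoretic version of the nested-ball construction of Lemma~\ref{lem5.3} (whence the attribution to Carath\'eodory): using regularity of $\mu$ and infinitely many applications of Theorem~\ref{thm0.3}, it builds closed sets $Y_0\supseteq Y_1\supseteq\dotsm$ of positive measure with $\operatorname{diam}(Y_n)\le 2^{-n}$, $Y_n\subseteq Y_{n-1}\cap g_1^{-\tau_n}Y_{n-1}\cap\dotsm\cap g_l^{-\tau_n}Y_{n-1}$ and $\tau_n\notin R_n$, so that the single point $q=\bigcap_nY_n$ is multiply recurrent yet lies in $Y$, a contradiction. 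You instead let the $G_\delta$ decomposition do double duty: it reduces full probability to showing that each closed set $Y_{m,n}$ is null for \emph{every} invariant measure, and one application of the recurrence theorem to $A=B\cap Y_{m,n}$, with $B$ a ball of diameter $<\varepsilon_m$ charged by $\mu$, accomplishes this (the classical Poincar\'e localization trick). Your route needs no ergodicity (hence no ergodic decomposition to pass to general invariant measures), no regularity of $\mu$, and no infinite nested construction; the paper's route, in exchange, exhibits an explicit recurrent point and parallels Lemma~\ref{lem5.3} exactly.

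Two shared fine points are worth recording. First, both you and the paper use the recurrence theorems in the form $\mu(A\cap g_1^{-s}A\cap\dotsm\cap g_l^{-s}A)>0$, with the untranslated $A$ included; this follows from Theorems~\ref{thm0.1} and~\ref{thm0.3} as stated by adjoining the zero element of $G$ to the sample set. Second, the step you call the main obstacle---converting the density-form conclusion into an actual return time outside the compactum $R_n$---is passed over silently in the paper (it simply asserts the existence of $\tau_1\notin R_1$), so your explicit treatment is if anything more careful. Note, though, that in the general case your claim that $\{t\colon m't\in R_n\}$ has density zero requires multiplication by $m'$ to be non-degenerate on $R$: if $m'\cdot 1_R=0$, which can happen for syndetic rings of positive characteristic, then $m't\equiv0$ and that value of $m'$ yields no nonzero return time at all. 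This caveat afflicts the paper's own proof equally, so it does not put your argument at a disadvantage relative to the one it replaces.
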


\begin{proof}
By the proof of Lemma~\ref{lem5.4}, it is easy to see that $\mathcal{R}_{g_1,\dotsc,g_l}(T)$ is a $G_\delta$-subset of $X$. Without loss of generality, let $\mu$ be an arbitrarily given ergodic Borel probability measure of $G\curvearrowright_TX$ such that $\textrm{supp}(\mu)$ does not consist of a fixed point of $T$; otherwise $\mu(\mathcal{R}_{g_1,\dotsc,g_l}(T))=1$. By contradiction, let $\mu(\mathcal{R}_{g_1,\dotsc,g_l}(T))<1$; then since $\mathcal{R}_{g_1,\dotsc,g_l}(T)$ is $T$-invariant, it follows at once that
\begin{gather*}Y=X-\mathcal{R}_{g_1,\dotsc,g_l}(T)\end{gather*}
is an $T$-invariant Borel set of $\mu$-measure $1$.
We now consider $G\curvearrowright_T(Y,\mu)$ instead of $G\curvearrowright_TX$. We shall prove that there is a $(g_1,\dotsc,g_l)$-recurrent point of $G\curvearrowright_TX$ that lies in $Y$. This is a contradiction.

Since $Y$ is a Borel subset of $X$ and $\mu$ is regular, by Theorem~\ref{thm0.3} it follows that
there can be found a closed subset $Y_0$ of $X$ with
$$
Y_0\subset Y,\quad 0<\mu(Y_0)\le\frac{1}{2}\quad \textrm{and} \quad \textrm{diam}(Y_0)\le 1
$$
and there is some element $\tau_1>R_1$ in $R$ (i.e. $\tau_1\in R, \tau_1\not\in R_1$) so that
$$
\mu\big{(}Y_0\cap g_1^{-\tau_1}Y_0\cap\dotsm\cap g_l^{-\tau_1}Y_0\big{)}>0.
$$
Then there can be found a closed subset $Y_1$ of $X$ with
$$
Y_1\subset Y_0\cap g_1^{-\tau_1}Y_0\cap\dotsm\cap g_l^{-\tau_1}Y_0,\quad 0<\mu(Y_1)\le\frac{1}{2^2}, \quad \textrm{diam}(Y_1)\le\frac{1}{2},
$$
Next by Theorem~\ref{thm0.3} again, there can be found an element $\tau_2>R_2$ in $R$ (i.e. $\tau_2\in R, \tau_2\not\in R_2$) with
$$
\mu\big{(}Y_1\cap g_1^{-\tau_2}Y_1\cap\dotsm\cap g_l^{-\tau_2}Y_1\big{)}>0.
$$
Further there can be found a closed subset $Y_2$ of $X$ with
$$
Y_2\subset Y_1\cap g_1^{-\tau_2}Y_1\cap\dotsm\cap g_l^{-\tau_2}Y_1,\quad 0<\mu(Y_2)\le\frac{1}{2^3}, \quad \textrm{diam}(Y_2)\le\frac{1}{2^2},
$$
and an element $\tau_3>R_3$ in $R$ with
$$
\mu\big{(}Y_2\cap g_1^{-\tau_3}Y_2\cap\dotsm\cap g_l^{-\tau_3}Y_2\big{)}>0.
$$
Continuing this process without end and noting that $Y_n\subseteq Y_{n-1}$ for $n=1,2,\dotsc$ and, besides, that $\textrm{diam}(Y_n)\le1/2^n$, we obtain because of the compactness of the space $X$ a point $q\in Y$ as the intersection of the sets $Y_n$:
$$
\{q\}=\bigcap_{n=1}^\infty Y_n.
$$
Since
$$
g_i^{\tau_{n}}q\in Y_{n-1}\quad \textrm{for }n=1,2,\dotsc\textrm{ and }i=1, \dotsc, l,
$$
we can see that $\tau_n\to+\infty$ and
$$
g_i^{\tau_n}q\to q\quad \textrm{as }n\to+\infty
$$
simultaneously for $i=1,\dotsc,l$. We thus arrive at a contradiction.

This completes the proof of Theorem~\ref{thm5.5}.
\end{proof}

Now combining Theorem~\ref{thm5.5} with Lemma~\ref{lem5.4} follows at once the following result:

\begin{prop}\label{prop5.6}
Given any $g_1,\dotsc,g_l\in G$, the $(g_1,\dotsc,g_l)$-multiple recurrent points of $G\curvearrowright_TX$ form a residual and full probability set in the multiple Birkhoff center $\Omega_\gamma(T)$.
\end{prop}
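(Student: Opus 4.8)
The plan is to read off both assertions from the two preceding results, since on the $T$-invariant closed set $\Omega_\gamma(T)$ the action is simply the restriction of $G\curvearrowright_TX$ and orbits starting in $\Omega_\gamma(T)$ never leave it; hence a point $p\in\Omega_\gamma(T)$ is $(g_1,\dotsc,g_l)$-multiply recurrent for the subsystem $G\curvearrowright_T\Omega_\gamma(T)$ exactly when it is so for $G\curvearrowright_TX$, and $\mathcal{R}_{g_1,\dotsc,g_l}(T)\cap\Omega_\gamma(T)$ is precisely the recurrent set of the restricted system.

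First I would dispose of residuality: this is nothing but Lemma~\ref{lem5.4}, which already places the $(g_1,\dotsc,g_l)$-multiply recurrent points as a residual subset of $\Omega_\gamma(T)$. No extra work is needed here.

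Next, for the full probability claim I would invoke that $\Omega_\gamma(T)$ is itself of full probability (Definition~\ref{def0.10} together with Proposition~\ref{prop0.9}): every ergodic Borel probability measure $\mu$ of $G\curvearrowright_TX$ satisfies $\mu(\Omega_\gamma(T))=1$, so each such $\mu$ is carried by $\Omega_\gamma(T)$ and serves as an invariant measure of the restricted system. Theorem~\ref{thm5.5} gives $\mu(\mathcal{R}_{g_1,\dotsc,g_l}(T))=1$ for every ergodic $\mu$; combining the two, $\mu(\mathcal{R}_{g_1,\dotsc,g_l}(T)\cap\Omega_\gamma(T))=1$ for every ergodic $\mu$, which is exactly the assertion that the recurrent points form a full probability subset of $\Omega_\gamma(T)$.

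The only delicate point\,---\,and the nearest thing to an obstacle in an otherwise immediate argument\,---\,is the passage from measures on $X$ to measures on the subsystem: I would need the ergodic measures of $G\curvearrowright_T\Omega_\gamma(T)$ to coincide with those ergodic measures of $G\curvearrowright_TX$ carried by $\Omega_\gamma(T)$, so that no ergodic measure is lost upon restriction. This is guaranteed precisely by the full probability property of $\Omega_\gamma(T)$ from Proposition~\ref{prop0.9}. Granting this, the residual set of Lemma~\ref{lem5.4} and the full probability set just obtained together establish Proposition~\ref{prop5.6}.
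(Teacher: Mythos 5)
Your proposal is correct and follows essentially the same route as the paper: the paper's proof of Proposition~\ref{prop5.6} is exactly the combination of Lemma~\ref{lem5.4} (residuality in $\Omega_\gamma(T)$) with Theorem~\ref{thm5.5} (full probability of $\mathcal{R}_{g_1,\dotsc,g_l}(T)$), together with the previously noted fact that $\Omega_\gamma(T)$ itself has full probability by Proposition~\ref{prop0.9}. Your extra care about ergodic measures of the restricted system matching those of $G\curvearrowright_TX$ supported on $\Omega_\gamma(T)$ is a fine observation but is the same implicit step the paper takes.
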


\begin{cor}\label{cor5.7}
If $G$ is countable, then the multiply recurrent points of $G\curvearrowright_TX$ form a $G_\delta$ set of full probability, which is residual in $\Omega_\gamma(T)$.
\end{cor}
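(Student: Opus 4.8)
The plan is to combine the two structural results about multiply recurrent points that have just been established, namely Theorem~\ref{thm5.5} (the set of $(g_1,\dotsc,g_l)$-recurrent points is a full probability $G_\delta$) and Lemma~\ref{lem5.4} (that same set is residual in $\Omega_\gamma(T)$), with the hypothesis that $G$ is countable. The key observation is that a point $p\in X$ is \emph{multiply recurrent} in the sense of Def.~\ref{def5.2} precisely when it is $(g_1,\dotsc,g_l)$-recurrent \emph{for every} finite tuple $g_1,\dotsc,g_l\in G$ and every $l\ge2$; since $G$ is countable, the collection of all such finite tuples is itself countable, so the set of multiply recurrent points is a countable intersection of the individual sets $\mathcal{R}_{g_1,\dotsc,g_l}(T)$.

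First I would make the indexing explicit: let $\mathcal{C}$ denote the (countable, by countability of $G$) family of all finite tuples $(g_1,\dotsc,g_l)$ with $l\ge2$ and $g_i\in G$, and write
\begin{gather*}
\mathcal{R}(T)=\bigcap_{(g_1,\dotsc,g_l)\in\mathcal{C}}\mathcal{R}_{g_1,\dotsc,g_l}(T),
\end{gather*}
the set of all multiply recurrent points of $G\curvearrowright_TX$. Then I would invoke Theorem~\ref{thm5.5} to record that each $\mathcal{R}_{g_1,\dotsc,g_l}(T)$ is a $G_\delta$-subset of $X$ of full probability. The $G_\delta$ conclusion is immediate: a countable intersection of $G_\delta$-sets is again $G_\delta$. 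The full-probability conclusion follows because for \emph{any} invariant Borel probability measure $\mu$ each factor has $\mu$-measure one, and a countable intersection of sets of $\mu$-measure one again has $\mu$-measure one; since this holds for every such $\mu$, the intersection $\mathcal{R}(T)$ is of full probability.

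Next I would obtain the residuality claim from Lemma~\ref{lem5.4}: each $\mathcal{R}_{g_1,\dotsc,g_l}(T)$ is residual in $\Omega_\gamma(T)$, and a countable intersection of residual subsets of a fixed space is residual (the Baire category theorem applies because $\Omega_\gamma(T)$ is a compact, hence complete, metric space). Therefore $\mathcal{R}(T)\cap\Omega_\gamma(T)$ is residual in $\Omega_\gamma(T)$. Combining the two strands gives that the multiply recurrent points form a $G_\delta$ set of full probability which is residual in the multiple Birkhoff center, as asserted.

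I do not expect any serious obstacle here; the corollary is essentially a countability-driven bookkeeping step that reduces the ``for all finite tuples'' quantifier in Def.~\ref{def5.2} to a countable intersection, after which the already-proved Theorem~\ref{thm5.5} and Lemma~\ref{lem5.4} do all the real work. The only point requiring a line of care is the passage from ``each factor has full probability'' to ``the intersection has full probability,'' which must be stated for an arbitrary invariant measure rather than a single fixed one, so that the resulting set is genuinely of full probability in the sense used throughout the paper.
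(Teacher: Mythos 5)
Your proposal is correct and follows essentially the same route as the paper's own (much terser) proof: writing the set of multiply recurrent points as the countable intersection $\bigcap_{l}\bigcap_{(g_1,\dotsc,g_l)\in G^l}\mathcal{R}_{g_1,\dotsc,g_l}(T)$ and then citing Theorem~\ref{thm5.5} for the $G_\delta$ and full-probability properties and Lemma~\ref{lem5.4} for residuality in $\Omega_\gamma(T)$. Your explicit remarks --- that full probability must be checked against an arbitrary invariant measure, and that Baire's theorem applies since $\Omega_\gamma(T)$ is compact metric --- are exactly the bookkeeping the paper leaves implicit.
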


\begin{proof}
Since $G$ is countable, hence $\bigcap_{l\in\mathbb{N}}\bigcap_{(g_1,\dotsc,g_l)\in G^l}\mathcal{R}_{g_1,\dotsc,g_l}(T)$ is $G_\delta$ in $X$ and residual in $\Omega_\gamma(T)$ and of full probability. This proves the corollary.
\end{proof}

Particularly under the discrete topology, $\mathbb{Z}^d$ is a countable Noetherian $\mathbb{Z}$-module.
Next we will consider a $\mathbb{Z}$-acting dynamical system. Let $T\colon X\rightarrow X$ be a homeomorphism of the compact metric space $X$, which has the multiple Birkhoff center $M_\gamma(T)$ and $([-n,\dotsc,n])_{1}^\infty$ is a summing sequence in $(\mathbb{Z},+)$.

In 1994~\cite{Gla94}, by using the topological ``ergodic decomposition'' theory developed in \cite{EGS, Vee} E.~Glasner showed that if $(X,T)$ is \textit{minimal and topologically weakly mixing}, then for any $l\ge1$ the $(1,2,\dotsc,l)$-recurrent points of $T$ form a dense $G_\delta$ subset in $X$. Whenever $(X,T)$ is \textit{minimal}, then it is also well known that there always exists a dense $G_\delta$ set of points which are $(1,2,\dotsc,l)$-recurrent for any $l\ge1$. This result appears scattered in many literature; see, e.g., \cite[Theorem~2.5]{HKM}. Moreover, in \cite[Theorem~3.12]{KLOY}, D.~Kwietniak et al. proved that the set of the $(1,2,\dotsc,l)$-recurrent points of $T$ is of full probability.

The $\mathbb{Z}$-time version of Corollary~\ref{cor5.7} can be stated as follows:

\begin{prop}\label{prop5.8}
Let $T\colon X\rightarrow X$ be a homeomorphism of the compact metric space $X$, which has the multiple Birkhoff center $M_\gamma(T)$. Then the set of all the multi-recurrent points of the single homeomorphism $T$ is residual and of full probability in $M_\gamma(T)$.
\end{prop}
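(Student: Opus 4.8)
The plan is to treat the single homeomorphism $T$ as the countable $\mathbb{Z}$-action $n\mapsto T^n$ and then to exhibit the set of its multi-recurrent points as a countable intersection of the sets produced by Proposition~\ref{prop5.6}, in the same spirit as the proof of Corollary~\ref{cor5.7}. First I would record that $(\mathbb{Z},+)$ over $\mathbb{Z}$ is a countable irreducible lcscN $\mathbb{Z}$-module (as noted in the Introduction) and that, under the identification of $T$ with this $\mathbb{Z}$-action, the multiple Birkhoff center $M_\gamma(T)$ is exactly $\Omega_\gamma(T)$ of Definition~\ref{def0.10}. Under this identification a point $p$ is multi-recurrent for the single homeomorphism $T$ precisely when it is $(1,2,\dots,l)$-recurrent in the sense of Definition~\ref{def5.2} for every $l\in\mathbb{N}$; that is, when
\begin{gather*}
p\in\bigcap_{l\in\mathbb{N}}\mathcal{R}_{1,2,\dots,l}(T).
\end{gather*}
The crucial observation is that the relevant generating tuples $(g_1,\dots,g_l)=(1,2,\dots,l)$ are indexed by the countable set $\mathbb{N}$, so that the multi-recurrent set is already a \emph{countable} intersection, regardless of the cardinality of the full collection of tuples in $\mathbb{Z}^l$.

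Next I would apply Proposition~\ref{prop5.6} with $G=\mathbb{Z}$ to each tuple $(1,2,\dots,l)$ separately: this yields that every $\mathcal{R}_{1,2,\dots,l}(T)$ is simultaneously residual and of full probability in $M_\gamma(T)$. It then remains to verify that both properties persist under the countable intersection displayed above. Residuality is clear, since a countable intersection of residual subsets of a fixed space is residual (the union of the countably many meager complements is meager). For full probability I would note that any $T$-invariant Borel probability measure $\mu$ supported on $M_\gamma(T)$ gives each $\mathcal{R}_{1,2,\dots,l}(T)$ measure $1$, hence gives their countable intersection measure $1$ as well; invoking the ergodic decomposition if necessary, this establishes full probability of the intersection. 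Combining these two observations concludes the proof.

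I do not anticipate a substantial obstacle, since the argument is essentially the countable-intersection bookkeeping of Corollary~\ref{cor5.7} specialized to the countable module $G=\mathbb{Z}$. The one step deserving care is the reduction in the first paragraph: I must confirm that multi-recurrence of the single homeomorphism is faithfully captured by the countable family $\{(1,2,\dots,l)\}_{l\in\mathbb{N}}$ of $\mathbb{Z}$-module tuples, so that Proposition~\ref{prop5.6} may legitimately be applied term by term before passing to the limit in $l$.
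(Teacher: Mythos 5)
Your overall strategy (pass to the countable $\mathbb{Z}$-action and intersect countably many of the sets furnished by Proposition~\ref{prop5.6}, as in Corollary~\ref{cor5.7}) is the right one, but the reduction in your first paragraph is a genuine gap. The paper's notion of multi-recurrence for the single homeomorphism is Definition~\ref{def5.2} applied to the module $G=\mathbb{Z}$, and --- as the remark immediately following Proposition~\ref{prop5.8} stresses --- the sample elements are \emph{not} required to be positive. Hence the multi-recurrent set is $\bigcap_{l}\bigcap_{(g_1,\dotsc,g_l)\in\mathbb{Z}^l}\mathcal{R}_{g_1,\dotsc,g_l}(T)$, and tuples with mixed signs, such as $(1,-1)$, must be handled. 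Your family $\{(1,2,\dotsc,l)\}_{l\in\mathbb{N}}$ does not capture these: $(1,-1)$-recurrence demands one sequence $|t_n|\to\infty$ along which $T^{t_n}p\to p$ \emph{and} $T^{-t_n}p\to p$ simultaneously, and this does not follow from recurrence along all tuples of positive integers. It can genuinely fail: in the full shift $(\{0,1\}^{\mathbb{Z}},\sigma)$ take $p$ with $p_n=0$ for $n<0$ and forward part $w=\lim w^{(k)}$ built recursively by $w^{(0)}=1$, $w^{(k+1)}=(w^{(k)}0^{b_k})^{k+1}w^{(k)}$ with $b_k\uparrow\infty$; writing $d_k=|w^{(k)}|+b_k$, the central word $0^m w_{[0,m]}=p_{[-m,m]}$ occurs at the positions $id_k-m$ for $1\le i\le l$ once $k$ is large, so $p$ is $(1,\dotsc,l)$-recurrent for every $l$, yet $\sigma^{-n}p$ converges to the all-zero fixed point, so $p$ is not $(1,-1)$-recurrent. (Your tuples do yield all-positive tuples, and all-negative ones by replacing $t_n$ with $-t_n$, but not mixed ones.)

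The repair is immediate, and it is exactly what the paper does: your worry about ``the cardinality of the full collection of tuples in $\mathbb{Z}^l$'' is unfounded, since $\mathbb{Z}$ is countable and hence so is $\bigcup_{l\in\mathbb{N}}\mathbb{Z}^l$. So intersect over \emph{all} tuples $(g_1,\dotsc,g_l)\in\mathbb{Z}^l$, $l\in\mathbb{N}$: each $\mathcal{R}_{g_1,\dotsc,g_l}(T)$ is residual in $M_\gamma(T)$ by Lemma~\ref{lem5.4} and of full probability by Theorem~\ref{thm5.5} (this is Proposition~\ref{prop5.6}), and a countable intersection preserves both properties --- your closing argument for this last step is fine. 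That is precisely the proof of Corollary~\ref{cor5.7} specialized to the countable module $G=\mathbb{Z}$, which is how the paper obtains Proposition~\ref{prop5.8}.
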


Here for the multiple recurrence, our sample times $t_1,\dotsc, t_l\in\mathbb{Z}$ are not necessarily to be positive.

We note that since there exists a dynamical system on the $2$-dimensional torus $\mathbb{T}^2$ that is uniquely ergodic with the measure center $o=(0,0)$ and has the multiple Birkhoff center $\mathbb{T}^2$ (cf.~\cite[Example~6.16]{NS}), it is complementary between the topological structure and the full probability of the set of all the multi-recurrent points. Propositions~\ref{prop5.6} and \ref{prop5.8} show that for any topological dynamical system $G\curvearrowright_TX$, the multiply recurrent motions are by no means rare from both the topological viewpoint and the point of view of measure theory.
\section*{\textbf{Acknowledgments}}%
This work was partly supported by National Natural Science Foundation of China grant $\#$11271183 and PAPD of Jiangsu Higher Education Institutions.



\end{document}